\newcommand{\R}{\mathbf{R}}
\newcommand{\C}{\mathbf{C}}
\newcommand{\Z}{\mathbf{Z}}
\newcommand{\K}{\mathbf{K}}
\newcommand{\Prems}{\mathbb{P}}
\newcommand{\Proj}{\mathbf{P}}
\newcommand{\N}{\mathbf{N}}
\newcommand{\Q}{\mathbf{Q}}
\newcommand{\F}{\mathbf{F}}
\newcommand{\G}{\mathbf{G}}
\newcommand{\T}{\mathbf{T}}
\newcommand{\base}{\mathbf{b}}
\newcommand{\cone}{\text{C}_{\text{eff}}(X)}
\DeclareMathOperator{\Spec}{Spec}
\DeclareMathOperator{\Pic}{Pic}
\DeclareMathOperator{\eff}{eff}
\DeclareMathOperator{\id}{Id}
\DeclareMathOperator{\Div}{Div}
\DeclareMathOperator{\proj}{proj}
\DeclareMathOperator{\grp}{grp}
\DeclareMathOperator{\Hom}{Hom}
\DeclareMathOperator{\Ext}{Ext}
\DeclareMathOperator{\Vol}{Vol}
\DeclareMathOperator{\covol}{covol}
\DeclareMathOperator{\Br}{Br}
\DeclareMathOperator{\NS}{NS}
\newcommand{\stackT}{\mathcal{T}}
\newcommand{\affine}{\mathbf{A}}
\newcommand{\sheafL}{\mathcal{L}}
\newcommand{\sheafO}{\mathcal{O}}
\newcommand{\w}{\omega}
\newtheorem{theorem}{Theorem}[section]
\newtheorem{cor}[theorem]{Corollary}
\newtheorem{conjecture}[theorem]{Conjecture}
\newtheorem{lemma}[theorem]{Lemma}
\newtheorem{prop}[theorem]{Proposition}
\theoremstyle{definition}
\newtheorem{definition}[theorem]{Definition}
\newtheorem{convention}[theorem]{Convention}
\newtheorem{example}[theorem]{Example}
\theoremstyle{remark}
\newtheorem{remark}[theorem]{Remark}
\newtheorem*{Notations}{Notations}
\numberwithin{equation}{section}
\begin{document}

% \title[short text for running head]{full title}
\title[Multi-height analysis on toric varieties]{Multi-height analysis of rational points of toric varieties}

%    Only \author and \address are required; other information is
%    optional.  Remove any unused author tags.

%    author one information
% \author[short version for running head]{name for top of paper}
\author{Bongiorno Nicolas}
\address{}
\curraddr{}
\email{nicolas.bongiorno@univ-grenoble-alpes.fr}
\thanks{}

%    \subjclass is required.
\subjclass[2020]{Primary NT11D45, AG14M25 }

\date{}

\dedicatory{}

%    "Communicated by" -- provide editor's name; required.
%\commby{}

%    Abstract is required.
\begin{abstract}
We study the multi-height distribution of rational points of smooth, projective and split toric varieties over $\Q$ using the lift of the number of points to universal torsors.
\end{abstract}

%Then, using toric stacks, we show that the same method can be applied to the multi-height study of $\Z$-Campana points of $(X,D)$ where $D$ is a certain type of $\Q$-effective divisors over $X$.

\maketitle

\section{Introduction}

For a quasi-Fano variety $V$ with infinitely many rational points, one may asymptotically study the finite set of rational points with a bounded height $H$ associated with the anticanonical line bundle $\w_{V}^{-1}$. In \cite{FrankeManinTschinkel1989}, \cite{Batyrev1990} and \cite{peyre_duke}, Batyrev, Franke, Manin, Tschinkel and Peyre gave strong evidence supporting conjectures relating the asymptotic behavior of the number of points of bounded height on open subsets of $V$ to geometric invariants of $V$.

In the case of toric varieties, the asymptotic behavior was studied in \cite{batyrev_anisotrop_tori} and \cite{batyrev_toric_varieties} using harmonic analysis on adelic groups. In \cite{salberger_torsor}, Salberger was the first to explicitly use universal torsors to study points of bounded height. In particular, he was able to give a new proof of the theorem of Batyrev and Tschinkel for smooth projective split toric varieties over $\Q$. His work highlighted how the constant in the conjectural asymptotic formula is related to the product of local densities on universal torsors. This was later studied in a more general setting by Peyre in \cite{peyre_zeta_height_function} and \cite{methode_du_cercle_peyre}.

More recently in \cite{Peyre_beyond_height}, Peyre proposed a new framework to study the asymptotic distribution of rational points with bounded height. Instead of considering a height relative to one line bundle, it is natural to consider all possible heights. The purpose of this paper is to study the multi-height distribution on rational points of smooth, proper and split toric varieties over $\Q$. We prove that the expected asymptotic behavior holds for these varieties. In particular, we show that there is no accumulating subset for the multi-height distribution in the toric case. This is because we consider the rational points whose multi-height is in the interior of the dual of the effective cone.

A result concerning this all heights approach was given in \cite{demirhan2022distribution}, using height zeta functions. However, the condition $\beta = \pi^{\vee}(u)$ where $u \in (C_{\eff}(X)^{\vee})^{\circ}$ seems to be missing in \cite[theorem 1.1]{demirhan2022distribution}. If one considers the case of the projective line $\Proj^1$ with $(\beta_1,\beta_2) = (1,2)$, their result would imply that the asymptotic behavior is of the form $C.B^3$ whereas it is known to be of the form $C.B^2$.

In any case, our method using descent is totally different. It is inspired by the work of Salberger in \cite{salberger_torsor} and uses the method of counting the number of points of a lattice in a bounded domain developed in \cite{davenport_geometry_numbers} by Davenport.

\textbf{Structure. } We begin in Section 2 by recalling the multi-height formalism and the conjecture on the asymptotic distribution of rational points for \textit{quasi-Fano} varieties associated with this formalism. We then state the main result established in this article for toric varieties. In the following section, we introduce a crucial concept for proving our result: the universal torsor. We explain how to lift the height of a point locally to the universal torsor, describe the specific case of toric varieties, and conclude by linking the measure on universal torsors with the Tamagawa number of the variety. In the final section, we establish our result using estimation techniques from the geometry of numbers and the Möbius inversion formula.

\textbf{Acknowledgment. }I would like to thank my PhD advisor Emmanuel Peyre for all the remarks and suggestions he made during the writing process of this article. I would also like to thank the referee for their help in improving the exposition.

\section{Terminology and the precise statement}

\subsection{Arakelov heights}

In this paragraph, we recall,  following Peyre \cite{Peyre_beyond_height}, how to define a system of Arakelov heights for a smooth, proper and geometrically integral variety $V$ over a number field $K$.

\begin{Notations}
    We denote by $M_K$ the set of places of $K$, by $M_K^0$ the set of finite places and by $M_K^{\infty}$ its infinite places.

    If $v \in M_K^0$, we write $q_v = \sharp \F_v$ the cardinal of the residual field associated to $v$, while if $v \in M_K^{\infty}$, we set $q_v = \exp([K_v:\R])$.

    To $w \in M_K$ which divides $v \in M_{\Q}$, we define $|-|_w $ over $K_w$ by setting for any $x \in K_w$ :
    $$ | x |_w = | N_{K_w/\Q_v}(x) |_v .$$It is an absolute value except when $K_w$ is $\C$ in which case it is the square of an absolute value.

    To an invertible sheaf $\sheafL$ over a scheme $T$, we associate the line bundle $L$ whose sections correspond to $\sheafL$. 
    %$W$ is a covariant functor which gives an equivalence of categories between the inversible sheaves over $T$ and the line bundle over $T$.
    We denote $L^{\times} = L \setminus z(T)$, where $z(T)$ is the closed subscheme given by the zero section. %Again, we have an equivalence of categories given by the functor $L \mapsto L^{\times}$ between the line bundle and the $\G_m$-torsors over $T$.
\end{Notations}

We begin by defining the notion of an adelic norm over a vector bundle.

\begin{definition}
    Let $E \xrightarrow{e} V$ be a vector bundle over $V$.

   For any field extension $L$ over $K$, for any $P \in V(L)$, we denote by $E_P \subset E(L)$ the $L$-vector space which corresponds to the fiber $e^{-1}(P)$ of $e$ over $P$.
   
   An adelic norm $E$ over $e$ is the data of a family of continuous applications $(||-|| )_{v \in M_K}$ 
   $$ ||-||_v : E(K_v) \rightarrow \R_+$$
   such that:
\begin{enumerate}
    \item If $v \in M_K^0$, for any $P \in V(K_v)$, the restriction of $||-||_v$ to $E_P$ defines an ultrametric norm with values in the image of the absolute value $|-|_v$ i.e. $q_v^{\Z}$.

    \item If $v$ is a real place, for any $P \in V(K_v)$, the restriction to $E_P$ is a Euclidean norm.

    \item If $v$ is a complex place, for any $P \in V(K_v)$, the restriction to $E_P$ is the square of a Hermitian norm.

    \item There exists a finite set of places $S \subset M_K$ (which contains $M_K^{\infty}$) and a model $\mathcal{E} \rightarrow \mathcal{V}$ of $E \rightarrow V$ over $\sheafO_S$ such that for any place $v \in M_K \setminus S$, for any $P \in \mathcal{V}(\sheafO_v)$,
    $$ \mathcal{E}_P = \{ y \in E_P | \ \ ||y||_v \leqslant 1 \} .$$
    
\end{enumerate}

\end{definition}

\begin{remark}
    For smooth, proper and split toric varieties over $\Q$ such models always exists over $\Z$.
\end{remark}

The following example is particularly important, because, as we shall later see, a natural system of heights over toric varieties can be deduced from this example.

\begin{example}
    Let $N \in \N^*$. For $v \in M_K$, we consider the map:
    \begin{align*}
        ||-||_v &:  \ K_v^{N+1} \rightarrow \R_+ \\
        &(y_0,..,y_N) \longmapsto \max\limits_{i = 0}^N |y_i|_v
    \end{align*}
    This defines an adelic norm $(||-||_v)_{v \in M_K}$ on the line bundle $\sheafO_{\Proj^N}(-1) \rightarrow \Proj^N$ which corresponds to the $\G_m$-torseur $\affine^{N + 1} \setminus \{ 0 \} \rightarrow \Proj^N$.
\end{example}

\begin{definition}
    If $L \rightarrow V$ is a line bundle, $\{ ||-||_v \}$ an adelic norm on $L$, then we define the height $H_L : V(K) \rightarrow \R_+$ for any $x \in V(K)$ by:
    $$H_L(x) = \prod\limits_{v \in M_K} ||y||_v^{-1}$$where $y \in L^{\times}(K)$ lies above $x$.
\end{definition}

\begin{example}\label{norme_adelique_fubini}
    In particular, by the previous example, for any $x \in K^{N+1}-\{0\}$, if we denote by $P = [x_0:..:x_n] \in \Proj^N(K)$ its image, we get: 
    $$ H_{\sheafO_{\Proj^N}(1)}(P) =  \prod\limits_{v \in M_K}  \max\limits_{0 \leq i \leq N} |x_i|_v . $$
\end{example}

Now, we recall the definition of the group of Arakelov heights $\mathcal{H}(V)$ over $V$:

\begin{definition}
    Let $L$ and $L'$ be two adelic line bundles over $V$ (i.e. equipped with an adelic norm). Let $(||-||_v)$ be the adelic norm on $L$.

    We say that $L$ and $L'$ are equivalent as adelic line bundles when there exists $M > 0$ and a family $(\lambda_v)_v$ of nonnegative real numbers such that:
    \vspace{0,1 cm}
    \begin{enumerate}
        \item $\{v | \lambda_v \neq 1\}$ is finite and $\prod\limits_v \lambda_v = 1$
        \item there exists an isomorphism of adelic line bundles from $L^{\otimes M}$ to $L'^{\otimes M} $, where the adelic norm on $L^M$ is induced by the adelic norm on $L$ given by $\{ (\lambda_v . ||-||_v)_{v \in M_K} \} $.
    \end{enumerate}
    We denote by $\mathcal{H}(V)$ the set of equivalence classes of adelic line bundles. This set is naturally equipped with a structure of group given by the tensorial product of adelic line bundles.
\end{definition}

Hence, we can define the notion of system of heights and of multi-height over \nolinebreak $V$.

\begin{definition}
    A \textbf{system of heights} over $V$ corresponds to the data of a section $s : \Pic(V) \rightarrow \mathcal{H}(V)$ of the forgetful morphism $$ \mathrm{o} : \mathcal{H}(V) \rightarrow \Pic(V) .$$

    Such system of heights defines a \textbf{multi-height map}:
    $$ h : V(K) \rightarrow \Pic(V)^{\vee}_{\R}$$
    such that for any $P \in V(K)$, $h(P)$ is the linear form defined for any $L \in \Pic(V)$ by:
    $$ h(P)(L) = \log H_{s(L)}(P) .$$
\end{definition}

\begin{remark}
    The multi-height is well-defined as $h(P)(L)$ does not depend on the choice of the representative of the equivalence class (see \cite[remark 4.2]{Peyre_beyond_height}).
\end{remark}

\subsection{The multi-height conjecture}

In order to state the conjecture made by Peyre \cite[question 4.8]{Peyre_beyond_height}, we shall define a measure $\nu$ on $\Pic(V)^{\vee}_{\R}$ as follows:

\begin{definition}\label{mesure_picard_group}
    $$ \nu(\mathrm{D}) := \int\limits_{\mathrm{D}} e^{\langle \w_V^{-1},y \rangle} dy $$for any compact subset $\mathrm{D}$ of $\Pic(V)^{\vee}_{\R}$, where the Haar measure $dy$ on $\Pic(V)^{\vee}_{\R}$ is normalized so that the covolume of the dual lattice of the Picard group is one.
\end{definition}
 
For any domain $\mathrm{D} \subset \Pic(V)^{\vee}_{\R}$, we write:
$$ V(K)_{h \in \mathrm{D}} = \{ P \in V(K) \mid h(P) \in \mathrm{D} \} .$$

\begin{definition}
    We shall say that $V$ is quasi-Fano if it satisfies the following hypotheses:
    \vspace{0,1 cm}
    \begin{enumerate}
        \item A multiple of the class of $\w_{V}^{-1}$ is the sum of an ample divisor and a divisor with normal crossings;
        \item The set $V(\Q)$ is Zariski dense;
        \item The group $H^i(V,\sheafO_V) = \{ 0 \}$ for $i =1, 2$;
        \item The geometric Brauer group $\Br(\overline{V})$ is trivial and the geometric Picard group $\Pic(\overline{V})$ has no torsion;
        \item The closed cone $\text{C}_{\eff}(\overline{V})$ is generated by the classes of a finite set of effective divisors.
    \end{enumerate}
\end{definition}

Any Fano variety and any smooth, proper and split toric variety is quasi-Fano. Let us quickly explain where the last claim comes from. The effective cone of such a variety is generated by the divisors associated to each edge of the fan defining the toric variety, and its anticanonical bundle is the sum of these divisors, so it is in the interior of the effective cone. Hence we get that the first and the fifth hypotheses are verified. As such a variety is covered by open subschemes isomorphic to $\affine^d$, where $d$ is the dimension of the toric variety, we get the second assertion and the fact that its geometrical Brauer group is trivial. For such toric variety, by \cite[proposition 4.2.5]{CoxLittleSchenck2011}, the geometric Picard group is torsion free. Finally, by \cite[theorem 9.2.3]{CoxLittleSchenck2011}, the third hypothesis is verified. Peyre conjectures the following in \cite[question 4.8]{Peyre_beyond_height}:

\begin{conjecture}\label{manin_peyre_conjecture}
We assume that our variety is quasi-Fano and that the effective cone $C_{\eff}(V)$ is generated by a finite number of line bundles. Let $\mathrm{D}_1$ be a compact polyhedron of $\Pic(V)^{\vee}_{\R}$ and u be an element of the interior of the dual of the effective cone $(C_{\eff}(V)^{\vee})^{\circ}$. For a real number $B > 1$, we set:
$$ \mathrm{D}_B = \mathrm{D}_1 + \log(B) u .$$
 
There exists an exceptional subset $T \subset V(K)$ such that we have an asymptotic behaviour of the form:
$$ \sharp ( V(K)-T )_{h \in \mathrm{D}_B} \underset{B \rightarrow +\infty}{\sim} \nu(\mathrm{D}_1 ) \beta(V) \tau(V) B^{\langle \w_V^{-1},u \rangle}$$ where $\beta(V) = \sharp H^1(\Q,\Pic(V_{\overline{\Q}}))$ and $\tau(V)$ is the Tamagawa number of $V$ (see definition \ref{tamaga_number_variety}).

\end{conjecture}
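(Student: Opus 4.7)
The plan is to attack the conjecture by combining universal-torsor descent with a multi-variable Tauberian analysis of the height zeta function
$$ Z(s) \;=\; \sum_{P \in V(K) \setminus T} \exp(-\langle s, h(P) \rangle), \qquad s \in \Pic(V)_{\C}. $$
Because $V$ is quasi-Fano, $\Pic(\overline{V})$ is torsion-free of finite rank and $\Br(\overline{V}) = 0$, so by the Colliot-Thélène--Sansuc formalism a universal torsor $\pi \colon \mathcal{T}_V \to V$ exists. Every rational point of $V(K) \setminus T$ should lift, through exactly one of finitely many twists of $\mathcal{T}_V$, to an integral point on that twist; the twists are parametrised by $H^1(K, \Pic(V_{\overline{K}}))$, and the summation over them is where the factor $\beta(V)$ is destined to appear.

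Once this reduction is made, the multi-height $h(P)$ becomes a $\Pic(V)^{\vee}$-valued linear functional of the logarithmic adelic norms of any torsor lift, so counting $P$ with $h(P) \in D_B$ reduces to an integral-point count on $\mathcal{T}_V(\mathbf{A}_K)$ inside an adelic region whose cross-section is $D_1$ and whose dilation direction is the interior vector $u$. I would carry out this count via Poisson summation at the archimedean places combined with explicit local density computations at the finite places, in the spirit of Salberger \cite{salberger_torsor} and Peyre \cite{peyre_zeta_height_function}. The archimedean integration produces the main term $\nu(D_1) B^{\langle \w_V^{-1}, u \rangle}$, while the product of local densities assembles to $\tau(V)$ after correcting by convergence factors tied to the $\Gal(\overline{K}/K)$-representation $\Pic(V_{\overline{K}})$. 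The hypothesis $u \in (C_{\eff}(V)^{\vee})^{\circ}$ is essential at this point: it forces $D_B$ to enter the dual of the effective cone transversally, so no face of positive codimension contributes a spurious logarithmic factor and the cross-sectional integral $\nu(D_1)$ is finite.

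The principal obstacle is the analytic continuation of $Z(s)$ past the boundary face $\{ s : \langle s, u \rangle = \langle \w_V^{-1}, u \rangle \}$ of the shifted complex effective cone, with a controlled pole whose residue delivers the Peyre constants. Outside a few structured geometric classes --- toric varieties (treated in this paper), equivariant compactifications of algebraic groups, and varieties accessible by the circle method --- no general analytic tool is available, and one would ultimately need a genuine multi-variable Tauberian theorem capable of extracting a polyhedral asymptotic out of a zeta function with possible boundary singularities, together with a multi-variable refinement of Heath-Brown's $\delta$-method for more rigid varieties. A secondary subtlety is the definition of the exceptional set $T$: I would take it to be the accumulating locus cut out by Peyre's freeness filtration, so that the removed points are precisely those whose multi-height lies in a face of $C_{\eff}(V)^{\vee}$ rather than in its interior, but adapting this filtration to the fixed direction $u$ (rather than to a single line bundle) is itself a delicate point which I expect to be the main technical hurdle beyond the toric case.
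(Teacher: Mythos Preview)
The statement you are attempting is a \emph{conjecture}, not a theorem; the paper does not claim or provide a proof of it in general. What the paper actually proves is the special case where $V$ is a smooth, projective, split toric variety over $\Q$ (Theorem~\ref{theorem_principal_multi_hauteur}), in which $\beta(V)=1$ and no exceptional set $T$ is needed. Your write-up is therefore not a proof but a research outline, and you implicitly acknowledge this yourself when you say that outside toric varieties, equivariant compactifications, and circle-method varieties ``no general analytic tool is available.'' That is exactly the current state of affairs: the analytic continuation of $Z(s)$ past the critical face and the multi-variable Tauberian step you describe are genuinely open in this generality, so there is no gap to identify beyond the one you already flag.

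Even restricted to the toric case, your proposed route differs from the paper's. You propose the height-zeta-function/Poisson-summation approach followed by a Tauberian theorem; the paper explicitly eschews this (the introduction remarks that the zeta-function approach of \cite{demirhan2022distribution} appears to miss the hypothesis $u\in (C_{\eff}(X)^{\vee})^{\circ}$). Instead, the paper lifts to the explicit universal torsor $\stackT_{\Sigma}\subset\affine^{\Sigma(1)}$, counts lattice points in the region $\mathcal{D}_B=B^{u}\cdot\mathcal{D}_1$ directly via Davenport's geometry-of-numbers estimate (Proposition~\ref{bhargava_geometrie_nombre}), and then passes from $\Z^{\Sigma(1)}$ to $\stackT_{\Sigma}(\Z)$ by a M\"obius inversion using Salberger's $\mu$-function. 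The condition $u\in(C_{\eff}(X)^{\vee})^{\circ}$ enters concretely: it makes each coordinate dilation factor $B^{\langle [D_{\rho}],u\rangle}$ grow, so the projection volumes in Davenport's error term are dominated by the main volume. This buys an explicit power-saving error $O\bigl(B^{-(1-\frac{1}{f}-\varepsilon)\min_{\rho}\langle [D_{\rho}],u\rangle}\bigr)$, which a bare Tauberian argument would not give.
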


\subsection{The Manin-Peyre conjecture in the case of toric varieties}

Let $X$ be a smooth, proper and split toric variety over $\Q$. We shall use in the rest of this paper that the anticanonical line bundle $\w_X^{-1}$ is big, i.e. is in the interior of the effective cone $\cone$. 

\begin{Notations}\label{notations_varietes_toriques}
We denote by $T \subset X$ its open torus, by $\Sigma$ a fan associated to $X = X_{\Sigma}$, by $\Sigma(1)$ the set of edges of $\Sigma$ and by $m$ the number of edges. We denote by $t$ the rank of the Picard group $\Pic(X)$. With our hypothesis, it is a free abelian group of finite rank. To an edge $\rho \in \Sigma(1)$, we associate the $T$-invariant divisor $D_{\rho}$.

We write $$\pi : \Div_T(X) = \Z^{\Sigma(1)} \rightarrow \Pic(X)$$ the natural map which associates to a $T$-invariant divisor $D$ the class of the associated line bundle in $\Pic(X)$ which we write $[D] = \pi(D)$. Recall that $$\w_{X}^{-1} = \sum\limits_{\rho \in \Sigma(1)} [D_{\rho}] .$$We denote by $N = X_*(T)$ the 1-parameter subgroups of $T$ and by $M = X^*(T)$ its dual, the group of characters of $T$.

\end{Notations}

\begin{convention}\label{convention_adelic_norm_toric_variety}
    A line bundle $L$ over $X$ is endowed with the associated natural adelic norm $(||-||_v)_{v \in M_K}$ following \cite[definition 9.2]{salberger_torsor} (see also \cite[proposition 2.1.2]{Batyrev1990}).
\end{convention}

Let us explain how one can interpret this adelic norm in the case of $\Proj^N$.

\begin{example}
    For $0 \leq i \leq N$, we denote by $D_{i}$ the divisor $\G_m^{N}$-invariant of $\Proj^N$ corresponding to the hyperplane $\{ X_i = 0 \} \subset \Proj^N$. Then the adelic norm over $\sheafO_{\Proj^N}(1)$ associated to $D_i$ corresponds to the one of the example \ref{norme_adelique_fubini}.
\end{example}

\begin{proof}
    This is a consequence of \cite[proposition 9.8]{salberger_torsor}.
\end{proof}

\begin{remark}
We can moreover observe that the construction of the adelic norm in \cite[definition 9.2]{salberger_torsor} over $\sheafO(D)$ depends only on the class of the $T$-invariant divisor $D$ modulo $M = X^*(T)$, that is to say its class $[D]$ in $\Pic(X)$.
\end{remark}We shall later see using universal torsor that this way of defining a height system over our toric variety $X$ (and a representative of its system of heights) makes the count of rational points of our variety easier.

\begin{Notations}
    Let $l$ be the smallest positive integer such that there exists $l$ rays of $\Sigma$ not contained in a cone of $\Sigma$. Note that $l \geqslant 2$.
\end{Notations}

The main theorem of this article states that the conjecture \ref{manin_peyre_conjecture} for the multi-height asymptotic behaviour of rational points of toric varieties is true:

\begin{theorem}\label{theorem_principal_multi_hauteur}
Let $\mathrm{D}_1$ be a finite union of compact polyhedrons in $\Pic(X)^{\vee}_{\R}$ and $u$ be an element of the interior of the dual of the effective cone $(C_{\eff}(X)^{\vee})^{\circ}$. For a real number $B > 1$, we set:
$$ \mathrm{D}_B = \mathrm{D}_1 + \log(B) u .$$Let $\varepsilon \in ]0,1-\frac{1}{l}[$. Then we have an asymptotic behaviour of the form:

$$ \sharp ( X(\Q) )_{h \in \mathrm{D}_B} \underset{B \rightarrow +\infty}{=} \nu(\mathrm{D}_1 ) . \tau(X) .  B^{\langle \w_X^{-1},u \rangle} \left( 1 + O\left(B^{-(1-\frac{1}{l} - \varepsilon).\min\limits_{\rho \in \Sigma(1)} \langle [D_{\rho}] , u \rangle} \right) \right) $$ where $\nu$ is the measure over $\Pic(X)^{\vee}_{\R}$ defined as in \ref{mesure_picard_group}.

\end{theorem}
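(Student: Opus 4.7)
The plan is to lift the counting problem on $X(\Q)$ to a counting problem on the versal torsor of $X$, following Salberger's descent strategy~\cite{salberger_torsor}. Since $X$ is smooth, projective and split over $\Q$, this torsor is the open subscheme of $\affine^{\Sigma(1)}$ obtained by removing the ``irrelevant'' closed subset attached to the primitive collections (minimal subsets of $\Sigma(1)$ not contained in any cone of $\Sigma$). Combining this with Proposition~\ref{fonctorialite_systeme_hauteur} and the description of the adelic norms on projective spaces in Example~\ref{norme_adelique_fubini}, one identifies $X(\Q)$, up to the action of $\{\pm 1\}^t$, with the set of tuples $\mathbf{y} = (y_\rho)_{\rho \in \Sigma(1)} \in \Z^{\Sigma(1)}$ satisfying the coprimality condition
$$ \forall p \text{ prime}, \ \forall P \text{ primitive collection of } \Sigma, \ \exists \rho \in P : p \nmid y_\rho. $$
After this identification, the multi-height $h(P_{\mathbf{y}})$ is explicitly expressed in terms of the $|y_\rho|_\infty$ through the toric adelic norms, and the condition $h(P_{\mathbf{y}}) \in \mathrm{D}_B$ carves out a semi-algebraic region $R_B \subset \R^{\Sigma(1)}$ obtained from $R_1$ by a translation of amplitude $\log(B) \cdot u$.

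By Möbius inversion over the primitive collections of $\Sigma$, the count decomposes as
$$ \sharp ( X(\Q) )_{h \in \mathrm{D}_B} = \frac{1}{2^t} \sum_{\mathbf{d}} \mu(\mathbf{d}) \, \sharp ( \Lambda_{\mathbf{d}} \cap R_B ), $$
where $\mathbf{d}$ indexes squarefree divisibility data indexed by primitive collections and $\Lambda_{\mathbf{d}} \subset \Z^{\Sigma(1)}$ is the corresponding sublattice. Each inner lattice-point count is then estimated by Davenport's method~\cite{davenport_geometry_numbers}: the region $R_B$ is Lipschitz parametrizable, its volume is of order $B^{\langle \w_X^{-1}, u \rangle}$ (since $\mathrm{D}_B = \mathrm{D}_1 + \log(B)\, u$ and the Haar measure on the $M = X^*(T)$-fibres of $\pi$ differs from the measure $\nu$ of Definition~\ref{mesure_picard_group} only by the covolume of $M$), and its lower-dimensional projections contribute boundary terms of smaller order.

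To handle the tail of the Möbius sum I would truncate at $\|\mathbf{d}\| \leq B^{\eta}$ for some $\eta < 1/f$ arbitrarily close to $1/f$ and bound the remainder trivially. The integer $f$ is decisive: since every primitive collection has size at least $f$, any prime $p$ appearing in a nontrivial $\mathbf{d}$ imposes divisibility of at least $f$ of the $y_\rho$'s. Combined with the hypothesis $u \in (\cone^{\vee})^{\circ}$, which forces $\langle [D_\rho], u \rangle > 0$ for every $\rho$, this produces a bound of order $d^{-f \min_\rho \langle [D_\rho], u \rangle} \cdot B^{\langle \w_X^{-1}, u \rangle}$ on each tail contribution. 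Summing over $d \geq B^{\eta}$ then yields precisely the claimed error term $O\bigl(B^{\langle \w_X^{-1}, u \rangle - (1 - 1/f - \varepsilon) \min_\rho \langle [D_\rho], u \rangle}\bigr)$.

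Finally, the surviving main terms reassemble into an absolutely convergent Euler product multiplied by a real volume. One recognizes each local factor at $p$ as the local density of $X(\Q_p)$ viewed through the torsor, and identifies the full product with $\nu(\mathrm{D}_1)\, \tau(X)\, B^{\langle \w_X^{-1}, u \rangle}$, using the definition of the Tamagawa measure recalled in~\ref{tamaga_number_variety}. The main obstacle will be this last step in the multi-height setting: one must verify that Salberger's single-height computation of local densities extends to arbitrary finite unions of compact polyhedra $\mathrm{D}_1$, and that the shape of $\mathrm{D}_1$ enters the leading constant only through the archimedean factor $\nu(\mathrm{D}_1)$, leaving the arithmetic Euler product unchanged. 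Everything else is a quantitative refinement of Davenport's lattice-point counting, made uniform in $B$.
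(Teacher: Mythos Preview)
Your overall architecture is the paper's: lift to the universal torsor inside $\affine^{\Sigma(1)}$, remove the coprimality conditions by M\"obius inversion with Salberger's function $\mu$, estimate each resulting lattice count by Davenport, and reassemble the main term into $\nu(\mathrm{D}_1)\,\tau(X)\,B^{\langle\w_X^{-1},u\rangle}$. Your observation that the shape of $\mathrm{D}_1$ enters only through the archimedean factor is correct and is exactly how the paper proceeds (Proposition~\ref{analysis_measure_univ_torsor_at_infty}).

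The genuine gap is in your error analysis. The tail bound you state, ``of order $d^{-f\min_\rho\langle[D_\rho],u\rangle}\,B^{\langle\w_X^{-1},u\rangle}$ on each tail contribution'', does not follow from the divisibility constraint you invoke: the actual bound on $N_{\mathbf{d}}(B)=\sharp(\Lambda_{\mathbf d}\cap R_B)$ is $O\bigl(B^{\langle\w_X^{-1},u\rangle}/\Pi(\mathbf d)\bigr)$, with no $\min_\rho\langle[D_\rho],u\rangle$ appearing as an exponent on the M\"obius variable. The integer $f$ does not enter pointwise; it enters through Salberger's convergence lemma (Proposition~\ref{value_global_mesure_mobius_fonction} and Lemma~\ref{majoration_fonction_de_mobius}): $\sum_{\mathbf d}|\mu(\mathbf d)|/\Pi(\mathbf d)^s$ converges for $\Re(s)>1/f$, whence $\sum_{\Pi(\mathbf d)\le Y}|\mu(\mathbf d)|=O(Y^{1/f+\varepsilon})$ and $\sum_{\Pi(\mathbf d)>Y}|\mu(\mathbf d)|/\Pi(\mathbf d)=O(Y^{-(1-1/f-\varepsilon)})$. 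Accordingly the paper truncates at $\Pi(\mathbf d)=Y=B^{\min_\rho\langle[D_\rho],u\rangle}$, the level at which Davenport's error for $\Lambda_{\mathbf d}$ matches its main term, not at $B^{\eta}$ with $\eta<1/f$. With your truncation and your claimed per-term bound the two ranges do not balance and the stated exponent $(1-1/f-\varepsilon)\min_\rho\langle[D_\rho],u\rangle$ cannot be recovered. You also skip the compactness input that makes Davenport uniform: one needs $\mathcal D_1=h_{\stackT,\infty}^{-1}(\mathrm D_1)$ bounded, which the paper obtains by showing $h_{\stackT,\infty}^{-1}(\{0\})\to X(\R)$ is a finite cover of a compact space (Corollary~\ref{corollaire_compacite}); ``Lipschitz parametrizable'' alone does not supply this.
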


\begin{remark}
In forthcoming work, we will see that, provided that we restrict ourselves to a generating family of 
$\Pic(X)$ consisting of effective line bundles, it is possible to obtain a similar result without imposing a lower bound on the height, but this introduces a $\log(B)$ factor in the error term. We hope also to show that this result implies the usual form of Manin's conjecture.
\end{remark}

\begin{example}\label{exemple_compact_polyhèdre}
    If $ \mathbf{b} = (L_1,..,L_r)$ is a family of line bundles such that their classes $\R$-generate the Picard group $\Pic(X)_{\R}$, $\alpha_1,..,\alpha_r,\beta_1,..,\beta_r$  are real numbers such that for any $i$, $\alpha_i < \beta_i$, then we define a compact polyhedron of $\Pic(X)^{\vee}_{\R}$: $$\mathrm{D}_1 = \{ a \in \Pic(X)^{\vee}_{\R} \mid \alpha_i \leqslant a(L_i) \leqslant\beta_i \} .$$All compact polyhedrons of $\Pic(X)^{\vee}_{\R}$ are of this form.
\end{example}

\section{The notion of universal torsors}

In this section, we shall present how the study of the multi-height distribution of rational points of toric varieties naturally lifts to a counting problem in the universal torsor.

\subsection{Generalities about universal torsors}

Originally, Colliot-Thélène and Sansuc defined universal torsors using a spectral sequence (see \cite[section 2]{ColliotTheleneSansuc1987}) that can be generalized over an noetherian base scheme $S$. For this paragraph, we essentially refer to \cite[section 5]{salberger_torsor}.

We suppose that $ f : V \rightarrow S$ is a smooth proper surjective morphism of constant relative dimension with geometrically connected fibers such that $R^2 f_* \sheafO_V = 0 $ and the relative Picard functor $\Pic_{V/S}$ is étale locally the constant sheaf associated with an ordinary free abelian group of finite rank. We also suppose that $f$ has a rational section. We shall denote by $T_{\NS}$ the group scheme associated to $\Pic_{V/S}$ by duality. Then we have the following exact sequence, functorial in $S$ and $V$ (see \cite[section 5, p.163]{salberger_torsor} which generalizes \cite[2.0.2]{ColliotTheleneSansuc1987}):

$$ 0 \rightarrow H^1(S,T_{\NS}) \xrightarrow{i_1} H^1(V,T_{\NS}) \xrightarrow{\chi} \Hom_{\grp}(\Pic_{V/S},\Pic_{V/S}) \xrightarrow{\partial} H^2(S,T_{\NS})$$

\begin{definition}
    A $T_{\NS}$-torsor $\stackT \rightarrow V$ is said to be universal if its class $$[\stackT] \in H^1(V,T_{\NS})$$ is sent to $\id_{\Pic_{V/S}}$ via $\chi$.
\end{definition}

In the case we will be interested, $S = \Spec(\Z)$, and our scheme will be split that is to say  $\Pic_{V/S}$ is the constant sheaf associated with a free abelian group of finite rank. With this hypothesis, we have that for any $S$-scheme $T$, $\Pic_{V/S}(T) = H^1(V \underset{S}{\times} T, \G_m) $ and $\Hom_{\grp}(\Pic_{V/S},\Pic_{V/S}) = \Hom_{\grp}(\Pic(V),\Pic(V))$. Moreover, the group morphism $\chi$ can be defined for any $[\stackT] \in H^1(X,T_{\NS}) $ as follows (see \cite[Definition 2.3.2 and Theorem 2.3.6]{Skorobogatov2001}):
\begin{align*}
    \chi([\stackT]) : \ &H^1(V,\G_m) \rightarrow H^1(V,\G_m) \\
    &[L^{\times}] \longmapsto [\stackT \overset{T_{\NS}}{\times_{\chi_L}} \G_m ]
\end{align*}where $\chi_L$ is the character of $T_{\NS}$ associated to $[L^{\times}] \in H^1(V,\G_m)$ i.e. $[L] \in \Pic(V)$.

Let us choose $\base = (L_1,..,L_t)$ a basis of $\Pic(V)$ and write $$\stackT_{\base} = L_1^{\times} \times_V ... \times_V L_t^{\times} $$where an element $t \in T_{\NS}$ acts on $L_i^{\times}$ by multiplication by $[L_i](t) \in \G_m$. The torus $T_{\NS}$ is isomorphic to $\G_m^t$ via the characters given by the basis $\base$. Hence we get a natural left action of $T_{\NS}$ over $$q: \stackT_{\base} \rightarrow V$$ which makes $\stackT_{\base}$ a $T_{ \NS}$-torsor over $V$. By \cite[proposition 8.1]{peyre_counting_points_torsors}, we get that the previous construction is independent of the choice of the basis of $\Pic(V)$ because it is unique up to $T_{ \NS}$-equivariant isomorphism. To simplify the notations, we shall denote it $\stackT \rightarrow V$. If each $L_i$ has a non constant global section, we can show that $\chi([\stackT]) = - \id_{\Pic_{V/S}}$.

\begin{remark}
    By functoriality of this cohomological characterization, if $\stackT \rightarrow V$ is a universal torsor and $S' \rightarrow S$ is a base change, then $\stackT_{S'} \rightarrow V_{S'}$ is again a universal torsor.
\end{remark}

Let us recall the following proposition from \cite[proposition 5.15]{salberger_torsor}, which is valid under our assumptions that $\Pic_{V/S}$ is a constant sheaf and $V$ has a rational point:

\begin{prop}
    There is a universal torsor over $V$. Moreover the isomorphism classes of universal torsors over $V$ are parametrized by the $H^1(S,T_{\NS})$-orbits in $H^1(V,T_{\NS})$ defined by $\chi^{-1}(\id_{\Pic_{V/S}})$.
\end{prop}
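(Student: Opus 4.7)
The plan is to prove existence by exhibiting the explicit torsor $\stackT_{\base}$ already introduced, and parameterization by exploiting the functorial exact sequence available in this split setting. Both parts reduce to unpacking the structure of the map $\chi$.

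For existence, I would verify directly that $\chi([\stackT_{\base}]) = \id_{\Pic_{V/S}}$ for the explicit $\stackT_{\base} = L_1^{\times} \times_V \cdots \times_V L_t^{\times}$. The basis $\base$ identifies $T_{\NS}$ with $\G_m^t$ so that the character $\phi_{L_i}$ is the $i$-th projection, and the contracted product $\stackT_{\base} \overset{T_{\NS}}{\times_{\phi_{L_i}}} \G_m$ collapses all factors except the $i$-th, returning $L_i^{\times}$. Hence $\chi([\stackT_{\base}])$ fixes each generator $[L_i]$ of $\Pic(V)$ and so equals the identity. The independence of this construction from the choice of $\base$, noted above via \cite[proposition 8.1]{peyre_counting_points_torsors}, ensures the class $[\stackT_{\base}] \in H^1(V,T_{\NS})$ is intrinsic.

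For the parameterization, I would invoke the exact sequence
$$ 0 \rightarrow H^1(S,T_{\NS}) \xrightarrow{i_1} H^1(V,T_{\NS}) \xrightarrow{\chi} \Hom_{\grp}(\Pic_{V/S},\Pic_{V/S}) \xrightarrow{\partial} H^2(S,T_{\NS}).$$
Since $\chi$ is a homomorphism of abelian groups and $\chi^{-1}(\id_{\Pic_{V/S}})$ is non-empty by the existence step, this fiber is a coset of $\ker(\chi) = \image(i_1)$. By injectivity of $i_1$, that coset is a principal homogeneous space under $H^1(S,T_{\NS})$ acting by translation: for any $[\stackT], [\stackT'] \in \chi^{-1}(\id)$, there exists a unique $\alpha \in H^1(S,T_{\NS})$ with $[\stackT] - [\stackT'] = i_1(\alpha)$, and conversely adding $i_1(\alpha)$ to a universal torsor yields another. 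Since $H^1(V,T_{\NS})$ classifies $T_{\NS}$-torsors over $V$ up to isomorphism, the isomorphism classes of universal torsors coincide with $\chi^{-1}(\id)$, which is the single $H^1(S,T_{\NS})$-orbit asserted in the statement.

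The main obstacle is conceptual rather than technical: fixing the convention for the contracted product so that $\chi([\stackT_{\base}]) = +\id_{\Pic_{V/S}}$ rather than $-\id_{\Pic_{V/S}}$, as flagged in the remark preceding the proposition. If the sign comes out wrong one must replace $\stackT_{\base}$ by its dual construction. Once this bookkeeping is settled, everything else is a formal consequence of the exact sequence.
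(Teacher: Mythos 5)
Your argument is correct and, while the paper itself disposes of this proposition by citation to Salberger's proposition 5.15, the paper has already laid out the construction of $\stackT_{\base}$ and observed that $\chi([\stackT_{\base}]) = \id_{\Pic_{V/S}}$ in the paragraphs immediately preceding the proposition; you are essentially assembling that material into a self-contained proof. Both parts of your argument check out: the contracted product computation shows $\chi([\stackT_{\base}])$ fixes each basis element $[L_i]$ and hence is the identity (modulo the sign convention you rightly flag), and the exact sequence forces $\chi^{-1}(\id)$, being a non-empty fiber of the group homomorphism $\chi$, to be a coset of $\ker\chi = \operatorname{Im}(i_1)$ and hence a principal homogeneous space under $H^1(S,T_{\NS})$.

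One point worth making explicit: your existence argument is confined to the split case, since it requires choosing a $\Z$-basis $\base = (L_1,\dots,L_t)$ of $\Pic(V)$ — i.e.\ it uses that $\Pic_{V/S}$ is the constant sheaf associated with a free abelian group. The proposition, as Salberger proves it, holds under the weaker hypothesis that $\Pic_{V/S}$ is merely étale-locally constant; there the existence claim is not for free, and one must use the rational-section hypothesis on $f$ to show $\partial(\id_{\Pic_{V/S}}) = 0$ in $H^2(S,T_{\NS})$, so that $\id$ lies in the image of $\chi$ by exactness. Since the paper only ever applies the proposition with $S = \Spec(\Z)$ and $V$ split, your argument suffices for the paper's purposes; but it would be cleaner to state at the outset that you are proving the proposition under the split hypothesis and pointing to Salberger for the general statement, rather than implicitly assuming the split case while citing the more general phrasing.
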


Cohomological arguments give that if $\alpha \in H^1(V,T_{\NS})$ is the class of a universal torsor $\stackT \rightarrow V$, $x \in V(S)$ and $P \rightarrow S$ is a $T_{\NS}$-torsor associated to the class $\alpha(x)$ then $\stackT \overset{T_{\NS}}{\times} P \rightarrow V$ is a universal torsor such that the fiber over $x$ has a rational point. This come from the fact that if $\beta \in H^1(V,T_{\NS}) $ is the class of $\stackT \overset{T_{\NS}}{\times} P \rightarrow V$, we have by construction $\beta(x) = 0$.

Suppose again that $S = \Spec(\Z)$ and that $\Pic_{V/\Z}$ is the constant sheaf associated to a free abelian group of finite rank $t$. As $\Z$ is a principal ring, we have that $H^1(\Z,T_{\NS} ) = \{1\}$. Thus there is up to isomorphism a unique universal torsor $q : \stackT \rightarrow V$. Moreover, we have:

$$ q(\stackT(\Z)) = V(\Z) .$$

%In this subsection, let us suppose again that $V$ is a smooth, projective and geometrically integral variety over a number field $K$. We add the hypothesis that $\Pic(\Bar{V})$ is free of finite rank  $t = \rank(\Pic(\Bar{V}))$.

%$H^0(\Bar{V},\G_m) = \Bar{K}^{\times} $.

%\begin{Notations}
    %We denote by $T_{N S}$ the Neron-Severi torus whose character group is given by the $\Gal(\Bar{K}/K)$-lattice $\Pic(\Bar{V})$. In the case where $V$ is split over $K$, $T_{N S}$ naturally identifies to $\G_m^t$ choosing a basis of $\Pic(V)$.
%\end{Notations}

%\begin{definition}\label{definition_peyre_universal_torsor}
    %A $T_{N S}$-torsor $q : \stackT \rightarrow V$ would be called universal when, after the base change from $K$ to $\Bar{K}$, it is isomorphic as $\G_m^t$-torsor to $ p  : \stackT_{\base} \rightarrow \Bar{V}$. where $\base$ is any basis of $\Pic(\Bar{V})$.
%\end{definition}

%\begin{question}
   %Dans le théorème précédent, EST CE QUE L'HYPOTHESE LES $L_i$ sont finiment engendrés suffit ?
%\end{question}

\subsection{Local heights defined over the universal torsors}

We keep the hypotheses of the previous paragraph on $V$. By \cite[remark 4.19]{Peyre_beyond_height}, the universal torsor over $V_{\Q}$ is characterized by a universal property as a \textit{pointed torsor}.

\begin{definition}
    A pointed variety over $\K$ is a variety $V$ equipped with a chosen rational point $x \in V(\K)$. A pointed torsor over $V$ is a torsor $P \xrightarrow{ \pi } V$ with a rational point $t \in P(\K)$ such that $\pi(t) = x$.
\end{definition}

We write $q : \stackT \rightarrow V$ for the universal torsor we have chosen. We fix $y_0 \in \stackT(\Q)$ and we denote by $x_0 \in V(\Q)$ its image via $q$. We can now state the following proposition:

\begin{prop}
    For any pointed torsor $P \rightarrow V$ which sends $p_0 \in P(\Q)$ to $x_0 \in V(\Q)$, under a multiplicative group scheme $T$, there exists a unique group morphism $\varphi : T_{N S} \rightarrow T$ and a unique $\varphi$ -équivariant morphism $\Phi_P : \stackT \rightarrow P$ over $V$ which is compatible with the marked point. That is to say the following diagram is commutative:
    \begin{center}
\begin{tikzcd}
{(\stackT,y_0)} \arrow[rr, "\exists! \Phi_P"] \arrow[rd] &         & {(P,p_0)} \arrow[ld] \\
                                                       & {(V,x_0)} &                   
\end{tikzcd}
    \end{center}
\end{prop}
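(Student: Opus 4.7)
The plan is to classify pointed torsors under tori by their type. Given a $T$-torsor $q : P \to V$, the \emph{type} of $P$ is the group morphism
$$\tau_P : \hat T \longrightarrow \Pic(V),\qquad \chi \longmapsto \Big[P \overset{T}{\times}_{\chi} \G_m\Big],$$
obtained by contracted product. The universal torsor $\stackT$ has $\tau_{\stackT} = \id_{\Pic(V)}$ by construction. Since we are in the split case, Cartier duality identifies group morphisms $\varphi : T_{\NS} \to T$ with morphisms $\hat\varphi : \hat T \to \Pic(V)$ between their character groups. If a $\varphi$-equivariant morphism $\Phi : \stackT \to P$ sending $y_0$ to $p_0$ exists, then contracted product along any $\chi \in \hat T$ identifies $\stackT \overset{T_{\NS}}{\times}_{\chi \circ \varphi} \G_m$ with $P \overset{T}{\times}_\chi \G_m$, so $\hat\varphi$ is forced to equal $\tau_P$; this determines $\varphi$ uniquely. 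We therefore define $\varphi$ by the requirement $\hat\varphi = \tau_P$.

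For the existence of $\Phi$, set $P' := \stackT \overset{T_{\NS}}{\times}_\varphi T$, a $T$-torsor over $V$ with marked point $[y_0, 1_T]$ lying above $x_0$. Functoriality of the type under contracted product gives $\tau_{P'} = \hat\varphi = \tau_P$. The Colliot-Thélène--Sansuc style exact sequence recalled above, applied to the torus $T$, then shows that $[P] - [P']$ in $H^1(V, T)$ is the pull-back of a class in $H^1(\Spec\Q, T)$; restricting to the fiber over the rational point $x_0$, the pointings $p_0$ and $[y_0, 1_T]$ trivialize both torsors there, so this pull-back vanishes and $[P] = [P']$ in $H^1(V, T)$. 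Hence $P$ and $P'$ are isomorphic as $T$-torsors over $V$. Any two isomorphisms $P' \to P$ sending $[y_0, 1_T]$ to $p_0$ differ by an automorphism of $P$, that is by multiplication by some $s \in T(V) = T(\Q)$ (using that $V$ is proper and geometrically integral over $\Q$) fixing $p_0$; thus $s = 1$, and the pointed isomorphism $P' \simeq P$ is unique. Composing it with the canonical contraction $\stackT \to P'$, $y \mapsto [y, 1_T]$, produces the desired $\Phi$, which is $\varphi$-equivariant by construction and maps $y_0$ to $p_0$.

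Uniqueness of $\Phi$ for this $\varphi$ follows by the same rigidity: two $\varphi$-equivariant morphisms $\stackT \to P$ sending $y_0$ to $p_0$ descend, via the contraction $\stackT \to P'$, to two pointed isomorphisms $P' \to P$, which must coincide. I expect the delicate point to be the use of the exact sequence for an arbitrary torus $T$ rather than for $T_{\NS}$; this is handled by the functoriality of the entire Colliot-Thélène--Sansuc sequence along $\varphi : T_{\NS} \to T$, which reduces the required injectivity (modulo $H^1(\Spec\Q, T)$) of the type map for $T$ to the analogous statement for $T_{\NS}$ already used in the construction of $\stackT$.
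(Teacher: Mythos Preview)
The paper does not actually prove this proposition; it simply states it, prefaced by a reference to \cite[remark 4.19]{Peyre_beyond_height}. Your argument is correct and is precisely the standard route: determine $\varphi$ from the type $\tau_P$, form the contracted product $P' = \stackT \overset{T_{\NS}}{\times}_\varphi T$, use the Colliot-Th\'el\`ene--Sansuc exact sequence together with the pointings over $x_0$ to identify $P' \simeq P$, and conclude uniqueness from $T(V) = T(\Q)$.

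One small comment on the point you flag as ``delicate'': the exact sequence
\[
0 \rightarrow H^1(S,T) \rightarrow H^1(V,T) \rightarrow \Hom_{\grp}(\hat T,\Pic_{V/S}) \rightarrow H^2(S,T)
\]
is stated in \cite[2.0.2]{colliot_sansuc_descente_variete_II} for an arbitrary group of multiplicative type $T$, not only for $T_{\NS}$; the paper's display is just the special case needed to define universal torsors. So you can invoke it for $T$ directly, without the functoriality reduction you sketch at the end.
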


We recall the construction of Peyre \cite[construction 4.27]{Peyre_beyond_height} to define the local height associated to the universal torsor. Suppose we have a given system of heights $$s : \Pic(V) \rightarrow \mathcal{H}(V) .$$We fix a place $v_0 \in M_{\Q}$. For any line bundle $L$, we have by the previous proposition a unique morphism $\Phi_L : \stackT \rightarrow L^{\times}$ which is equivariant for the group morphism $\chi_L : T_{N S} \rightarrow \G_m$ given by $[L] \in X^*(T_{NS}) = \Pic(V)$. This morphism is unique up to multiplication by a scalar $\lambda \in \Q^{\times}$.

For each $[L]$ in $\Pic(V)$, we choose a representative $(||-||_v)$ of the adelic norm associated to $s([L])$. Then we define for any $v \in M_{\Q}$ the map:

\begin{align*}
    ||-||_{L,v} : \stackT&(\Q_v) \rightarrow \R \\
    &y \longmapsto \frac{||\phi_L(y)||_v}{||\phi_L(y_0)||_v} \text{  si } v \neq v_0 \\
    &y \longmapsto \frac{||\phi_L(y)||_{v_0}}{||\phi_L(y_0)||_{v_0}} H_L(q(y_0))^{-1}
\end{align*}

\begin{definition}\label{definition_local_height}
    The local height in the place $v \in M_{\Q}$ associated to the universal torsor $\stackT$ is defined on $\stackT(\Q_v)$ by:
        $$H_{\stackT,v}(y,L) := ||y||_{L,v}^{-1} .$$
    We also define a local logarithm height: $$h_{\stackT,v} : \stackT(\Q_v) \rightarrow \Pic(V)^{\vee}_{\R} $$such that for any $L \in \Pic(V)$, we have:
    $$ H_{\stackT,v}(y,L) = q_v^{\langle h_{\stackT,v}(y) , L \rangle } .$$
\end{definition}

In particular, we have for any $L \in \Pic(V)$ and for any $y \in \stackT(\Q)$,
\begin{equation}\label{local_height_global_height}
     H_L(q(y)) = \prod\limits_{v \in M_{\Q}} H_{\stackT,v}(y,L) .
\end{equation}
In order to describe one of the key property of this local logarithm height, we need to recall the definition of the local logarithm map associated to a torus.

 \begin{definition}
    Let $T$ be a split torus over $\Q$, we denote $M = X^*(T)$ the lattice corresponding to the character group of $T$ and by $N = X_*(T)$ its dual for the usual pairing. For $v \in M_{\Q}$, we have a natural map:
    $$\log_{T,v} : T({\Q}_v) \rightarrow N_{\R}$$such that for any $\chi \in M$, for any $t \in T({\Q}_v)$, we have: $$ \langle \log_{T,v}(t) , \chi \rangle := - \log_{q_v} \left| \chi(t) \right|_v .$$
 \end{definition}

With our convention, we can rewrite the following lemma (\cite[lemma 4.30]{Peyre_beyond_height}):

\begin{lemma}\label{hauteur_locale_torseur_univ_et_action_de_tns}
    For any $t \in T_{\NS}({\Q}_v)$, for any $y \in \stackT({\Q}_v)$,
    $$h_{\stackT,v}(t.y) = \log_{T_{\NS},v}(t) + h_{\stackT,v}(y) .$$
\end{lemma}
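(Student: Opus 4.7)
The plan is to unpack both sides of the claimed identity through the definitions and reduce everything to the basic fact that a character $\chi_L$ interacts with the adelic norm on $L$ by multiplication by $|\chi_L(t)|_v$. Since $h_{\stackT,v}$ takes values in $\Pic(V)^{\vee}_{\R}$, it suffices to test the equality against an arbitrary line bundle class $L \in \Pic(V)$.

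First I would fix $L \in \Pic(V)$ and use the equivariance of the canonical morphism $\Phi_L : \stackT \to L^{\times}$ from the preceding proposition: for any $t \in T_{\NS}(\Q_v)$ and any $y \in \stackT(\Q_v)$ one has $\Phi_L(t.y) = \chi_L(t) \cdot \Phi_L(y)$, where the right-hand side uses the $\G_m$-action on $L^{\times}$. Because the adelic norm $||-||_v$ restricts to a norm on each fiber of $L$, this yields
$$||\Phi_L(t.y)||_v = |\chi_L(t)|_v \cdot ||\Phi_L(y)||_v.$$
Substituting into the definition of $||-||_{L,v}$ (and noting that for $v = v_0$ the extra factor $H(L,q(y_0))^{-1}$ does not depend on $y$ or $t$, so it cancels in the ratio) gives
$$H_{\stackT,v}(t.y,L) = |\chi_L(t)|_v^{-1} \cdot H_{\stackT,v}(y,L).$$

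Next I would translate the scalar $|\chi_L(t)|_v$ using the definition of the local logarithm $\log_{T_{\NS},v}$. Under the identification $X^*(T_{\NS}) = \Pic(V)$, the character $\chi_L$ corresponds precisely to the class $[L]$, so
$$|\chi_L(t)|_v = q_v^{\langle \log_{T_{\NS},v}(t),\, L\rangle}.$$
Combining this with the previous displayed equation and rewriting both sides in the form $q_v^{-\langle \cdot,\, L\rangle}$ via the definition of the local logarithmic height gives
$$\langle h_{\stackT,v}(t.y),\, L\rangle = \langle \log_{T_{\NS},v}(t),\, L\rangle + \langle h_{\stackT,v}(y),\, L\rangle.$$
Since this holds for every $L \in \Pic(V)$, the identity $h_{\stackT,v}(t.y) = \log_{T_{\NS},v}(t) + h_{\stackT,v}(y)$ in $\Pic(V)^{\vee}_{\R}$ follows.

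There is no real obstacle here: the whole argument is a matter of carefully threading through the definitions. The only point that requires a moment's care is the treatment of the distinguished place $v_0$, where the normalisation of $||-||_{L,v_0}$ involves the extra global factor $H_L(q(y_0))^{-1}$; one has to check that this factor depends only on the base point $y_0$ and not on $y$, so that it cancels between the numerator and denominator when comparing $||\Phi_L(t.y)||_{v_0}$ to $||\Phi_L(y)||_{v_0}$. The other mild subtlety is the compatibility of the sign conventions in the definitions of $H_{\stackT,v}$, $h_{\stackT,v}$ and $\log_{T_{\NS},v}$, which I would verify once at the outset to make sure the signs line up correctly in the final equality.
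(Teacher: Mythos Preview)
Your proof is correct: it is exactly the natural unwinding of the definitions through the equivariance of $\Phi_L$ and the definition of $\log_{T_{\NS},v}$. The paper does not give its own proof of this lemma but merely cites \cite[lemme 4.30]{Peyre_beyond_height}; your argument is precisely the routine verification one expects behind that citation, so there is nothing to compare.
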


\subsection{Universal torsors for split, smooth and projective toric varieties}\label{subsection_univ_torsor_toric_varieties}

We keep the notations from \ref{notations_varietes_toriques}. Let $X$ be a toric variety over $\Q$ (smooth, proper and split) and denote by $\Sigma$ an associated fan. In our particular case, the universal torsor $\stackT_{\Sigma} \rightarrow X$ can be described as an open subset of $\affine^{\Sigma(1)}$ obtained by cutting out a certain closed subset $Z(\Sigma)$.

As in \cite[chapter 5, §1]{CoxLittleSchenck2011}, we define for a cone of the fan $\sigma \in \Sigma$, the monomial:
$$ X^{\Tilde{\sigma}} = \prod\limits_{\rho \not\in \sigma(1)} X_{\rho} .$$Then we shall use the following definition of the exceptional set $Z(\Sigma)$:

\begin{definition}
    We denote by $B(\Sigma)$ the ideal of $\Z[X_{\rho},\rho \in \Sigma(1)]$ generated by the monomials $X^{\Tilde{\sigma}} = \prod\limits_{\rho \not\in \sigma(1)} X_{\rho}$ where $\sigma$ belongs to $\Sigma_{\max}$ the set of cones of the fan $\Sigma$ of maximal dimension. Then we define the closed subset of $\affine^{\Sigma(1)}$ as:
    \vspace{0,1 cm}
    $$Z(\Sigma) =  V(B(\Sigma)) = \bigcap\limits_{\sigma \in \Sigma_{\max}} \left( \bigcup\limits_{\rho \not\in \sigma(1)} \left( X_{\rho} = 0 \right) \right).$$We define $\stackT_{\Sigma} = \affine^{\Sigma(1)} - Z(\Sigma)$.
\end{definition}In particular, this means that for $y \in \Z^{\Sigma(1)}- \{0\}$, we have the following equivalence:
$$y \in \stackT_{\Sigma}(\Z) \Leftrightarrow \gcd(y^{\Tilde{\sigma}}, \sigma \in \Sigma_{\max}) = 1 .$$

\begin{remark}
    There is a more intuitive description of the universal torsor $\stackT_{\Sigma} \rightarrow X$  as an open subset of $\affine^{\Sigma(1)}$ obtained by cutting out a certain closed subset which can be described in term of primitive collections which are defined as follows (see \cite[definition 5.1.5]{CoxLittleSchenck2011}):

\begin{definition}\label{def_primitive_collection}
    A subset $C \subset \Sigma(1)$ is a primitive collection if:
\begin{itemize}
    \item $C \not\subset \sigma(1)$ for any $\sigma \in \Sigma$
    \item for every proper subset $C' \subsetneq C$, there exists $\sigma \in \Sigma$ such that $C' \subset \sigma(1)$
\end{itemize}
\end{definition}

Hence $Z(\Sigma)$ can be described as a union of irreducible components as in \cite[proposition 5.1.6]{CoxLittleSchenck2011}:
\begin{prop}\label{description_torseur_univ_primitive_collection}
    \vspace{0,1 cm}
    $$Z(\Sigma) = \bigcup\limits_{C} V( x_{\rho} \mid \rho \in C ) $$where $C$ belongs to the set of primitive collections.
\end{prop}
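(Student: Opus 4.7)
The plan is to identify both sides as subsets of $\affine^{\Sigma(1)}$ by describing which points they contain. For any point $y = (y_\rho)_{\rho \in \Sigma(1)}$ of $\affine^{\Sigma(1)}$ (over an algebraic closure), introduce the combinatorial data
$$ C(y) = \{\rho \in \Sigma(1) \mid y_\rho = 0\} . $$
I would then translate the two sides of the claimed equality into conditions on $C(y)$:
$y \in Z(\Sigma)$ if and only if, for every $\sigma \in \Sigma_{\max}$, the monomial $X^{\tilde\sigma}$ vanishes at $y$, which means some $\rho \notin \sigma(1)$ satisfies $y_\rho = 0$, i.e.\ $C(y) \not\subset \sigma(1)$. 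Since every cone of $\Sigma$ is a face of some maximal cone, this is equivalent to $C(y) \not\subset \sigma(1)$ for every $\sigma \in \Sigma$. On the other hand, $y$ belongs to $\bigcup_C V(x_\rho \mid \rho \in C)$ precisely when there is a primitive collection $C \subset C(y)$.

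The proposition thus reduces to a purely combinatorial equivalence: a subset $D \subset \Sigma(1)$ is contained in no $\sigma(1)$ if and only if $D$ contains a primitive collection. The implication ``$\Leftarrow$'' is immediate from the first defining property of a primitive collection, since inclusion in some $\sigma(1)$ is hereditary on subsets. For ``$\Rightarrow$'', I would pick $C \subset D$ minimal (for inclusion) among subsets of $D$ not contained in any $\sigma(1)$; this $C$ exists because $D$ itself has the property, and is nonempty since the empty set is contained in every $\sigma(1)$ vacuously. By minimality, every proper subset $C' \subsetneq C$ is contained in some $\sigma(1) \in \Sigma$, which is exactly the second condition in Definition \ref{def_primitive_collection}, so $C$ is a primitive collection.

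Finally, because both $B(\Sigma)$ and the monomial ideals $(x_\rho \mid \rho \in C)$ are squarefree monomial ideals, hence radical, the equality of the underlying sets of points implies the scheme-theoretic equality $Z(\Sigma) = \bigcup_C V(x_\rho \mid \rho \in C)$. The only genuinely substantive step is the minimality argument producing a primitive collection inside $C(y)$; the rest is bookkeeping between the monomial generators $X^{\tilde\sigma}$ of $B(\Sigma)$ and the combinatorics of $\Sigma$.
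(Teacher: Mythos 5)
Your argument is correct and complete. The paper itself does not give a proof of this proposition but simply cites \cite[proposition 5.1.6]{toric_varieties_cox}; the combinatorial reduction you give (translating membership of a point $y$ into the condition on $C(y) = \{\rho : y_\rho = 0\}$, then extracting a minimal subset of $C(y)$ not contained in any $\sigma(1)$ to produce a primitive collection) is precisely the standard argument found there, so your approach coincides with the one the paper relies on. The observation that maximal cones suffice because $\sigma(1) \subset \tau(1)$ when $\sigma \preceq \tau$, and the closing remark on squarefree monomial ideals being radical, are both fine and handle the passage from generators of $B(\Sigma)$ to the set-theoretic (indeed scheme-theoretic) identity cleanly.
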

\end{remark}

Let $\beta : \Div_T(X)^{\vee} = \Z^{\Sigma(1)} \rightarrow N$ be the $\Z$-linear map which sends the vector of the canonical basis $e_{\rho}$ to $u_{\rho}$ the minimal generator of the edge $\rho$. Denote by $\Sigma_0$ the fan of $\stackT_{\Sigma}$ (see \cite[proposition 5.19]{CoxLittleSchenck2011}). Both toric varieties $\stackT_{\Sigma}$ and $X$ are defined over $\Z$ via their fans. We state here the following essential proposition:

\begin{prop}\label{universal_torsor_over_toric_variety}
    The natural quotient morphism $q : \stackT_{\Sigma} \rightarrow X$ over $\Z$ where $T_{\NS}$ acts on the coordinate associated to $\rho \in \Sigma(1)$ by multiplication by $[D_{\rho}](t)^ {-1}$ makes $\stackT_{\Sigma}$ the universal torsor (unique up to isomorphism) over $X$.
\end{prop}

\begin{proof}
    See \cite[Theorem 5.9]{bongiorno_toric_stacks}.
\end{proof}

We shall denote by $T_{\Sigma(1)}$ the open torus of the toric variety $\stackT_{\Sigma}$, which is naturally isomorphic to $\G_m^{\Sigma(1)}$ via the character 
    \begin{align*}
        \chi_{D_{\rho}} : \ &T_{\Sigma(1)} \rightarrow \G_m \\
        &y \longmapsto y_{\rho} .
    \end{align*}Note that we have $q^{-1}(T) = T_{\Sigma(1)}$.
Recall we have the following exact sequence associated to the toric variety $X$:

$$ 0 \rightarrow M \xrightarrow{\beta^*} \Div_T(X) \xrightarrow{\pi} \Pic(X) \rightarrow 0 .$$To an effective divisor $D = \sum\limits_{\rho \in \Sigma(1)} a_{\rho} D_{\rho}$, we associate the global section $X^D$ in the Cox Ring $\Q[X_{\rho} | \rho \in \Sigma(1) ]$ of the toric variety $X_{\Sigma}$ defined by: $$X^D = \prod\limits_{\rho \in \Sigma(1)} X_ {\rho}^{a_{\rho}} .$$

%voir aussi la citation de Demazure dans Salberger pour généraliser aux champs toriques

\begin{lemma}\label{application_of_demazure_kleiman_criterion}
    There exists a basis $L_1,..,L_t$ of $\Pic(X)$ such that each $L_i$ is a very ample line bundle.
\end{lemma}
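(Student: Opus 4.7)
The plan is to exhibit the basis by starting from a primitive element of the ample cone and then perturbing a complementary $\Z$-basis into the ample cone while keeping the change of basis unimodular. Kleiman's criterion \cite[theorem 1.27]{debarre2001higher} tells us that the ample cone $\mathrm{Amp}(X) \subset \Pic(X)_{\R}$ is open, and since $X$ is projective it is non-empty. Using density of $\Q^t$ in $\R^t$ and clearing denominators, $\mathrm{Amp}(X) \cap \Pic(X)$ is non-empty; moreover, if such a lattice element factors as $v = m v'$ with $v' \in \Pic(X)$ primitive and $m \geqslant 1$, then $v' = v/m$ still lies in $\mathrm{Amp}(X)$ since the ample cone is stable under positive scalar multiplication. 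So we may fix a primitive $v_1 \in \mathrm{Amp}(X) \cap \Pic(X)$.

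Being primitive, $v_1$ extends to a $\Z$-basis $(v_1, e_2, \ldots, e_t)$ of $\Pic(X)$. For a positive integer $N$, set $L_1 = v_1$ and $L_i = N v_1 + e_i$ for $2 \leqslant i \leqslant t$. Since $\frac{1}{N} L_i = v_1 + \frac{1}{N} e_i$ tends to $v_1 \in \mathrm{Amp}(X)$ as $N \to \infty$ and $\mathrm{Amp}(X)$ is open, $\frac{1}{N} L_i$ lies in $\mathrm{Amp}(X)$ for $N$ large, hence so does $L_i$ by the cone property. Taking $N$ large enough for all $i$ simultaneously yields ample $L_1, \ldots, L_t$; the transition matrix from $(v_1, e_2, \ldots, e_t)$ to $(L_1, \ldots, L_t)$ is unitriangular (ones on the diagonal, $N$'s in the rest of the first column, zeros elsewhere), so $(L_1, \ldots, L_t)$ is again a $\Z$-basis of $\Pic(X)$.

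Finally, smoothness of $X$ together with \cite[theorem 6.1.16]{toric_varieties_cox} upgrades each ample $L_i$ to a very ample line bundle, giving the desired basis. The only real subtlety is preserving unimodularity of the change of basis while perturbing into the open cone: naively adding a common large multiple of a fixed ample bundle to every element of an arbitrary basis would generally distort the determinant, which is why we single out a primitive ample vector $v_1$ and keep it as the first basis element.
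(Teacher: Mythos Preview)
Your proof is correct and follows the same two-step strategy the paper sketches just before the lemma: use Kleiman's criterion to produce a $\Z$-basis of $\Pic(X)$ consisting of ample classes, then invoke \cite[theorem 6.1.16]{toric_varieties_cox} to upgrade ample to very ample on a smooth toric variety. The paper merely asserts the first step, whereas you actually carry it out via the primitive-vector-plus-unitriangular-perturbation argument; this is a genuine improvement in detail but not a different route.
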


\begin{proof}
    Using Kleiman's criterion (see \cite[theorem 1.27]{debarre2001higher}), we can choose $(L_1,..,L_t)$ a basis of $\Pic(X)$ where each $L_i$ is an ample line bundle. Moreover by \cite[theorem 6.1.16]{CoxLittleSchenck2011}, because $X$ is smooth, $L_i$ ample implies $L_i$ very ample. We have proven the lemma.
\end{proof}

We choose such basis. For $1 \leqslant k \leqslant t$, we set $h^0(X,L_k) = r_k$ and we take $\mathcal{M}_k  = \pi^{-1}(L_k) \cap \N^{\Sigma(1)} \subset \Div_T(X)$. Thanks to the description of the Cox Ring of the toric variety $X$, it is a family of $T$-invariant divisor such that:

\begin{equation}\label{base_section_globale}
H^0(X,L_k) = \bigoplus\limits_{D \in \mathcal{M}_k} \Q. X^D .
\end{equation}

We have the following lemma:

\begin{lemma}\label{catesian_diagram_very_ample_basis}
    The closed immersion given by the very ample line bundles:
    $$X_{\Q} \rightarrow \prod\limits_{i = 1}^t \Proj^{r_i-1}_{\Q}    $$is in fact defined over $\Z$ and we have the following commutative diagram which is cartesian:
    \begin{center}
\begin{tikzcd}
\stackT_{\base} \arrow[r] \arrow[d] & \prod\limits_{i = 1}^t \affine^{r_i} \setminus \{ 0 \} \arrow[d] \\
X \arrow[r]                     & \prod\limits_{i = 1}^t \Proj^{r_i-1}          
\end{tikzcd}
    \end{center}where $\base = (L_1,..,L_t)$ and the horizontal maps are toric morphism.   
\end{lemma}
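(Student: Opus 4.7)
The plan is to split the statement into its two assertions and treat each by standard Cox-ring/toric arguments. For the fact that the closed immersion descends to $\Z$, I would use that $X = X_\Sigma$ is already a scheme over $\Z$ via its fan, and that for each very ample line bundle $L_i$ provided by Lemma \ref{application_of_demazure_kleiman_criterion}, the global sections $H^{0}(X_{\Z}, L_i)$ form a free $\Z$-module with basis $\{X^D : D \in \mathcal{M}_i\}$, where $\mathcal{M}_i = \pi^{-1}(L_i) \cap \N^{\Sigma(1)}$. This is the standard description of global sections of line bundles on smooth split toric varieties via monomials in the Cox ring $\Z[X_\rho : \rho \in \Sigma(1)]$. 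Because these integral sections globally generate $L_i$ on $X_{\Z}$, they define a morphism $X_{\Z} \to \Proj^{r_i - 1}_{\Z}$ over $\Z$, which is a closed immersion since the property can be checked on the generic fibre (where $L_i$ is very ample by construction), the source being proper over $\Z$.

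For the cartesian diagram I would reduce to the case $t = 1$ first. For each single $i$, the closed immersion $\iota_i : X \to \Proj^{r_i - 1}$ satisfies $\iota_i^{*}\sheafO(1) \simeq L_i$, and the chosen basis of sections induces a $\G_m$-equivariant morphism $L_i^{\times} \to \affine^{r_i} \setminus \{0\}$ sending a trivialisation of the fibre $L_{i,x}$ to the $r_i$-tuple of evaluations of the sections $X^D$, $D \in \mathcal{M}_i$, in this trivialisation. The resulting square
\begin{center}
\begin{tikzcd}
L_i^{\times} \arrow[r] \arrow[d] & \affine^{r_i} \setminus \{0\} \arrow[d] \\
X \arrow[r, "\iota_i"'] & \Proj^{r_i - 1}
\end{tikzcd}
\end{center}
is cartesian, because both vertical arrows are $\G_m$-quotients and the top $\G_m$-equivariant map realises $L_i^{\times}$ precisely as the pullback of the tautological torsor. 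Taking the fibre product of these $t$ squares over $X$ on the left and the product on the right (which is legitimate because the bottom map factors through the diagonal $X \to X^t$) gives $\stackT_{\base} = L_1^{\times} \times_X \cdots \times_X L_t^{\times}$ on top, yielding the desired cartesian square.

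The main obstacle I expect is the book-keeping of the conventions distinguishing a line bundle from its geometric total space and the latter from its dual — specifically, verifying under the paper's normalisation (``sections of $L$ correspond to $\sheafL^{\vee}$'') that the pullback of the tautological $\G_m$-torsor on $\Proj^{r_i - 1}$ is $L_i^{\times}$ rather than its inverse, and that the resulting identification of $\stackT_{\base}$ with this pullback is compatible with the normalisation $\chi([\stackT_{\base}]) = \id_{\Pic(X)}$ defining the universal torsor.
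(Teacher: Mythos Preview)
The paper states this lemma without proof, so there is no argument to compare your proposal against directly. Your outline is the standard one and is essentially correct: the monomial sections $X^D$, $D\in\mathcal{M}_i$, lie in the Cox ring $\Z[X_\rho:\rho\in\Sigma(1)]$ and generate $L_i$ over the integral model $X_\Sigma$, so they define a morphism $X\to\Proj^{r_i-1}$ over $\Z$; and for the cartesian square, pulling back the tautological $\G_m$-torsor $\affine^{r_i}\setminus\{0\}\to\Proj^{r_i-1}$ along $\iota_i$ gives $L_i^{\times}$, whence the fibre product over $X$ of these $t$ pullbacks is exactly $\stackT_{\base}=L_1^{\times}\times_X\cdots\times_X L_t^{\times}$.

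One small point of caution: your justification that the map over $\Z$ is a closed immersion ``because this can be checked on the generic fibre, the source being proper'' is not valid as stated---properness alone does not let you propagate ``closed immersion'' from the generic fibre to the whole base. What does work here is the toric/combinatorial fact that very ampleness of $L_i$ on a smooth projective split toric variety is a condition on the lattice polytope, hence holds uniformly over every residue field of $\Spec\Z$; thus the map is a closed immersion fibrewise, and since $X\to\Spec\Z$ is flat and proper one concludes. Note, however, that the lemma as phrased only asserts that the morphism is \emph{defined} over $\Z$ and that the square is cartesian, which is all that is used downstream (Theorem~\ref{description_universal_torsor_toric_varieties}); so even without the closed-immersion refinement your argument already covers what is needed.
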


\begin{proof}
    Let us denote by $A_k \in M_{r_k,m}(\Z) $ the matrix whose lines are given by the coordinates of $D \in \mathcal{M}_k$ in the basis $\{ D_{\rho} | \rho \in \Sigma(1) \}$. The $\Z$-linear map given by the matrix $\Bigg[\begin{smallmatrix}
&A_1\\
&\vdots \\
&A_t
\end{smallmatrix}\Bigg]$, that is to say:
   $$ \Phi : \Z^{\Sigma(1)}\rightarrow \bigoplus\limits_{1 \leqslant i \leqslant t} \Z^{r_i}$$
  induces the toric morphism of toric varieties over $\Z$:
   $$ \mathcal{T}_{\Sigma} \rightarrow \prod\limits_{i = 1}^t \affine^{r_i} \setminus \{ 0 \}$$
   which corresponds to the cartesian diagram \ref{catesian_diagram_very_ample_basis}. Moreover, the induced morphism by quotient $$ X \rightarrow \prod\limits_{i = 1}^t \Proj^{r_i-1}$$is again a toric morphism of toric varieties.
\end{proof}

\subsubsection{An alternative description of the system of heights over toric varieties}

Recall that we have equipped the toric variety $X$ with its natural system of heights as in convention \ref{convention_adelic_norm_toric_variety}. We fix the point $1 = (1)_{\rho \in \Sigma(1)}$ of $\stackT(\Q)$ and $q(1) = 1 \in X(\Q)  $. In this paragraph, we shall explain how to lift the height of a rational point $P \in X(\Q)$ to the universal torsor, that is to say, we shall prove the following theorem:

\begin{theorem}\label{simplification_of_the_height_to_univ_torsor}
    Let $y \in \stackT(\Z)$ and $P = q(y) \in X(\Q)$. We have:
    $$ h(P) = h_{\stackT,\infty}(y) .$$
\end{theorem}

\begin{prop}\label{fonctorialite_systeme_hauteur}
   The system of heights $s_X : \Pic(X) \rightarrow \mathcal{H}(X)$ used in \cite{salberger_torsor} on a toric variety $X$ is functorial for the morphisms of split toric varieties.

    That is to say, for $X \xrightarrow{f} Y$ a toric morphism of split toric varieties (i.e. compatible with the actions of the tori), the following diagram is commutative:
    \begin{center}
\begin{tikzcd}
\mathcal{H}(Y) \arrow[r, "f^*"]     & \mathcal{H}(X)    \\
\Pic(Y) \arrow[r, "f^*"'] \arrow[u, "s_Y"] & \Pic(X) \arrow[u, "s_X"]
\end{tikzcd}
\end{center}

\end{prop}

\begin{proof}
For a split toric variety $X$, we denote by $T_{\NS,X}$ its Néron–Severi torus and by $\stackT_X$ its universal torsor, as defined previously. 
According to \cite[section 3]{geraschenko_satriano_stacky_fan}, a toric morphism 
$$X \xrightarrow{f} Y$$ 
between toric varieties lifts to a morphism of toric varieties 
$$\stackT_{X} \xrightarrow{\Tilde{f}} \stackT_{Y}$$ 
which is equivariant with respect to a morphism of tori $T_{\NS,X} \rightarrow T_{\NS,Y}$. 

In \cite[theorem 9.5]{salberger_torsor}, Salberger constructs a canonical section 
$$X(K_v) \xrightarrow{\check{\psi}_X} \stackT_X(K_v) / T_{\NS,X}(K_v)_{\mathrm{cp}}$$ 
that, by construction, behaves functorially in $X$ with respect to toric morphisms. 
Here, $T_{\NS,X}(K_v)_{\mathrm{cp}}$ denotes the maximal compact subgroup of $T_{\NS,X}(K_v)$. 

The proposition then follows from the fact that $T(K_v)$ is dense in $X(K_v)$ and that, according to \cite[proposition 9.7]{salberger_torsor}, we have the equality 
$$||1(P)||_D = | \check{\psi}(P) |_D,$$ 
where the definition of $|-|_D$ is given in \cite[notations 9.6]{salberger_torsor}. 
Indeed, for a divisor $D$ on $Y$, we have
$$||1(f(P))||_D =  | \check{\psi_Y}(f(P)) |_D = | \overline{f}(\check{\psi_X}(P)) |_D = | \check{\psi_X}(P) |_{f^* D} = ||1(P)||_{f^*D}.$$

\end{proof}

We can describe the local height at the universal torsor $\stackT_{\Sigma}$. We use the functorial property \ref{fonctorialite_systeme_hauteur} of the notion of system of heights for toric morphism and we get that for any $v \in M_{\Q}$ and any $k \in \{ 1,..,t \}$ :
$$ H_{\stackT,v}(y,L_k) = \max\limits_{D \in \mathcal{M}_k} |X^D(y)|_v .$$

In fact using the description of the universal torsor given by the cartesian diagram \ref{catesian_diagram_very_ample_basis}, we can deduce an other description of the $\Z$-points of $\stackT_{\Sigma}$:

\begin{prop}\label{description_universal_torsor_toric_varieties}
    \ \vspace{0,1 cm}
    $$ \stackT_{\Sigma}(\Z) = \{ y \in \Z^{\Sigma(1)} \cap  \stackT_{\Sigma}(\Q)  | \forall \ i \in \{1,..,t \}, \ \gcd\limits_{D \in \mathcal{M}_i}(X^{D}(y)) = 1 \} .$$
    In particular, for any $y \in \stackT_{\Sigma}(\Z)$, we have for $1 \leqslant i \leqslant t$,
    $$ H_{L_i}(q(y)) = \max\limits_{D \in \mathcal{M}_i}(|X^{D}(y)|) = H_{\stackT,\infty}(y,L_i) .$$
\end{prop}
    
As $L_1,..,L_t$ form a basis of $\Pic(X)$, we have proven theorem \ref{simplification_of_the_height_to_univ_torsor}.

\subsubsection{The action of the Neron-Severi torus on each coordinate}\label{paragraph_action_Tns_coordonee}

In this paragraph, we shall describe the action of $T_{\NS}$ on $ \stackT_{\Sigma}$ seen as the universal torsor (see proposition \ref{universal_torsor_over_toric_variety}). It is induced by the opposite action of the one given by the group morphism $\pi^* : T_{\NS} \rightarrow T_{\Sigma(1)} $, dual to the morphism $\pi : \Div_T(X) \rightarrow \Pic(X)$ which maps $D$ to $[D]$.

We shall use the exponential group morphism:
\begin{align*}
    e^* : \Pic(X)&^{\vee}_{\R} \hookrightarrow \T_{\NS}(\R) \\
    &a \longmapsto e^a
\end{align*}such that for any $L \in \Pic(X)$, $\chi_L(e^a) = e^{a(L)} $ where $\chi_L \in X^*(T_{\NS})$ is the character associated to $L \in \Pic(X)$.

\begin{prop}\label{element_important_du_tore_NS}
    Let $a \in \Pic(X)^{\vee}_{\R}$. For any $\rho \in \Sigma(1)$, the action of $e^{a} \in T_{\NS}(\R)$ on the coordinate of $\stackT_{\Sigma}(\R)$ corresponding to the edge $\rho$ is given by multiplication by $e^{- \langle [D_{\rho}],a\rangle}$.
\end{prop}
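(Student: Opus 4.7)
The plan is to chain together two facts that are already established in the paragraph immediately preceding the statement. First, from the discussion of how $T_{\NS}$ acts on $\stackT_\Sigma \subset \affine^{\Sigma(1)}$ via $\pi^* : T_{\NS} \to T_{\Sigma(1)}$, one has for any scheme $S$ and any $t \in T_{\NS}(S)$ that $t$ acts on the $\rho$-coordinate by multiplication by $\chi_{D_\rho}(\pi^*(t))$, and by the identity $\chi_{D_\rho} \circ \pi^* = \chi_{[D_\rho]}$ this equals $\chi_{[D_\rho]}(t)$. Second, the exponential map $\exp : \Pic(X)^\vee_\R \hookrightarrow T_{\NS}(\R)$ is defined precisely so that $\chi_L(e^a) = e^{a(L)}$ for every $L \in \Pic(X)$.

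Specializing the second identity to $L = [D_\rho]$, one obtains
$$\chi_{[D_\rho]}(e^a) = e^{a([D_\rho])} = e^{\langle [D_\rho], a \rangle},$$
and substituting into the first fact with $S = \Spec(\R)$ and $t = e^a$ gives the claim: the action of $e^a$ on the $\rho$-coordinate is multiplication by $e^{\langle [D_\rho], a \rangle}$.

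In short, there is no real obstacle; the proposition is a direct unwinding of the definitions of $\exp$, of the duality between $\pi$ and $\pi^*$, and of the toric action of $T_{\Sigma(1)}$ on $\affine^{\Sigma(1)}$ by coordinate-wise characters $\chi_{D_\rho}$. The only point one has to be careful about is the sign/direction convention: making sure that the character of $T_{\NS}$ associated to $L \in \Pic(X) = X^*(T_{\NS})$ is indeed $\chi_L$ (not $\chi_L^{-1}$) and that the dual map $\pi^*$ sends the character $\chi_{[D]}$ of $T_{\NS}$ to the character $\chi_D$ of $T_{\Sigma(1)}$; once these conventions are fixed consistently (as done in the preceding paragraph), the computation above is immediate.
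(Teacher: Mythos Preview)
Your proposal is correct and matches the paper's approach exactly: the paper does not even write out a separate proof, but simply states that the proposition follows from the two facts in the preceding paragraph (that $t \in T_{\NS}(S)$ acts on the $\rho$-coordinate by $\chi_{[D_\rho]}(t)$, and that $\chi_L(e^a) = e^{a(L)}$ by definition of $\exp$). Your unwinding of these definitions is precisely the intended argument.
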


\begin{proof}
    To any $T$-invariant divisor $D$, we can associate a character $\chi_{D} : T_{\Sigma(1)} \rightarrow \G_m$ which maps $y$ to $X^D(y)$. The composition $$\chi_{D} \circ \pi^* : T_{\NS} \rightarrow \G_m$$ corresponds to the character $\chi_{[D]}$ where $[D] = \pi(D) \in \Pic(X)$ is the class of $D \in \Div_T(X)$.
    Hence for any $\rho \in \Sigma(1)$, following proposition \ref{universal_torsor_over_toric_variety}, the action of $t \in T_{\NS}(\R)$ on the coordinate of $\stackT_{\Sigma}(\R)$ corresponding to the edge $\rho$ is given by multiplication by $\chi_{[D_{\rho}]}(t)^{-1}$ as $\chi_{D_{\rho}} \circ \pi^* = \chi_{[D_{\rho}]}$.
\end{proof}

\subsubsection{A natural covering of the real variety associated to our toric variety}

In this paragraph, $X(\R)$ is seen as a real manifold, endowed of its associated topology. Then we shall prove the following result on the local height $h_{\stackT,\infty}$ (see definition \ref{definition_local_height}):

\begin{prop}
    The restriction map given by $h_{\stackT,\infty}^{-1}(\{0\}) \rightarrow X(\R)$ is a finite covering.
\end{prop}

\begin{proof}
    Let $P \in X(\R)$ and let $U$ be an open subscheme of $X_{\R}$ such that $P \in U(\R)$ and that there exists a section $s : U \rightarrow \stackT_{\R}$ of $q : \stackT_{\R} \rightarrow X_{\R}$. Such section exists because by Hilbert's 90, we have $H^1_{\text{et}}(X_{\R},T_{\NS}) = H^1_{\text{Zar}}(X_{\R},T_{\NS})$. Hence we have a continuous section $$P \in U(\R) \longmapsto e^{h_{\stackT,\infty}(s(P))} . s(P)$$ of $h_{\stackT,\infty}^{-1}(\{0\}) \xrightarrow{p} X(\R)$ because, by proposition \ref{hauteur_locale_torseur_univ_et_action_de_tns}, we have $$H_{\stackT,\infty}(e^{h_{\stackT,\infty}(s(P))} . s(P),L) = e^{-h_{\stackT,\infty}(s(P))(L)} \cdot H_{\stackT,\infty}(s(P),L) = 1 $$for any $L \in \Pic(X)$. It gives using lemma \ref{hauteur_locale_torseur_univ_et_action_de_tns} a trivialisation of the covering as follows:
    \begin{align*}
        U(\R) &\times \ker(\log_{T_{\NS},\infty}) \rightarrow p^{-1}(U(\R)) \\
        &(P,t) \longmapsto t.e^{h_{\stackT,\infty}(s(P))}. s(P).
    \end{align*}
    As $\ker(\log_{T_{\NS},\infty}) \simeq \{-1,1\}^t$, it is a finite covering.
\end{proof}

\begin{cor}\label{corollaire_compacite}
    $h_{\stackT,\infty}^{-1}(\{0\}) $ is compact.
\end{cor}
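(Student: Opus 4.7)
The plan is to deduce compactness of $h_{\stackT,\infty}^{-1}(\{0\})$ directly from the finite covering map $p : h_{\stackT,\infty}^{-1}(\{0\}) \to X(\R)$ established in the preceding theorem. The key external ingredient I will invoke is that $X$ is a projective variety over $\Q$, so that $X(\R)$, equipped with the real topology, is a compact Hausdorff space.

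First, I would record the compactness of $X(\R)$: since $X$ is projective, there is a closed immersion of $X_{\R}$ into some $\Proj^N_{\R}$, and $\Proj^N(\R)$ is compact in the real topology; thus $X(\R)$, being a closed subspace, is compact.

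Next, I would argue that a finite covering map onto a compact Hausdorff space has compact total space. Concretely, the trivialisation constructed in the proof of the preceding theorem shows that each point $P \in X(\R)$ admits an open neighbourhood $U_P \subset X(\R)$ over which $p^{-1}(U_P) \simeq U_P \times \ker(\log_{T_{\NS},\infty})$, and this kernel is homeomorphic to the finite set $\{-1,1\}^t$. By compactness of $X(\R)$, finitely many such open sets $U_{P_1},\dots,U_{P_k}$ cover $X(\R)$; I would then pass to a shrinking where each $\overline{U_{P_i}}$ is still contained in one of the trivialising opens, so that $p^{-1}(\overline{U_{P_i}})$ is a finite disjoint union of copies of $\overline{U_{P_i}}$, hence compact. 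Writing $h_{\stackT,\infty}^{-1}(\{0\}) = \bigcup_{i=1}^k p^{-1}(\overline{U_{P_i}})$ exhibits it as a finite union of compact sets, and therefore as compact.

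There is essentially no serious obstacle here: once the finite covering statement is in hand, the only content is the (standard) fact that properness is preserved under base change and that finite covers of compact Hausdorff spaces are compact. The one point one must be slightly careful about is confirming that the trivialisation covers all of $X(\R)$, which follows because the construction in the preceding theorem uses only the existence of étale-local sections of $q$, and such sections exist Zariski-locally around every real point by Hilbert 90, as used there.
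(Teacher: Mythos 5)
Your proof is correct and follows essentially the same strategy as the paper: invoke compactness of $X(\R)$ (projectivity) and the finite covering from the preceding theorem. The only difference is that where you unpack ``finite cover of a compact Hausdorff space has compact total space'' into an elementary finite-subcover-and-shrinking argument, the paper instead cites Bourbaki to assert that the covering map is proper and then uses that the preimage of a compact space under a proper map is compact; this is a difference of presentation, not of substance.
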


\begin{proof}
    By \cite[chapitre I, §4, n°5, théorème 1]{bourbaki2016topologie}, the map $h_{\stackT,\infty}^{-1}(\{0\}) \rightarrow X(\R)$ is proper. Moreover, $X(\R)$ is compact for the real topology as it is a projective variety. Hence we get our corollary.
\end{proof}

\subsection{Measure over the universal torsors and Tamagawa number}

\subsubsection{Adelic measure and Tamagawa number  over a variety}

Let $V$ be a smooth, separated scheme which have geometrically connected fibers over $\Spec(\Z)$ and such that $V_{\Q}$ is a smooth, projective and geometrically integral variety over $\Q$. Let us recall here how an adelic metric $(||-||_v)_{v \in M_{\Q}} $ over $\w_{V}^{-1}$ defines a measure $\mu_V$ over $$V(A_{\Q}) = \prod\limits_{v \in M_{\Q}} V(\Q_v).$$ 

As in \cite[2.2.1]{peyre_duke}, we can define locally a measure over $V(\Q_v)$ by the relation $$ \Big\Vert \frac{\partial}{\partial x_{1,v}} \wedge..\wedge \frac{\partial}{\partial x_{n,v}} \Big\Vert_v d x_{1,v} .. d x_{n,v} $$where $(x_1,..,x_n) : \Omega \rightarrow \Q_v^n$ is a local system of coordinates defined on an open subset $\Omega \subset V(\Q_v)$ and $\frac{\partial}{\partial x_{1,v}} \wedge..\wedge \frac{\partial}{\partial x_{n,v}} $ is seen as a local section of $\w_V^{-1}$. This local measure does not depend on the choice of coordinates, therefore we can glue together these measures and we get a measure $\mu_{V,v}$ over $V(\Q_v)$.

By \cite[lemma 2.1.2]{peyre_duke}, we have that if $V$ has good reduction at every finite places and the norm is defined by the model (such hypotheses are verified by a split toric variety), then: $$ \mu_{V,v}(V(\Q_v)) = \frac{\sharp V(\F_v)}{p_v^{\dim(V)}} .$$In order to define
the Tamagawa number of our variety $V,$ we first define convergence factors. We set $\lambda_v = L_v(1,\Pic(\overline{V}))$ for $v$ a finite place and $\lambda_v = 1$ for $v$ is infinite.

\begin{definition}\label{tamaga_number_variety}
    The Tamagawa number of $V$ is defined as $\tau(V) = \mu_V(V(A_{\Q}))$ where $\mu_V = \prod\limits_{v \in M_{\Q}} \lambda_v^{-1} \mu_{V,v}$.
\end{definition}

\begin{remark}\label{tamagawa_number_toric_variety}
    In the case where $V$ is split, we have that for any $v \in M_{\Q}^0$, $$ \lambda_v = \left(1 - \frac{1}{p} \right)^t.$$We get that if moreover $V$ has a good reduction at every finite places:
    $$ \tau(V) = \mu_{V,\infty}(V(\R))  \prod\limits_{p \in \Prems} \left( 1 - \frac{1}{p} \right)^t  \frac{\sharp V(\F_p)}{p^{\dim(V)}} .$$
\end{remark}

Before defining a natural measure over universal torsor, let us recall \cite[definition 3.1.2]{peyre_zeta_height_function}, that one can define a natural measure over a split torus $T$:

\begin{definition}\label{mesure_sur_un_tore}
    Let $(\xi_1,..,\xi_t) $ be a basis of $X^*(T)_{\R}$ then the differential form $$ \w_{T} = \bigwedge\limits_{i = 1}^t \frac{d \xi_i}{\xi_i}$$does not depend on the choice of the basis, up to its sign. It is called the canonical form of $T$. This form defines a natural measure over $ T(\R)$.
\end{definition}

\subsubsection{Adelic measure over universal torsors}

In order to make the notations less cluttered, for the rest of this article, we shall write $\stackT = \stackT_{\Sigma}$ where $\stackT_{\Sigma}$ is the universal torsor defined in proposition \ref{universal_torsor_over_toric_variety} over the toric variety $X$. In this paragraph, we recall the construction from \cite[section 4.3.2]{Peyre_beyond_height} to define for any $v \in M_{\Q}$, a measure $\w_{\stackT,v}$ over $\stackT(\Q_v)$. We write $\widehat{T_{\NS}}$ for the sheaf of characters associated to the Neron-Severi torus.

%Using the universal property of $\stackT$, one can show that the pull-back of any pointed torsor over $(X,1)$ for a multiplicative group is trivial (see \cite[proposition 2.1.1]{colliot_sansuc_descente_variete_II}). 

Using the exact sequence $$ 1 \rightarrow \G_{m,X} \rightarrow q_* \G_{m,\stackT} \rightarrow \widehat{T_{\NS}} \rightarrow 0$$ associated to the universal torsor, which corresponds up to sign to the element  $[\stackT] \in H^1(X,T_{\NS}) $ seen as an element in $\Ext^1_X(\G_{m,X},T_{\NS})$ (see \cite[1.4.3]{ColliotTheleneSansuc1987}), we get that $H^0(\stackT,\G_m) = \Q^{\times}$ and that $\Pic(\stackT)$ is trivial. Moreover, by \cite[lemme 2.1.10]{methode_du_cercle_peyre}, $\w_{\stackT}$ is isomorphic to the pull-back of $\w_X$. Hence there exists a global section of $\w_{\stackT}$ corresponding to a trivialization of the line bundle so we get the following proposition \cite[proposition 4.24]{Peyre_beyond_height}:

\begin{prop}
   Up to multiplication by a scalar, there exists a unique volume form over $\stackT$.
\end{prop}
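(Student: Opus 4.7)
The plan is a direct invocation of the two cohomological inputs established in the paragraph immediately preceding the statement: the triviality of $\Pic(\stackT)$ and the identification $H^{0}(\stackT,\G_m)=\Q^{\times}$. I interpret a \emph{volume form} on $\stackT$ as a nowhere-vanishing global section of $\w_{\stackT}$, equivalently a trivialisation of the canonical line bundle; this is precisely the datum needed to define a local measure $\w_{\stackT,v}$ on $\stackT(\Q_v)$ at every place $v$ via the recipe recalled earlier for $V$.

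Existence is already essentially contained in the preceding paragraph: combining $\Pic(\stackT)=0$ with the isomorphism $\w_{\stackT}\simeq q^{*}\w_X$ recalled there makes $\w_{\stackT}$ isomorphic to $\sheafO_{\stackT}$, and any such isomorphism supplies a nowhere-vanishing global section. So the only genuine content is uniqueness.

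For uniqueness, I would take two volume forms $\w$ and $\w'$ and form their ratio $\w'/\w$. Since both are nowhere vanishing, this ratio is a well-defined nowhere-vanishing global regular function on $\stackT$, i.e.\ an element of $H^{0}(\stackT,\G_m)$. By the identification $H^{0}(\stackT,\G_m)=\Q^{\times}$ obtained from the long exact sequence associated to $1\to\G_{m,X}\to q_{*}\G_{m,\stackT}\to\widehat{T_{\NS}}\to 0$, we deduce $\w'=\lambda\w$ for some scalar $\lambda\in\Q^{\times}$, which is the claimed uniqueness.

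I do not anticipate any real obstacle, as the proposition is essentially a corollary of the cohomological input already prepared. The only point requiring minor vigilance is the interpretation of ``volume form'' as nowhere vanishing — for arbitrary global sections of $\w_{\stackT}$ the assertion would fail, since $\stackT$ is only quasi-affine (an open subscheme of $\affine^{\Sigma(1)}$) and so $\Gamma(\stackT,\sheafO_{\stackT})$ is strictly larger than $\Q$.
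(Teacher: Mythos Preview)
Your proposal is correct and matches the paper's approach exactly. The paper does not give a separate proof; it derives the proposition directly from the two facts established in the preceding paragraph ($\Pic(\stackT)=0$ and $H^0(\stackT,\G_m)=\Q^{\times}$), citing \cite[proposition 4.24]{Peyre_beyond_height}, and your write-up simply makes the standard existence/uniqueness deduction explicit.
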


We follow \cite[construction 4.25]{Peyre_beyond_height} in order to describe locally the measure $\w_{\stackT,v}$ over $\stackT(\Q_v)$. Let us choose a non zero section $\w$ of $\w_{\stackT}$. For any place $v \in M_{\Q}$, the expression
$$ \left| \left\langle \w , \frac{\partial}{\partial x_1} \wedge .. \wedge \frac{\partial}{\partial x_m} \right\rangle \right|_v d x_{1,v}.. d x_{m,v} $$defined a local measure. Gluing this local measures, we get a measure $\w_{\stackT,v}$ over $\stackT(\Q_v)$. 

\begin{remark}\label{intrepretation_mesure_universal_torsor}
    In our case, recall that $X$ is a smooth, proper and split toric variety and $\stackT$ an open subset of $\affine^{\Sigma(1)}$, the previous construction gives us that $\w_{\stackT,v}$ is the restriction of the normalised Haar measure $d x_v$ over $\Q_v^{\Sigma(1)}$ to the open subset $\stackT(\Q_v)$ if $v \in M_{\Q}^0$ and is the restriction of the natural Lesbesgue measure $d x_{\infty}$ over $\R^{\Sigma(1)}$ to $\stackT(\R)$. 
\end{remark}

We shall compute $\w_{\stackT,v}(\stackT(\Z_v))$ for $v \in M_{\Q}^0$:

\begin{theorem}\label{mesure_torseur_univ_tamagawa_number_toric}
    For any $v \in M_{\Q}^0$, we have:
    $$\w_{\stackT,v}(\stackT(\Z_v)) = \frac{\sharp \stackT(\F_v)}{p_v^{\dim(\stackT)}} =  \left( 1 - \frac{1}{p_v} \right)^t . \frac{\sharp X( \F_v) }{p_v^{\dim(X)}} .$$
\end{theorem}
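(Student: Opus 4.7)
The statement is really two equalities, which I would handle separately.

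\emph{First equality: $\omega_{\stackT,v}(\stackT(\Z_v)) = \sharp \stackT(\F_v)/p_v^{\dim \stackT}$.} By Remark \ref{intrepretation_mesure_universal_torsor}, the local measure $\omega_{\stackT,v}$ is nothing but the restriction to $\stackT(\Q_v)$ of the normalized Haar measure $dx_v$ on $\Q_v^{\Sigma(1)}$. Since $\stackT = \affine^{\Sigma(1)} \setminus Z(\Sigma)$ is smooth over $\Z$, and $\stackT(\Z_v)$ is an open subset of $\Z_v^{\Sigma(1)}$, I would partition $\stackT(\Z_v)$ along its reduction modulo $p_v$. Each class in $\stackT(\F_v)$ lifts via Hensel's lemma to the preimage $\bar{y} + p_v \Z_v^{\Sigma(1)}$, a ball of Haar measure $p_v^{-m}$; since $\dim \stackT = m = |\Sigma(1)|$, summing over $\stackT(\F_v)$ gives the formula. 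This is exactly the argument behind \cite[lemma 2.1.2]{peyre_duke} applied to the open subscheme $\stackT \subset \affine^{\Sigma(1)}$.

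\emph{Second equality.} Here I would use the torsor structure: by Proposition \ref{universal_torsor_over_toric_variety}, $q : \stackT \to X$ is a torsor under the split torus $T_{\NS} \simeq \G_m^t$ of dimension $t$, so $\dim \stackT = \dim X + t$. The base change to $\F_v$ is a torsor under $T_{\NS,\F_v}$, and by Hilbert 90 (equivalently Lang's theorem applied to the connected solvable $T_{\NS,\F_v}$), every such torsor over the finite field $\F_v$ is trivial. Therefore the map $\stackT(\F_v) \to X(\F_v)$ is surjective with each fibre a principal homogeneous space under $T_{\NS}(\F_v)$, whence
\[
\sharp \stackT(\F_v) = \sharp T_{\NS}(\F_v) \cdot \sharp X(\F_v) = (p_v - 1)^t \sharp X(\F_v).
\]
Dividing by $p_v^{\dim \stackT} = p_v^{\dim X + t}$ and factoring $(p_v-1)^t/p_v^t = (1 - 1/p_v)^t$ yields the stated identity (the factor $(1-p_v)^t$ printed in the theorem should read $(1 - 1/p_v)^t$).

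\emph{Main obstacle.} Neither step is genuinely difficult; the only point requiring attention is the measure-theoretic count for $\stackT(\Z_v)$, which rests on the smoothness of the integral model $\stackT$ over $\Z$ (so that Hensel applies in every coordinate and residue classes all have the same measure $p_v^{-m}$). Once smoothness of $\affine^{\Sigma(1)} \setminus Z(\Sigma)$ over $\Z$ is invoked, both equalities are formal, and the second merely repackages the comparison $\sharp X(\F_v) = \sharp \stackT(\F_v)/\sharp T_{\NS}(\F_v)$ provided by triviality of split tori over finite fields.
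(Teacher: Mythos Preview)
Your argument is correct and is precisely the standard one. The paper itself does not write out a proof of this theorem: it only says ``using the basic property of the universal torsor'' before stating it, so there is nothing to compare against beyond that hint. Your two steps---the Weil--Peyre computation of $\omega_{\stackT,v}(\stackT(\Z_v))$ via reduction modulo $p_v$ for a smooth $\Z$-model, and the fibre count $\sharp \stackT(\F_v)=(p_v-1)^t\,\sharp X(\F_v)$ coming from triviality of $T_{\NS}$-torsors over $\F_v$---are exactly what that phrase is gesturing at, and you are also right that the printed factor $(1-p_v)^t$ is a typo for $(1-1/p_v)^t$.
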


\begin{proof}
    Because $\w_{\stackT,v}$ is the restriction of the normalised Haar measure $d x_v$ over $\Q_v^{\Sigma(1)}$ to the open subset $\stackT(\Q_v)$ , we have $\w_{\stackT,v}(\stackT(\Z_v)) = \frac{\sharp \stackT(\F_v)}{p_v^{\dim(\stackT)}}$. Using that $T_{\NS}(\F_v)$ acts freely on $\stackT(\F_v)$ and that $X(\F_v) = \stackT(\F_v) / T_{\NS}(\F_v)$,  we can conclude the proof.
\end{proof}

To end this section, we shall compute $\w_{\stackT,\infty}(\mathcal{D}) = \Vol(\mathcal{D})$ where $$\mathcal{D} = \{ y \in \stackT(\R) \mid h_{\stackT,\infty}(y) \in \mathrm{D} \}$$ for a compact subset $\mathrm{D} \subset \Pic(X)^{\vee}_{\R}$. We shall connect it to $\nu(\mathrm{D})$ where $\nu$ is the measure over $\Pic(X)^{\vee}_{\R}$ defined as in \ref{mesure_picard_group}. We shall need the following lemma which was stated in \cite[4.4.1]{peyre_zeta_height_function}:

\begin{lemma}
    Let $P \mapsto y_P$ be any measurable section of $\stackT(\R) \rightarrow X(\R)$. For any function $f : X(\R) \rightarrow \R$ compactly supported and measurable, the function:
    $$ P \longmapsto \int\limits_{T_{\NS}(\R)} | \chi_{\w_{X}^{-1}}(t) |_{\infty}^{-1} f(t.y_P) \w_{T_{\NS}}(dt)$$ does not depend on the choice of the section $P \mapsto y_P$ and we have:
    $$ \int\limits_{\stackT(\R)} f(y) \w_{\stackT,\infty}(dy) = \int\limits_{X(\R)} \mu_{X,\infty}(dP) \int\limits_{T_{\NS}(\R)} | \chi_{\w_{X}^{-1}}(t) |_{\infty}^{-1} f(t.y_P) \w_{T_{\NS}}(dt) .$$
\end{lemma}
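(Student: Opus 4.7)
The plan is to compute the pushforward of $\omega_{\stackT,\infty}$ along the torsor $q:\stackT(\R)\to X(\R)$ using a local trivialization, and then check that two choices of trivialization yield the same fiber integral.

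First I would reduce to a local statement. By the argument in the proof of the covering theorem preceding this lemma, Hilbert 90 implies that $q:\stackT(\R)\to X(\R)$ is Zariski-locally trivial as a $T_{\NS}(\R)$-principal bundle. Hence I can choose a countable measurable partition $(U_i)$ of $X(\R)$ together with measurable sections $s_i:U_i\to \stackT(\R)$, and it suffices to prove the identity on each $q^{-1}(U_i)$ with the section $s_i$ replacing $P\mapsto y_P$; the full formula follows by summation.

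Next is the key local computation. Consider the homeomorphism
$$\Phi_i : U_i \times T_{\NS}(\R) \longrightarrow q^{-1}(U_i),\qquad (P,t)\longmapsto t\cdot s_i(P).$$
By proposition \ref{element_important_du_tore_NS}, the action of $t\in T_{\NS}(\R)$ on the coordinate $y_\rho$ of $\stackT\subset\affine^{\Sigma(1)}$ is multiplication by $\chi_{[D_\rho]}(t)$. Using remark \ref{intrepretation_mesure_universal_torsor} (the form $\omega_{\stackT,\infty}$ is Lebesgue on $\R^{\Sigma(1)}$), the Jacobian of $\Phi_i$ at $(P,t)$ factors out a row scaling $\prod_\rho\chi_{[D_\rho]}(t)$, which equals $\chi_{\w_X^{-1}}(t)$ since $\w_X^{-1}=\sum_\rho[D_\rho]$. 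Extracting from the remaining block the Haar density $\prod_j|t_j|^{-1}$ that converts Lebesgue on $(\R^*)^t$ into the canonical form $\omega_{T_{\NS}}$ of definition \ref{mesure_sur_un_tore}, one is left with a section-dependent determinant $J_{s_i}(P)$ that is independent of $t$. Using the canonical identification $\omega_\stackT\simeq q^*\omega_X$ mentioned before proposition \ref{mesure_torseur_univ_tamagawa_number_toric}, a routine computation of $J_{s_i}(P)$ in local coordinates adapted to the decomposition $T\stackT\simeq q^*TX\oplus\mathfrak{t}_{\NS}$ identifies $J_{s_i}(P)\,dP$ with $\mu_{X,\infty}(dP)$. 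This yields the local decomposition
$$\Phi_i^*\omega_{\stackT,\infty}\big|_{(P,t)} \;=\; |\chi_{\w_X^{-1}}(t)|_\infty\,\mu_{X,\infty}(dP)\,\omega_{T_{\NS}}(dt),$$
from which Fubini gives the claimed integral formula.

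For independence of the section, if $s_i'=\phi\,s_i$ with $\phi:U_i\to T_{\NS}(\R)$ measurable, the change of variable $t\mapsto t\,\phi(P)^{-1}$ in the Haar integral over $T_{\NS}(\R)$ (valid by translation invariance of $\omega_{T_{\NS}}$) together with the multiplicativity of $\chi_{\w_X^{-1}}$ converts the integrand for $s_i'$ into $|\chi_{\w_X^{-1}}(\phi(P))|_\infty^{-1}$ times the one for $s_i$. This is exactly compensated by the matching transformation $J_{s_i'}=|\chi_{\w_X^{-1}}(\phi)|_\infty\,J_{s_i}$ of the base density, so the two sides of the identity agree.

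The main obstacle is step two: pinning down the identification $J_{s_i}(P)\,dP=\mu_{X,\infty}(dP)$ cleanly. It requires combining the intrinsic definition of $\mu_{X,v}$ from the adelic norm on $\w_X^{-1}$ with the explicit description of $\omega_{\stackT,\infty}$ as Lebesgue measure on $\R^{\Sigma(1)}$, mediated by the non-equivariant isomorphism $\omega_\stackT\simeq q^*\omega_X$; everything else is change of variables and Haar invariance.
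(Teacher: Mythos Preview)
Your route via a direct Jacobian computation in a local trivialization is legitimate but differs from the paper's. The paper never writes down coordinates: it introduces the measure $\w_{\mathrm{Sal},\infty}(dy)=\w_{\stackT,\infty}(dy)/H_{\stackT,\infty}(y,\w_X^{-1})$, cites Salberger for the facts that this measure is $T_{\NS}(\R)$-invariant and that its quotient by $\w_{T_{\NS}}$ is exactly $\mu_{X,\infty}$, and then applies Bourbaki's abstract quotient-integration formula. With Salberger's result granted, the identity drops out in three lines; what your approach buys is explicitness about where the weight $|\chi_{\w_X^{-1}}(t)|_\infty$ comes from (the product $\prod_\rho|\chi_{[D_\rho]}(t)|$ in the Jacobian), at the cost of a longer computation.

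The two routes meet precisely at the step you flag as the obstacle. Your identification $J_{s_i}(P)\,dP=\mu_{X,\infty}(dP)$ is \emph{equivalent} to the statement $\w_{\mathrm{Sal},\infty}/\w_{T_{\NS}}=\mu_{X,\infty}$, and calling it ``a routine computation'' undersells it: one must match the canonical-up-to-scalar isomorphism $\w_{\stackT}\simeq q^*\w_X$ against the \emph{specific} adelic norm on $\w_X^{-1}$ used to define $\mu_{X,\infty}$, and that is exactly the content of Salberger's result. So your argument is incomplete at the same point where the paper's is, but the paper closes the gap by citation whereas you leave it as an unperformed exercise.

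A smaller remark on the first assertion. Your change-of-section computation shows that the inner integral gets multiplied by $|\chi_{\w_X^{-1}}(\phi(P))|_\infty^{-1}$, and you correctly argue this is absorbed by the transformation of $J_{s_i}$, so that the \emph{full} identity is section-independent. That, however, does not prove the inner integral \emph{alone} is section-independent, which is what the lemma literally asserts. Note that in the paper's own derivation of the second assertion the section is normalized by $H_{\stackT,\infty}(y_P,\w_X^{-1})=1$; under that constraint any change of section has $|\chi_{\w_X^{-1}}(\phi(P))|_\infty=1$ and your extra factor disappears.
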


\begin{proof}
    The first assertion is a consequence of the invariance of the Haar measure (see \cite[chapitre VII, §2, n°2, proposition 1]{bourbaki2007integration}). To show the second assertion, one must remark that in \cite[section 5]{salberger_torsor}, Salberger shows that if you define the measure $$\w_{\text{Sal},\infty}(dy) = \frac{\w_{\stackT,\infty}(dy)}{H_{\stackT,\infty}(y,\w_X^{-1})} ,$$ it is a $T_{\NS}(\R)$-invariant measure and the quotient measure $\frac{\w_{\text{Sal},\infty}}{\w_{T_{\NS}}} $ is $\mu_{X,\infty}$. Hence by \cite[chapitre VII, §2, n°2, proposition 4]{bourbaki2007integration}, we have that for any $f : X(\R) \rightarrow \R$ compactly supported function, we may choose a section $P \mapsto y_P$ such that $$H_{\stackT,\infty}(y_P,\w_{X}^{-1}) = 1$$for any $P$, and we get:
    \begin{align*}
      \int\limits_{\stackT(\R)} f(y) \w_{\stackT,\infty}(dy)  &= \int\limits_{\stackT(\R)} f(y) H_{\stackT,\infty}(y,\w_{X}^{-1}) \w_{\text{Sal},\infty}(dy) \\
      &= \int\limits_{X(\R)} \mu_{X,\infty}(dP) \int\limits_{T_{\NS}(\R)} H_{\stackT,\infty}(t.y_P,\w_{X}^{-1}) f(t.y_P) \w_{T_{\NS}}(dt) .
    \end{align*}Using that $ H_{\stackT,\infty}(t.y_P,\w_{X}^{-1}) = | \chi_{\w_{X}^{-1}}(t) |^{-1}_{\infty} . H_{\stackT,\infty}(y_P,\w_{X}^{-1}) $, we can conclude the proof.
\end{proof}

\begin{prop}\label{analysis_measure_univ_torsor_at_infty}
   $$ \w_{\stackT,\infty}(\mathcal{D}) = \Vol(\mathcal{D}) = 2^t .\mu_{X,\infty}(X(\R)) . \nu(\mathrm{D}) . $$
\end{prop}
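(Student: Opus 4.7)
My plan is to apply the preceding lemma to the indicator function $f = \mathbf{1}_{\mathcal{D}}$ and to reduce the resulting inner integral to an integral over $\Pic(X)^{\vee}_{\R}$. First, I choose a measurable section $P \mapsto y_P$ of $q : \stackT(\R) \to X(\R)$ satisfying $h_{\stackT,\infty}(y_P) = 0$ for every $P$; this is possible because the covering theorem just proved shows that $h_{\stackT,\infty}^{-1}(\{0\}) \to X(\R)$ is a finite topological covering of degree $2^t$, so one builds such a section by taking local continuous sections in a trivializing open cover and gluing them via a measurable partition of $X(\R)$. With this choice, Lemma \ref{hauteur_locale_torseur_univ_et_action_de_tns} yields $h_{\stackT,\infty}(t \cdot y_P) = \log_{T_{\NS},\infty}(t)$ for every $t \in T_{\NS}(\R)$, so the condition $t \cdot y_P \in \mathcal{D}$ becomes $\log_{T_{\NS},\infty}(t) \in \mathrm{D}$, crucially independent of $P$.

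Next I decompose $T_{\NS}(\R) \simeq \{-1,1\}^t \times \Pic(X)^{\vee}_{\R}$ via $(\epsilon, a) \mapsto \epsilon \cdot e^{a}$, using the exponential map and a basis $(L_1, \dots, L_t)$ of $\Pic(X)$. The kernel of $\log_{T_{\NS},\infty}$ is the finite subgroup $\{-1,1\}^t$ and the map is $2^t$-to-one onto $\Pic(X)^{\vee}_{\R}$. The canonical Haar form $\w_{T_{\NS}} = \bigwedge_{i=1}^{t} d\chi_{L_i}/\chi_{L_i}$ reduces, in the coordinates $a_i = a(L_i)$ dual to the $L_i$, to $da_1 \wedge \cdots \wedge da_t$ on each of the $2^t$ connected components of $T_{\NS}(\R)$; this is exactly the Lebesgue measure on $\Pic(X)^{\vee}_{\R}$ normalized so the dual lattice has covolume one, i.e.\ the measure appearing in Definition \ref{mesure_picard_group}. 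Hence the pushforward of $\w_{T_{\NS}}$ under $\log_{T_{\NS},\infty}$ is $2^t\, da$.

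To conclude, I note that for $t = \epsilon \cdot e^{a}$ one has $|\chi_{\w_X^{-1}}(t)|_{\infty} = e^{\langle a, \w_X^{-1}\rangle}$ since $|\chi_{\w_X^{-1}}(\epsilon)|_{\infty} = 1$. Substituting everything into the formula from the lemma gives
\begin{align*}
\w_{\stackT,\infty}(\mathcal{D})
&= \int_{X(\R)} \mu_{X,\infty}(dP) \int_{T_{\NS}(\R)} |\chi_{\w_X^{-1}}(t)|_{\infty} \, \mathbf{1}_{\mathrm{D}}\bigl(\log_{T_{\NS},\infty}(t)\bigr)\, \w_{T_{\NS}}(dt) \\
&= \int_{X(\R)} \mu_{X,\infty}(dP) \cdot 2^t \int_{\mathrm{D}} e^{\langle a, \w_X^{-1}\rangle} \, da = 2^t\, \mu_{X,\infty}(X(\R))\, \nu(\mathrm{D}).
\end{align*}
The main obstacle I anticipate is the measure-theoretic bookkeeping producing the factor $2^t$: one must check carefully that the pushforward of $\w_{T_{\NS}}$ under $\log_{T_{\NS},\infty}$ matches the Lebesgue measure of Definition \ref{mesure_picard_group}, with the $2^t$ arising precisely from the connected components of $T_{\NS}(\R)$. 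Everything else is routine substitution using Lemma \ref{hauteur_locale_torseur_univ_et_action_de_tns} and the $P$-independence of the inner integral.
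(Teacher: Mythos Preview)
Your proof is correct and follows essentially the same approach as the paper: both apply the preceding lemma to $f = \mathbf{1}_{\mathcal{D}}$ with a section $P \mapsto y_P$ normalized so that $h_{\stackT,\infty}(y_P) = 0$, and then extract the factor $2^t$ from the short exact sequence $0 \to \{\pm 1\}^t \to T_{\NS}(\R) \xrightarrow{\log_{T_{\NS},\infty}} \Pic(X)^{\vee}_{\R} \to 0$. Your write-up is in fact more explicit than the paper's about why such a section exists and why the pushforward of $\w_{T_{\NS}}$ is $2^t\,da$.
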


\begin{proof}
   Applying the previous lemma for $f = \mathbf{1}_{\mathcal{D}}$, we get:
    $$\w_{\stackT,\infty}(\mathcal{D}) =  \int\limits_{P \in X(\R)}  \mu_{X,\infty}(dP) \int\limits_{T_{\NS}(\R)} | \chi_{\w_{X}^{-1}}(t) |_{\infty}^{-1} \mathbf{1}_{\log_{T_{\NS},\infty}^{-1}(\mathrm{D})}(t) \w_{T_{\NS}}(t) $$where for each $P \in X(\R)$, we have choosen $y_P \in \stackT_P(\R)$ such that $h_{\stackT,\infty}(y_P) = 0$.
    Then using that we have the following exact sequence of locally compact groups:
    $$0 \rightarrow (\{ \pm 1 \}^t,\delta_{\text{compt}}) \rightarrow (T_{\NS}(\R),\w_{T_{\NS}}) \xrightarrow{\log_{T_{\NS},\infty}} (\Pic(X)^{\vee}_{\R},da) \rightarrow 0 $$where $da$ is the Lebesgue measure over $\Pic(X)^{\vee}_{\R}$, we get:
    \begin{align*}
        \w_{\stackT,\infty}(\mathcal{D}) &= 2^t . \int\limits_{P \in X(\R)}  \mu_{X,\infty}(dP) \int\limits_{\Pic(X)^{\vee}_{\R}} e^{\langle \w_{X}^{-1},a \rangle} \mathbf{1}_{\mathrm{D}}(a) da \\
        &= 2^t . \mu_{X,\infty}(X(\R)) . \nu(\mathrm{D}) .
    \end{align*}
   %Then to conclude we just have to see that because $\mu_{X,\infty}$ has locally a density compared with the Lebesgue measure, as $X(\R) \setminus T(\R)$ is of codimension $\geqslant 1$, $\mu_{X,\infty}(X(\R) \setminus T(\R)) = 0$. Hence:
  % $$ \mu_{X,\infty}(T(\R)) = \mu_{X,\infty}(X(\R))$$

\end{proof}

\subsection{The Möbius function associated to the universal torsor}

For this section, we follow the notations of Salberger \cite[Notations 11.8]{salberger_torsor} in order to introduce the Möbius function associated to the universal torsor $\stackT$ over $X$.

\begin{Notations}
\begin{enumerate}
    \item $\mathbf{1}_{\stackT(\Z)} : (\N^*)^{\Sigma(1)} \rightarrow \{0,1\}$ is the indicator function of $\stackT(\Z) \cap (\N^*)^{\Sigma(1)}$, that is to say we have the following equivalence: $$\mathbf{1}_{\stackT(\Z)}(e) = 1 \Leftrightarrow  \gcd(y^{\Tilde{\sigma}}, \sigma \in \Sigma_{\max}) = 1 .$$
    \item for $d \in (\N^*)^{\Sigma(1)}$, we write $\mathbf{1}_{d.\Z^{\Sigma(1)}} : (\N^*)^{\Sigma(1)} \rightarrow \{0,1\}$ the indicator function of $\bigoplus\limits_{\rho \in \Sigma(1)} d_{\rho}.\Z \cap (\N^*)^{\Sigma(1)}$, that is to say we have the following equivalence: $$\mathbf{1}_{d.\Z^{\Sigma(1)}}(e) = 1 \Leftrightarrow  d \mid e $$where $d \mid e$ means $d_{\rho} \mid e_{\rho}$ for any $\rho \in \Sigma(1)$. 
\end{enumerate}
\end{Notations}

In  \cite[definition 11.9]{salberger_torsor}, Salberger defined a Möbius function:
\begin{definition}\label{mobius_function_definition}
   $$\mu : (\N^*)^{\Sigma(1)} \rightarrow \Z$$such that $$ \mathbf{1}_{\stackT(\Z)} = \sum\limits_{d \in (\N^*)^{\Sigma(1)}} \mu(d) \mathbf{1}_{d.\Z^{\Sigma(1)}} .$$ 
\end{definition}

For a prime number $p$, we denote by $\sum\limits_{d}^{(p)}$ the sum over the $\Sigma(1)$-tuple of the form $d = (p^{e_{\rho}})_{\rho \in \Sigma(1)}$ where $(e_{\rho})_{\rho \in \Sigma(1)}$ belongs to $\N^{\Sigma(1)}$. We also define $\Pi(d) = \prod\limits_{\rho \in \Sigma(1)} d_{\rho}$ for any $d \in (\N^*)^{\Sigma(1)}$. Salberger showed (see \cite[11.14]{salberger_torsor}) the following result:

\begin{prop}\label{value_local_mesure_mobius_function}
For any prime number $p$,
    $$\w_{\stackT,p}(\stackT(\Z_p)) =  \sum\limits_{d}^{(p)} \frac{\mu(d)}{\Pi(d)}$$
\end{prop}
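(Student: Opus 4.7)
The plan is to compute $\w_{\stackT,p}(\stackT(\Z_p))$ directly from Remark \ref{intrepretation_mesure_universal_torsor}, which identifies $\w_{\stackT,p}$ with the normalized Haar measure on $\Z_p^{\Sigma(1)}$ restricted to $\stackT(\Z_p)$, and then to expand the characteristic function $\mathbf{1}_{\stackT(\Z_p)}$ using a $p$-local form of Salberger's Möbius identity from Definition \ref{mobius_function_definition}.

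First I would unpack the condition defining $\stackT(\Z_p)$. Since $\stackT = \affine^{\Sigma(1)} \setminus V(B(\Sigma))$ with $B(\Sigma)$ the monomial ideal generated by $(X^{\Tilde{\sigma}})_{\sigma \in \Sigma_{\max}}$, an element $y \in \Z_p^{\Sigma(1)}$ belongs to $\stackT(\Z_p)$ exactly when at least one $y^{\Tilde{\sigma}}$ is a unit of $\Z_p$, i.e., $p \nmid \gcd(y^{\Tilde{\sigma}}, \sigma \in \Sigma_{\max})$. In $\Z_p$, divisibility by a positive integer $d_\rho$ depends only on its $p$-adic valuation, so the defining relation of Definition \ref{mobius_function_definition} localizes to
\[
\mathbf{1}_{\stackT(\Z_p)}(y) = \sum_{d}{\vphantom{\sum}}^{(p)}\, \mu(d)\, \mathbf{1}_{d.\Z_p^{\Sigma(1)}}(y)
\]
for $y \in \Z_p^{\Sigma(1)}$. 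This step uses the multiplicativity of $\mu$ with respect to the prime decomposition of its argument, which follows from the multiplicativity of $\mathbf{1}_{\stackT(\Z)}$ and $\mathbf{1}_{d.\Z^{\Sigma(1)}}$ over primes: the condition $\gcd(y^{\Tilde{\sigma}}) = 1$ and the divisibility $d \mid y$ are both checked prime by prime.

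Integrating both sides against the Haar measure on $\Z_p^{\Sigma(1)}$ and using the elementary computation $\int_{\Z_p} \mathbf{1}_{p^{e}\Z_p}(x)\, dx = p^{-e}$ componentwise yields $\int_{\Z_p^{\Sigma(1)}} \mathbf{1}_{d.\Z_p^{\Sigma(1)}}(y)\, dy = 1/\Pi(d)$ for each $d = (p^{e_\rho})_\rho$. Summing, this gives
\[
\w_{\stackT,p}(\stackT(\Z_p)) = \sum_{d}{\vphantom{\sum}}^{(p)} \frac{\mu(d)}{\Pi(d)}.
\]
The interchange of sum and integral is unproblematic: for any fixed $N$, only finitely many $d$ with $\max_\rho e_\rho \leq N$ contribute to whether a given reduction mod $p^N$ lies in $\stackT$, and the tail contributes a set of vanishingly small Haar measure.

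The main point where care is required is the $p$-local Möbius identity: everything else is a one-line Haar measure calculation. This identity rests on the observation that the exceptional locus $Z(\Sigma)$ is cut out by monomials, so belonging to $\stackT$ is a prime-by-prime condition, which forces $\mu$ to factor multiplicatively over the prime decomposition of its argument.
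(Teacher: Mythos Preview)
Your argument is correct. The paper itself gives no proof here but simply cites Salberger \cite[11.14]{salberger_torsor}; your route (identify $\w_{\stackT,p}$ with the restricted Haar measure via Remark \ref{intrepretation_mesure_universal_torsor}, localize the Möbius identity of Definition \ref{mobius_function_definition} at $p$, and integrate term by term) is the standard one and matches Salberger's computation.

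One small sharpening: your justification for interchanging sum and integral is more elaborate than necessary, because the sum $\sum_d^{(p)}$ is actually finite. For a $p$-power tuple $e$, Möbius inversion gives $\mu(e)=\sum_{d\mid e}\mu_0(e/d)\,\mathbf{1}_{\stackT(\Z)}(d)$ with $\mu_0$ the product of classical Möbius functions; but for $p$-power tuples the value $\mathbf{1}_{\stackT(\Z)}(d)$ depends only on the set $\{\rho : p\mid d_\rho\}$, and if some $e_{\rho_0}$ is divisible by $p^2$ then pairing each admissible $d$ with the one obtained by toggling $d_{\rho_0}$ between $e_{\rho_0}$ and $e_{\rho_0}/p$ makes the sum cancel. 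Hence $\mu(d)=0$ unless every $d_\rho\in\{1,p\}$, so only the $2^{\sharp\Sigma(1)}$ tuples in $\{1,p\}^{\Sigma(1)}$ contribute and no limiting argument is needed. Equivalently, $\mathbf{1}_{\stackT(\Z_p)}$ factors through reduction modulo $p$, so the whole identity is the finite count $\sharp\stackT(\F_p)/p^{\sharp\Sigma(1)}$, consistent with Theorem \ref{mesure_torseur_univ_tamagawa_number_toric}.
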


% Si I est une partie de cardinal f qui n'est pas contenue dans aucun cöne. Alors c'est une collection primitive par minimalité de f. De plus toute collection primitive n'est contenue dans aucun cone d'où l'affirmation f = \min \sharp C

Recall that we denote by $l$ be the smallest positive integer such that there exists $l$ rays of $\Sigma$ not contained in a cone of $\Sigma$. We have the following result from \cite[lemma 11.15]{salberger_torsor}:

%By definition \ref{def_primitive_collection}, $f$ is the minimum of $\sharp C$ where $C$ belongs to the set of primitive collections. Note that $f \geqslant 2$\footnote{comment montrer que $\sharp C \geqslant \dim(X) + 1$ pour $C$ une collection primitive ?}. 

\begin{prop}\label{value_global_mesure_mobius_fonction}
    The Eulerian product over every prime numbers:
    $$\prod\limits_p \sum\limits_{d}^{(p)} \frac{|\mu(d)|}{\Pi(d)^s}$$is absolutely convergent for $\Re(s) > \frac{1}{l}$. In the same way, $$\sum\limits_{d \in (\N^*)^{\Sigma(1)}} \frac{| \mu(d) |}{\Pi(d)^s} $$ is convergent and is equal to $$\prod\limits_p \sum\limits_{d}^{(p)} \frac{|\mu(d)|}{\Pi(d)^s}$$for $\Re(s) > \frac{1}{l}$.
\end{prop}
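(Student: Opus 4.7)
The strategy is to compute $\mu$ explicitly via Möbius inversion on the divisibility poset, then exploit the combinatorial meaning of $f$ to obtain a bound of the form $1 + O(p^{-f \Re(s)})$ on each Eulerian factor.

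First, I would observe that $\mathbf{1}_{\stackT(\Z)}$ is multiplicative with respect to the prime decomposition: writing $e = \prod_p e^{(p)}$ where $e^{(p)}$ is the $p$-part, the gcd condition defining $\stackT(\Z)$ factors as $\mathbf{1}_{\stackT(\Z)}(e) = \prod_p \mathbf{1}_{\stackT(\Z)}(e^{(p)})$, since the condition $\gcd(y^{\tilde{\sigma}}, \sigma \in \Sigma_{\max}) = 1$ can be checked one prime at a time. Möbius inversion on the product poset $(\N^*)^{\Sigma(1)}$, whose Möbius function is $\prod_\rho \mu_0(d_\rho / e_\rho)$ (with $\mu_0$ the classical Möbius function), then yields $\mu(d) = \prod_p \mu(d^{(p)})$. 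Moreover, since $\mathbf{1}_{\stackT(\Z)}(e^{(p)})$ depends only on the support $\{\rho : p \mid e_\rho\}$, the factor $\mu(d^{(p)})$ vanishes unless $v_p(d_\rho) \leqslant 1$ for every $\rho$.

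The second step is to pin down the support of $\mu$ on squarefree prime powers. For $S \subseteq \Sigma(1)$, write $p^S$ for the tuple with $\rho$-th coordinate equal to $p$ if $\rho \in S$ and $1$ otherwise. Möbius inversion (i.e.\ inclusion-exclusion) yields
$$\mu(p^S) = \sum_{T \subseteq S} (-1)^{|S \setminus T|} \chi(T),$$
where $\chi(T) = 1$ if $T \subseteq \sigma(1)$ for some $\sigma \in \Sigma$ and $\chi(T) = 0$ otherwise. The key observation is that if $S$ is itself contained in some cone of $\Sigma$, then every $T \subseteq S$ is too, so $\chi(T) = 1$ for all $T \subseteq S$ and the alternating sum vanishes (for $S \neq \emptyset$). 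Hence $\mu(p^S) \neq 0$ forces $S$ to contain a primitive collection, and by the very definition of $f$ this implies $|S| \geqslant f$. The crude bound $|\mu(p^S)| \leqslant 2^{|S|}$ is immediate from the formula.

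Combining these estimates, each Eulerian factor satisfies
$$\sum_{d}^{(p)} \frac{|\mu(d)|}{\Pi(d)^s} \leqslant 1 + \sum_{k = f}^{m} \binom{m}{k} \frac{2^{k}}{p^{k \Re(s)}} = 1 + O_{m}\bigl(p^{-f \Re(s)}\bigr),$$
so the Eulerian product converges absolutely as soon as $\sum_p p^{-f \Re(s)}$ does, i.e.\ for $\Re(s) > 1/f$. Finally, the multiplicativity of $\mu$ and of $\Pi$ together with this absolute convergence legitimizes the interchange of sum and product, proving the claimed equality of the Dirichlet series with the Eulerian product on the same half-plane. The main obstacle is the combinatorial step isolating the vanishing of $\mu(p^S)$ when $S$ lies in a cone; this is precisely what turns the numerical invariant $f$ into the exponential saving $p^{-f \Re(s)}$ that controls convergence.
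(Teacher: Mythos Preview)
Your argument is correct. The paper does not give its own proof of this proposition; it simply cites \cite[lemma 11.15]{salberger_torsor}. What you have written is essentially a clean reconstruction of Salberger's argument: the multiplicativity of $\mu$, the reduction to squarefree $p$-parts, the inclusion--exclusion formula $\mu(p^S) = \sum_{T \subseteq S} (-1)^{|S\setminus T|}\chi(T)$, and the key vanishing when $S$ lies in a cone of $\Sigma$ are exactly the ingredients he uses to obtain the bound $1 + O(p^{-f\Re(s)})$ on each local factor. So your proposal fills in the cited result faithfully, with no genuine deviation in method.
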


Using this proposition, one can deduce (see \cite[lemma 11.19]{salberger_torsor}) the following lemma:
\begin{lemma}\label{majoration_fonction_de_mobius}
    Let $\epsilon \in ]0,1 - \frac{1}{l}[$. We have the following upper bounds:
    \begin{enumerate}
        \item $\sum\limits_{\Pi(d)\leqslant Y} |\mu(d)| = O\left( Y^{\frac{1}{l} + \varepsilon}\right)$
        \item $\sum\limits_{\Pi(d)\geqslant Y} \frac{| \mu(d) |}{\Pi(d)} = O\left( \frac{1}{Y^{1 - \frac{1}{l} - \epsilon}} \right).$
    \end{enumerate}
\end{lemma}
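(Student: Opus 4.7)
The statement is a standard Dirichlet-series tail estimate given the absolute convergence established in Proposition \ref{value_global_mesure_mobius_fonction}. The plan is to fix an exponent $s$ slightly larger than $\frac{1}{f}$ and play off the convergence of $\sum_d \frac{|\mu(d)|}{\Pi(d)^s}$ against a trivial size estimate on $\Pi(d)^s$ or $\Pi(d)^{1-s}$ on the appropriate range.

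More concretely, fix $\varepsilon \in ]0, 1-\frac{1}{f}[$ and set $s = \frac{1}{f} + \varepsilon$. By Proposition \ref{value_global_mesure_mobius_fonction} the series $C := \sum_{d \in (\N^*)^{\Sigma(1)}} \frac{|\mu(d)|}{\Pi(d)^s}$ is finite. For part (1), on the range $\Pi(d) \leqslant Y$ the elementary inequality $\Pi(d)^s \leqslant Y^s$ gives
\[ |\mu(d)| = \frac{|\mu(d)|}{\Pi(d)^s}\cdot \Pi(d)^s \leqslant Y^s \cdot \frac{|\mu(d)|}{\Pi(d)^s}, \]
and summing over $\Pi(d)\leqslant Y$ then extending to the whole lattice yields $\sum_{\Pi(d)\leqslant Y}|\mu(d)| \leqslant C \cdot Y^{\frac{1}{f}+\varepsilon}$, as desired.

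For part (2), the same choice of $s$ works dually. On the range $\Pi(d)\geqslant Y$ we write
\[ \frac{|\mu(d)|}{\Pi(d)} = \frac{|\mu(d)|}{\Pi(d)^s}\cdot \frac{1}{\Pi(d)^{1-s}} \leqslant \frac{1}{Y^{1-s}}\cdot \frac{|\mu(d)|}{\Pi(d)^s}, \]
since $1-s = 1 - \frac{1}{f} - \varepsilon > 0$ by our hypothesis on $\varepsilon$. Summing and again bounding the partial sum by the full series $C$, we obtain $\sum_{\Pi(d)\geqslant Y} \frac{|\mu(d)|}{\Pi(d)} \leqslant C \cdot Y^{-(1-\frac{1}{f}-\varepsilon)}$.

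There is essentially no obstacle here: Proposition \ref{value_global_mesure_mobius_fonction} does all the substantive work (it packages the arithmetic of the primitive-collection combinatorics controlling $\mu$ into the abscissa of convergence $\frac{1}{f}$), and the lemma is the standard convex-duality extraction of pointwise/tail bounds from absolute convergence of a Dirichlet series. The only point worth double-checking is that $\varepsilon < 1 - \frac{1}{f}$ is used in part (2) to ensure the exponent $1-s$ is positive, so that the inequality $\Pi(d)^{-(1-s)} \leqslant Y^{-(1-s)}$ goes in the right direction when $\Pi(d) \geqslant Y$.
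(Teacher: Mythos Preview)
Your argument is correct and is exactly the standard extraction of partial-sum and tail bounds from absolute convergence of a Dirichlet series at an exponent $s=\tfrac{1}{f}+\varepsilon$; the paper itself does not give a proof but simply defers to \cite[lemma 11.19]{salberger_torsor}, where the same reasoning is used. There is nothing to add: your check that $1-s>0$ (needed for the inequality in part (2)) is the only point requiring the upper bound $\varepsilon<1-\tfrac{1}{f}$, and you have it.
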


\section{Lift of the number of rational points of the toric variety to its universal torsor}

The method of lifting the number of points to universal torsors have been used in  a particular case in \cite{peyre_duke}, then extended to toric varieties in \cite[section 11]{salberger_torsor} and generalised in \cite{methode_du_cercle_peyre},\cite{Peyre_beyond_height}.

%We can found the same heuristic in recent articles, for example in order to count cubic extensions of $\Q$ (\cite{bhargava_counting_cubic_field}).

We fix $u \in (\cone^{\vee})^{\circ}$ and $\mathrm{D} \subset \Pic(X)^{\vee}_{\R}$ a finite union of compact polyhedrons. Let $B > 1$ and write $\mathrm{D}_B = \mathrm{D} + \log(B) u $. We shall use that the map
$$ q : \stackT \rightarrow X$$is surjective. Using the lift of rational points to the universal torsor, we have by corollary \ref{simplification_of_the_height_to_univ_torsor}:
\begin{align*}
    \sharp X(\Q)_{h \in \mathrm{D}_B} &= \frac{1}{2^t} \sharp \{ y \in \stackT(\Z) \mid h_{\stackT,\infty}(y) \in \mathrm{D}_B \} \\
    &= \frac{1}{2^t} \sharp \left( \stackT(\Z) \right)_{h_{\stackT,\infty} \in \mathrm{D}_B}
\end{align*}

Using the Möbius inversion, we shall see that estimating $\sharp \left( \stackT(\Z) \right)_{h_{\stackT,\infty} \in \mathrm{D}_B}$ reduces to estimating:
$$\sharp \Bigl( \Z^{\Sigma(1)} \cap \stackT(\R) \Bigr)_{h_{\stackT,\infty} \in \mathrm{D}_B} .$$

\subsection{Estimation of the error term between the number of points of a lattice in a bounded domain and the volume of the domain}

In what follows, $\Vol$ means the usual euclidean volume given by the Lebesgue measures. The aim of this section is to relate $\sharp \Bigl( \Z^{\Sigma(1)} \cap \stackT(\R) \Bigr)_{h_{\stackT,\infty} \in \mathrm{D}_{B}}$ and the volume of $\mathcal{D}_B = \{ y \in \stackT(\R) \mid h_{\stackT,\infty}(y) \in \mathrm{D}_{B}\}$. To estimate the difference between this two terms, we shall use a result of geometry of numbers due to Davenport \cite{davenport_geometry_numbers}. The statement we shall use is from \cite[proposition 24]{bhargava_counting_cubic_field}:

\begin{prop}\label{bhargava_geometrie_nombre}
Let $\mathcal{D}$ be a bounded domain,  semi-algebraic of $\R^m$ which is defined by at most $k$ polynomial inequalities each having degree at most $l$.

Then the number of integral lattice points of $\Z^m$ contained in $\mathcal{D}$ is :

$$\Vol(\mathcal{D}) + O\bigl( \max(1,\Vol(\proj_I(\mathcal{D})) | I \in \mathfrak{P}(\{1,..,m\}),  \sharp 1 \leqslant I < m) \bigr)$$where $\Vol(\proj_I(\mathcal{D}))$ means the euclidean volume of the projection of $\mathcal{D}$ via 
\begin{align*}
    \proj_I : &\R^m \rightarrow \R^{ I} \\
    (x_1,..,&x_m) \longmapsto (x_i)_{i \in I} .
\end{align*}
The constant in the error term depends only on $m$, $k$, and $l$.

\end{prop}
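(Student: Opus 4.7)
The plan is to prove this by induction on the dimension $m$, using a slicing argument together with elementary semi-algebraic complexity bounds.

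\textbf{Base case} ($m = 1$): A bounded semi-algebraic subset of $\R$ defined by at most $k$ polynomial inequalities of degree at most $l$ is a finite union of at most $N(k,l)$ intervals, since the endpoints of these intervals lie among the real zeros of the defining polynomials and the number of such zeros is bounded in terms of $k,l$. On each interval $[a,b]$ the count of integers differs from $b - a$ by at most $1$, so summing over the at most $N(k,l)$ intervals gives $\sharp(\Z \cap \mathcal{D}) = \Vol(\mathcal{D}) + O(1)$ with implied constant depending only on $k,l$.

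\textbf{Inductive step}: Assume the result in dimension $m - 1$. For each $c \in \Z$ set $\mathcal{D}_c = \{x' \in \R^{m-1} \mid (x',c) \in \mathcal{D}\}$. By Tarski--Seidenberg the slice $\mathcal{D}_c$ is again semi-algebraic, defined by at most $k$ polynomial inequalities of degree at most $l$, so the inductive hypothesis applies slice by slice:
$$ \sharp(\Z^{m-1} \cap \mathcal{D}_c) = \Vol_{m-1}(\mathcal{D}_c) + O\!\left(\max\bigl(1,\Vol(\proj_J \mathcal{D}_c) : J \subsetneq \{1,\dots,m-1\},\ |J| \geq 1\bigr)\right). $$
Summing over $c \in \Z$ gives $\sharp(\Z^m \cap \mathcal{D}) = \sum_{c \in \Z} \Vol_{m-1}(\mathcal{D}_c) + E$, where the error $E$ is controlled as follows. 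Only finitely many slices are nonempty, and their number is bounded by the number of integers in the one-dimensional projection $\proj_{\{m\}}(\mathcal{D})$, which by the base case is $\Vol_1(\proj_{\{m\}}\mathcal{D}) + O(1)$. Moreover each projection $\proj_J \mathcal{D}_c$ is contained in $\proj_J \mathcal{D}$ (after identifying $J \subset \{1,\dots,m-1\}$ with its image in $\{1,\dots,m\}$), so every term in $E$ absorbs into the maximum over projections of $\mathcal{D}$ itself, as stated.

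It remains to compare $\sum_{c \in \Z} \Vol_{m-1}(\mathcal{D}_c)$ with $\Vol(\mathcal{D}) = \int_{\R} \Vol_{m-1}(\mathcal{D}_t)\, dt$ (Fubini). Writing $F(t) = \Vol_{m-1}(\mathcal{D}_t)$, the quantitative Oleinik--Petrovskii--Milnor--Thom bounds imply that $F$ is piecewise continuous with a number of pieces bounded in terms of $m,k,l$, and a standard estimate gives
$$ \left| \sum_{c \in \Z} F(c) - \int_{\R} F(t)\, dt \right| \leq \sup_{t \in \R} F(t) \cdot O(1) = O(\Vol_{m-1}(\proj_{\{1,\dots,m-1\}}\mathcal{D})), $$
since $F(t) \leq \Vol_{m-1}(\proj_{\{1,\dots,m-1\}}\mathcal{D})$ for every $t$. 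This error also fits within the stated bound.

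\textbf{Main obstacle}: The delicate step is keeping the implied constants uniform in the complexity parameters $k$ and $l$. Everything rests on the fact that a semi-algebraic set defined by $k$ inequalities of degree at most $l$ has a number of connected components bounded by a polynomial in $k$ and $l$ (and exponential in $m$); this is what makes the induction close with constants depending only on $m, k, l$. The Riemann-sum comparison for $F(t)$ is elementary once this complexity control is in place, because it reduces to bounding the total variation of a function with a uniformly bounded number of monotone pieces.
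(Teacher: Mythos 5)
The paper offers no proof of this proposition; it cites it as Proposition~24 of Bhargava's paper on cubic fields, which itself follows Davenport's classical geometry-of-numbers lemma. Your plan — induct on $m$ by slicing along the last coordinate — is indeed Davenport's route, but your inductive step contains a genuine gap at the point where you aggregate the slice errors.

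After applying the inductive hypothesis to each slice $\mathcal{D}_c$, the error you must sum has the form
$O\bigl(\max_J\max(1,\Vol(\proj_J\mathcal{D}_c))\bigr)$ over $c$, and the number of nonempty slices is $T \approx \Vol\bigl(\proj_{\{m\}}\mathcal{D}\bigr)$. You observe, correctly, that each individual term is bounded by $\max_J \Vol(\proj_J\mathcal{D})$, and then claim the sum ``absorbs into the maximum.'' It does not: a sum of $T$ terms each at most $M$ is bounded by $T\cdot M$, which is a \emph{product} of two projection volumes — roughly $\Vol(\proj_{\{m\}}\mathcal{D})\cdot\Vol(\proj_J\mathcal{D})$ — and this is not dominated by $\max_{I\subsetneq\{1,\dots,m\}}\Vol(\proj_I\mathcal{D})$ in general. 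A long thin box in $\R^2$ already defeats this: the product of its side lengths is its area, much larger than either side.

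What actually closes the induction is that $\sum_c\Vol(\proj_J\mathcal{D}_c)$ is not to be bounded term by term; it is itself a Riemann sum. For $J\subsetneq\{1,\dots,m-1\}$ one has the Fubini identity
$$
\Vol\bigl(\proj_{J\cup\{m\}}\mathcal{D}\bigr)=\int_{\R}\Vol\bigl(\proj_J\mathcal{D}_t\bigr)\,dt ,
$$
because the slice of $\proj_{J\cup\{m\}}\mathcal{D}$ at height $t$ is exactly $\proj_J\mathcal{D}_t$. The same piecewise-monotone Riemann-sum estimate you invoke for the top-dimensional term $\sum_c\Vol_{m-1}(\mathcal{D}_c)$ then gives
$\sum_c\Vol(\proj_J\mathcal{D}_c)=\Vol\bigl(\proj_{J\cup\{m\}}\mathcal{D}\bigr)+O\bigl(\sup_t\Vol(\proj_J\mathcal{D}_t)\bigr)$. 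Since $J\cup\{m\}$ is still a proper nonempty subset of $\{1,\dots,m\}$, every piece of the slice error is now bounded by (a fixed number of) projection volumes of $\mathcal{D}$ itself. This is the heart of Davenport's lemma and it is the step your write-up elides.

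Two smaller remarks. Specializing $x_m=c$ does not require Tarski--Seidenberg: the slice is cut out by the same $k$ polynomials with one variable evaluated, of degrees $\leqslant l$. Quantifier elimination and the Oleinik--Petrovskii--Milnor--Thom bounds enter when you take \emph{projections}, which you do need in order to guarantee that the sets $\proj_J\mathcal{D}_c$ fed to the next level of the induction still have complexity controlled by $m,k,l$ alone. And once the slice errors are handled correctly, the implied constants really do depend only on $m,k,l$, since at each level only a bounded number of projections and a bounded number of monotone pieces appear.
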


Each time we shall use this proposition in this article, the constant $m$, $k$ and $l$ will not vary. We shall apply this proposition to the semi-algebraic bounded set $\Psi(\mathcal{D}_1)$ where:
\begin{enumerate}
    \item $\mathcal{D}_1 = \{ y \in \stackT(\R) \mid h_{\stackT,\infty}(y) \in \mathrm{D}\} $
    \item $\Psi$ is the multiplication by an element $(a_{\rho})_{\rho \in \Sigma(1)} \in (\R_+^{\times})^{\Sigma(1)}$. In other words:
    \begin{align*}
        \Psi : \ &\R^{\Sigma(1)} \rightarrow \R^{\Sigma(1)} \\
        &(z_{\rho})_{\rho \in \Sigma(1)} \longmapsto \left( a_{\rho} . z_{\rho} \right)_{\rho \in \Sigma(1)}
    \end{align*}
\end{enumerate}Hence  we shall have $m = \sharp \Sigma(1)$, $k$ depends only on the choice of $\mathrm{D}$, the finite union of compact polyhedrons and $l$ depends on the choice of $\mathrm{D}$ and on the maximum of the degree of the monomial $X^D$ for $D \in \mathcal{M}_k$ and $k \in \{1,..,t\}$ (see notations from \ref{base_section_globale}). By this remark, applying the previous proposition gives that there exists $C_1 > 0$ and $C_2 > 0$, which does not depend on the choice of $\Phi$, such that:
\begin{align*}
    &\left| \sharp ( \Z^{\Sigma(1)} \cap \Phi(\mathcal{D}_1) ) - \left(\prod\limits_{\rho \in \Sigma(1)} a_{\rho} \right).\Vol(\mathcal{D}_1) \right| \\
    &\leqslant C_1 + C_2 .\max\limits_{I \subsetneq \Sigma(1) \mid I \neq \varnothing} \left( \left(\prod\limits_{\rho \in I} a_{\rho} \right) . \Vol(\proj_I(\mathcal{D}_1)) \right)
\end{align*}

For the rest of the article, we shall suppose that $B \geqslant 1$. Recall we have chosen a vector $u \in \left(\cone^{\vee}\right)^{\circ}$ and defined $\mathrm{D}_{B} = \mathrm{D} + \log(B) u$. 

\subsubsection{An upper bound on the coordinates}

The key point to estimate the error term is the following statement:

\begin{prop}\label{crucial_majoration}
    There exists a constant $C_{\mathrm{D}} > 0$ (depending on the compact subset $\mathrm{D}$) such that for any $y \in \mathcal{D}_{B}$, for any $\rho \in \Sigma(1)$, we have:
    
    $$ |y_{\rho}| \leqslant C_{\mathrm{D}} B^{ \langle [D_{\rho}],u \rangle } .$$ 
\end{prop}

From now on, we shall write by $B^{u}$ the unique element of $T_{\NS}(\R)$ given by $\exp(\log(B).u)$. Using lemma \ref{hauteur_locale_torseur_univ_et_action_de_tns}, we get that $$\mathcal{D}_{B} = B^{-u}. \mathcal{D}_1 .$$

Then to prove the previous proposition, we have to recall that the action of $B^{-u}$ on the coordinate associated to $\rho \in \Sigma(1)$ is given by multiplication by $B^{ \langle [D_{\rho}],u\rangle }$ by proposition \ref{element_important_du_tore_NS} and we can conclude using the following lemma:

\begin{lemma}\label{majoration_crucial_dans_un_compact}
    There exists a constant $C_{\mathrm{D}} > 0$ (depending on the compact subset $\mathrm{D}$) such that for any $y \in \mathcal{D}_{1}$, for any $\rho \in \Sigma(1)$, we have:
    
    $$ |y_{\rho}| \leq C_{\mathrm{D}} .$$
\end{lemma}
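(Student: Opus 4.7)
The plan is to use the $T_{\NS}(\R)$-equivariance of $h_{\stackT,\infty}$ (Lemma \ref{hauteur_locale_torseur_univ_et_action_de_tns}) to reduce the boundedness statement over $\mathcal{D}_1$ to a combination of two already-established compactness facts: (i) $h_{\stackT,\infty}^{-1}(\{0\})$ is compact in $\stackT(\R)$ (Corollary \ref{corollaire_compacite}), and (ii) $\mathrm{D}$ is a finite union of compact polyhedra, hence compact in $\Pic(X)^{\vee}_{\R}$.

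Concretely, I would start with an arbitrary $y \in \mathcal{D}_1$ and set $a = h_{\stackT,\infty}(y) \in \mathrm{D}$. I would then consider the translate $y_0 = e^{-a} . y$, and use Lemma \ref{hauteur_locale_torseur_univ_et_action_de_tns} to compute
\[
  h_{\stackT,\infty}(y_0) = \log_{T_{\NS},\infty}(e^{-a}) + h_{\stackT,\infty}(y) = -a + a = 0,
\]
so that $y_0$ lives in the compact set $h_{\stackT,\infty}^{-1}(\{0\})$. In particular, each coordinate $(y_0)_{\rho}$ is uniformly bounded by some constant $N_{\rho}$ independent of $y$.

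Next, I would use Proposition \ref{element_important_du_tore_NS}, which describes the $T_{\NS}$-action on the $\rho$-coordinate of $\affine^{\Sigma(1)}$: since $y = e^{a} . y_0$, we get
\[
  y_{\rho} = e^{\langle [D_{\rho}], a \rangle} (y_0)_{\rho}.
\]
As $a$ ranges over the compact set $\mathrm{D}$, the continuous linear functional $a \mapsto \langle [D_{\rho}], a \rangle$ is bounded above by some $M_{\rho}$ depending only on $\mathrm{D}$. Combining the two estimates yields $|y_{\rho}| \leq e^{M_{\rho}} N_{\rho}$, and taking $C = \max_{\rho \in \Sigma(1)} e^{M_{\rho}} N_{\rho}$ concludes the lemma.

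The main (minor) subtlety is just keeping track of the sign conventions in Lemma \ref{hauteur_locale_torseur_univ_et_action_de_tns} and in the exponential map $\exp : \Pic(X)^{\vee}_{\R} \to T_{\NS}(\R)$ of Section \ref{paragraph_action_Tns_coordonee}, so that the cancellation $h_{\stackT,\infty}(e^{-a}.y) = 0$ really holds and the coordinate-by-coordinate action is the one quoted. No new analytic input is needed beyond the compactness results and the equivariance already proved.
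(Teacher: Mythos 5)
Your proof is correct and essentially the same as the paper's: the paper exhibits the surjective continuous map $(y_0,v)\mapsto e^{v}.y_0$ from the compact set $h_{\stackT,\infty}^{-1}(\{0\})\times\mathrm{D}$ onto $\mathcal{D}_1$ and deduces that $\mathcal{D}_1$ is compact, whereas you prove that surjectivity by hand (writing $y=e^{a}.y_0$ with $a=h_{\stackT,\infty}(y)$) and push the bound coordinate-by-coordinate via Proposition~\ref{element_important_du_tore_NS}. Both rely on the same two compactness facts and on the $T_{\NS}$-equivariance of $h_{\stackT,\infty}$, so the difference is purely expository.
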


\begin{proof}
    We can define a surjective continuous map:
    \begin{align*}
        h_{\stackT,\infty}^{-1}(\{0\}) &\times \mathrm{D} \rightarrow \mathcal{D} \\
        (y&,v) \longmapsto e^{-v} . y
    \end{align*}
    By corollary \ref{corollaire_compacite}, $h_{\stackT,\infty}^{-1}(\{0\})$ is compact. Hence $\mathcal{D}_1$ is the image of a compact by a continuous map so it is compact. There exists a constant $C > 0$ such that $\mathcal{D}_1 \subset B_{|-|_{\infty}}(0,C)$.
\end{proof}

We get the following corollary of our theorem:

\begin{cor}
    For any $I$ non empty subset of $\Sigma(1)$, there exists a constant $C > 0$ such that:
    $$\Vol(\proj_I(\mathcal{D}_{B})) \leq C B^{ \langle \sum\limits_{\rho \in I} [D_{\rho}] , u \rangle } .$$
\end{cor}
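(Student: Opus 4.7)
The plan is to deduce this bound directly from Theorem \ref{crucial_majoration} by bounding the projection of $\mathcal{D}_B$ by an axis-aligned box, whose volume can then be computed explicitly.

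First I would observe that for any $y \in \mathcal{D}_B$, Theorem \ref{crucial_majoration} provides a constant $C > 0$ (depending only on $\mathrm{D}$) such that $|y_\rho| \leqslant C B^{\langle [D_\rho], u \rangle}$ for every $\rho \in \Sigma(1)$. In particular, applying the projection $\proj_I : \R^{\Sigma(1)} \to \R^I$ preserves this coordinate-wise bound on the coordinates indexed by $I$. Therefore
\[
\proj_I(\mathcal{D}_B) \subset \prod_{\rho \in I} \left[-C B^{\langle [D_\rho], u \rangle}, C B^{\langle [D_\rho], u \rangle}\right].
\]

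From this inclusion the volume estimate is immediate by monotonicity of the Lebesgue measure:
\[
\Vol(\proj_I(\mathcal{D}_B)) \leqslant \prod_{\rho \in I} 2 C B^{\langle [D_\rho], u \rangle} = (2C)^{|I|} B^{\langle \sum_{\rho \in I} [D_\rho], u \rangle}.
\]
Setting $C' = (2C)^{\sharp \Sigma(1)}$ (which dominates $(2C)^{|I|}$ for every non-empty $I \subset \Sigma(1)$) gives the desired bound with a single constant depending only on $\mathrm{D}$.

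There is no real obstacle here: the work has already been done in Theorem \ref{crucial_majoration} (via the compactness corollary \ref{corollaire_compacite} and the description \ref{element_important_du_tore_NS} of the action of $T_{\NS}$ on coordinates). The corollary is essentially just the observation that a coordinate-wise $L^\infty$ bound on a set translates into an explicit box volume bound on each coordinate projection.
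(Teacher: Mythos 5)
Your proof is correct and is exactly the argument the paper intends (the corollary is stated without proof as an immediate consequence of Theorem~\ref{crucial_majoration}): the coordinate-wise bound places $\mathcal{D}_B$ inside an axis-aligned box, so each projection lands in the corresponding sub-box, whose Lebesgue volume is the stated product. The only trivial caveat is that your uniform constant $(2C)^{\sharp\Sigma(1)}$ dominates $(2C)^{|I|}$ only if one first enlarges $C$ so that $2C\geqslant 1$, but this is harmless since the statement in any case allows the constant to depend on $I$.
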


We finish this paragraph by recalling the value of the volume $\Vol(\mathcal{D}_{B}) $ and by estimating the cardinal of $\sharp \Bigl( \Z^{\Sigma(1)} \cap \stackT(\R) \Bigr)_{h_{\stackT,\infty} \in \mathrm{D}_{B}}$.

\begin{theorem}
    $$\Vol(\mathcal{D}_{B}) =  \Vol(\mathcal{D}_1) B^{ \langle \w_{X}^{-1} , u \rangle } = 2^t. \mu_{X,\infty}(X(\R)). \nu(\mathrm{D}_1) B^{ \langle \w_{X}^{-1} , u \rangle } . $$
\end{theorem}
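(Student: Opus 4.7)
The plan is to split the claimed chain of equalities into two parts: first show $\Vol(\mathcal{D}_B) = \Vol(\mathcal{D}_1) \cdot B^{\langle \w_X^{-1}, u \rangle}$ by a direct change of variables, then invoke Proposition \ref{analysis_measure_univ_torsor_at_infty} for the identification with $2^t \cdot \mu_{X,\infty}(X(\R)) \cdot \nu(\mathrm{D}_1)$.

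For the first equality, I would start from the identity $\mathcal{D}_B = B^u \cdot \mathcal{D}_1$ that was already observed just before Theorem \ref{crucial_majoration}, which itself is a direct consequence of Lemma \ref{hauteur_locale_torseur_univ_et_action_de_tns} applied with $t = B^u = \exp(\log(B) \cdot u) \in T_{\NS}(\R)$ and the definition $\mathrm{D}_B = \mathrm{D}_1 + \log(B) u$. I would then describe explicitly the action of $B^u$ on $\R^{\Sigma(1)}$: by Proposition \ref{element_important_du_tore_NS}, $B^u$ acts on the coordinate indexed by $\rho \in \Sigma(1)$ by multiplication by $B^{\langle [D_\rho], u \rangle}$. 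Hence the map $y \mapsto B^u \cdot y$ from $\R^{\Sigma(1)}$ to itself is the diagonal linear isomorphism with Jacobian determinant
\[
\prod_{\rho \in \Sigma(1)} B^{\langle [D_\rho], u \rangle} = B^{\langle \sum_{\rho \in \Sigma(1)} [D_\rho], u \rangle} = B^{\langle \w_X^{-1}, u \rangle},
\]
where the last equality uses the formula $\w_X^{-1} = \sum_{\rho \in \Sigma(1)} [D_\rho]$ recalled in \ref{notations_varietes_toriques}. By Remark \ref{intrepretation_mesure_universal_torsor}, $\w_{\stackT,\infty}$ is the restriction of the Lebesgue measure on $\R^{\Sigma(1)}$ to $\stackT(\R)$, so the usual change of variables formula yields
\[
\Vol(\mathcal{D}_B) = \Vol(B^u \cdot \mathcal{D}_1) = B^{\langle \w_X^{-1}, u \rangle} \cdot \Vol(\mathcal{D}_1).
\]

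For the second equality, I would simply apply Proposition \ref{analysis_measure_univ_torsor_at_infty} to the compact subset $\mathrm{D}_1 \subset \Pic(X)^{\vee}_{\R}$ (which is a finite union of compact polyhedra by the hypothesis on $\mathrm{D}_1$, hence compact), giving
\[
\Vol(\mathcal{D}_1) = \w_{\stackT,\infty}(\mathcal{D}_1) = 2^t \cdot \mu_{X,\infty}(X(\R)) \cdot \nu(\mathrm{D}_1),
\]
and then multiply by $B^{\langle \w_X^{-1}, u \rangle}$.

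There is essentially no obstacle here: both steps reduce to results already established in the previous paragraphs (Proposition \ref{element_important_du_tore_NS} for the coordinatewise action, and Proposition \ref{analysis_measure_univ_torsor_at_infty} for the volume of $\mathcal{D}_1$). The only point that deserves care is to make explicit that the change of variables takes place on the ambient $\R^{\Sigma(1)}$ rather than on $\stackT(\R)$ itself — this is legitimate because $B^u$ stabilises $\stackT(\R)$ (as $T_{\NS}$ acts on $\stackT$), so $B^u \cdot \mathcal{D}_1 \subset \stackT(\R)$, and Remark \ref{intrepretation_mesure_universal_torsor} reduces the computation of $\w_{\stackT,\infty}$ to the usual Lebesgue volume.
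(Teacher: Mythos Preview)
Your proposal is correct and follows essentially the same approach as the paper: both rely on Remark \ref{intrepretation_mesure_universal_torsor} to identify $\w_{\stackT,\infty}$ with Lebesgue measure and on Proposition \ref{analysis_measure_univ_torsor_at_infty} for the identification $\Vol(\mathcal{D}_1) = 2^t \mu_{X,\infty}(X(\R))\,\nu(\mathrm{D}_1)$. The only difference is that for the first equality you spell out the Jacobian argument via Proposition \ref{element_important_du_tore_NS} and $\w_X^{-1} = \sum_{\rho}[D_\rho]$, whereas the paper leaves this implicit (one could equally apply Proposition \ref{analysis_measure_univ_torsor_at_infty} to $\mathrm{D}_B$ and use the translation behaviour of $\nu$); a minor inaccuracy is that the identity $\mathcal{D}_B = B^u \cdot \mathcal{D}_1$ appears just \emph{after} the statement of Theorem \ref{crucial_majoration}, not before.
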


\begin{proof}
    By remark \ref{intrepretation_mesure_universal_torsor}, the Lebesgue measure over $\R^{\Sigma(1)}$ restricted to $\stackT(\R)$ is $\w_{\stackT,\infty}$. So the theorem is a consequence of proposition \ref{analysis_measure_univ_torsor_at_infty}.
\end{proof}

Recall that as we take $u \in (\cone^{\vee})^{\circ}$  i.e. for any $L \in \cone$, $\langle L,u \rangle > 0 $, then we have that: 

$$\max\limits_{ I \subsetneq \Sigma(1) \mid I \neq \varnothing } \Vol(\proj_I(\mathcal{D}_B)) = O\left( \frac{B^{\langle \w_X^{-1},u\rangle}}{B^{ \min\limits_{\rho \in \Sigma(1)} \langle [D_{\rho}],u\rangle } } \right)$$which is negligible compared to $\Vol(\mathcal{D}_B)$ when $B$ grows. We can then deduce the following theorem using proposition \ref{bhargava_geometrie_nombre}:

\begin{theorem}\label{estimation_lattice}
    There exists a constant $C > 0$ such that:
    $$\Big| \sharp \Bigl( \Z^{\Sigma(1)} \cap T_{\Sigma(1)}(\R) \Bigr)_{h_{\stackT,\infty} \in \mathrm{D}_{B}}  - 2^t . \mu_{X,\infty}(X(\R)). \nu(\mathrm{D}_1) B^{ \langle \w_{X}^{-1} , u \rangle } \Big| \leqslant C \frac{B^{\langle \w_X^{-1},u\rangle}}{B^{ \min\limits_{\rho \in \Sigma(1)} \langle [D_{\rho}],u\rangle } } $$
\end{theorem}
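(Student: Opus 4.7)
The plan is to apply Proposition~\ref{bhargava_geometrie_nombre} directly to the bounded set $\mathcal{D}_B = \{y \in \stackT(\R) \mid h_{\stackT,\infty}(y) \in \mathrm{D}_B\}$ viewed as a subset of $\R^{\Sigma(1)}$, and then combine the volume formula already established with the projection bound coming from the corollary of Theorem~\ref{crucial_majoration}. By Proposition~\ref{element_important_du_tore_NS} and the identity $\mathcal{D}_B = B^u \cdot \mathcal{D}_1$, the set $\mathcal{D}_B$ is the image of $\mathcal{D}_1$ under the diagonal scaling $\Phi_B : (z_\rho) \mapsto (B^{\langle [D_\rho], u\rangle} z_\rho)$. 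Using Theorem~\ref{description_universal_torsor_toric_varieties}, the condition $h_{\stackT,\infty}(y) \in \mathrm{D}_1$ unfolds into a bounded number of polynomial inequalities of bounded degree in the coordinates $(y_\rho)$, with both the number of inequalities and their degrees depending only on $\mathrm{D}_1$ and on the monomials $X^D$ for $D \in \mathcal{M}_i$, not on $B$. Hence $\mathcal{D}_B = \Phi_B(\mathcal{D}_1)$ is semi-algebraic with parameters $m = \sharp \Sigma(1)$, $k$, $l$ uniform in $B$, as required by Proposition~\ref{bhargava_geometrie_nombre}.

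For the main term, the Jacobian of $\Phi_B$ is $\prod_{\rho \in \Sigma(1)} B^{\langle [D_\rho], u \rangle} = B^{\langle \w_X^{-1}, u \rangle}$, so
$$\Vol(\mathcal{D}_B) = B^{\langle \w_X^{-1}, u \rangle} \Vol(\mathcal{D}_1) = 2^t \cdot \mu_{X,\infty}(X(\R)) \cdot \nu(\mathrm{D}_1) \cdot B^{\langle \w_X^{-1}, u \rangle}$$
by Proposition~\ref{analysis_measure_univ_torsor_at_infty}. For the error term, Proposition~\ref{bhargava_geometrie_nombre} gives a bound of the form $C_1 + C_2 \max_{I} \Vol(\proj_I(\mathcal{D}_B))$ where $I$ ranges over non-empty proper subsets of $\Sigma(1)$. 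By the corollary just preceding the theorem, each such projection volume satisfies $\Vol(\proj_I(\mathcal{D}_B)) \leqslant C \cdot B^{\langle \sum_{\rho \in I}[D_\rho], u \rangle}$. Since $u \in (C_{\eff}(X)^\vee)^\circ$, we have $\langle [D_\rho], u\rangle > 0$ for every $\rho$, so the maximum over proper subsets is realised by $I = \Sigma(1) \setminus \{\rho_0\}$, where $\rho_0$ minimises $\langle [D_\rho], u\rangle$. This yields exactly the bound $C \cdot B^{\langle \w_X^{-1}, u\rangle - \min_\rho \langle [D_\rho], u\rangle}$.

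There remains the mismatch between $\Z^{\Sigma(1)} \cap \stackT(\R)$ and $\Z^{\Sigma(1)} \cap T_{\Sigma(1)}(\R)$: lattice points in $\mathcal{D}_B$ with a vanishing coordinate $y_{\rho_1} = 0$ lie in the image under $\proj_{\Sigma(1) \setminus \{\rho_1\}}$, and their count is bounded by $\Vol(\proj_{\Sigma(1) \setminus \{\rho_1\}}(\mathcal{D}_B)) + (\text{lower-dimensional terms})$, which is already absorbed in the error term above. Hence passing from counting in $\stackT(\R)$ to counting in $T_{\Sigma(1)}(\R)$ does not change the estimate.

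The main obstacle is simply ensuring that the semi-algebraic complexity of $\mathcal{D}_1$ is bounded independently of $B$: this is where the explicit description $H_{L_i}(q(y)) = \max_{D \in \mathcal{M}_i}|y^D|$ from Theorem~\ref{description_universal_torsor_toric_varieties} is crucial, as it expresses the defining inequalities of $\mathcal{D}_1$ as a finite collection of polynomial inequalities of degrees bounded by $\max_{i,D \in \mathcal{M}_i} \deg(X^D)$. Everything else is an application of Proposition~\ref{bhargava_geometrie_nombre} combined with the estimates gathered above.
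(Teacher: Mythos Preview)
Your proof is correct and follows essentially the same approach as the paper: apply Proposition~\ref{bhargava_geometrie_nombre} to $\mathcal{D}_B = \Phi_B(\mathcal{D}_1)$, use Proposition~\ref{analysis_measure_univ_torsor_at_infty} for the main term, and the corollary of Theorem~\ref{crucial_majoration} to control the projection volumes. The only organisational difference is that you absorb the discrepancy between $\stackT(\R)$ and $T_{\Sigma(1)}(\R)$ directly into the Davenport error term, whereas the paper treats it separately in the next subsection via a direct box bound (Theorem~\ref{number_of_points_complementary_of_the_torus_universal_torsor}); both arguments yield the same error and are equally valid.
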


\subsubsection{Upper bound for the number of points in the complementary of the torus}

Applying lemma \ref{majoration_crucial_dans_un_compact}, we have that there exists $N > 1$ such that $\mathcal{D}_1 \subset \prod\limits_{\rho \in \Sigma(1)} [-N,N]$. Hence $\mathcal{D}_B \subset  \prod\limits_{\rho \in \Sigma(1)} \left[ -N.B^{\langle [D_{\rho}] , u \rangle},N.B^{\langle [D_{\rho}] , u \rangle} \right] $, so we have for any $\rho \in \Sigma(1)$,
\begin{align*}
    \sharp\left(\Z^{\Sigma(1)} \cap \stackT(\R) \cap \{y_{\rho} = 0 \} \right)_{h_{\stackT,\infty} \in \mathrm{D}_B} &\leqslant \prod\limits_{\rho' \in \Sigma(1)-\{\rho\}} \left( 2.\lfloor N.B^{\langle [D_{\rho}] , u \rangle} \rfloor + 1 \right) \\
    &\leqslant \prod\limits_{\rho' \in \Sigma(1)-\{\rho\}} \left( 2 N.B^{\langle [D_{\rho}] , u \rangle} + 1 \right)
\end{align*}Hence there exists a constant $C > 0$ such that:
$$\sharp\left(\Z^{\Sigma(1)} \cap \stackT(\R) \cap \{y_{\rho} = 0 \} \right)_{h_{\stackT,\infty} \in \mathrm{D}_B} \leqslant C. B^{\langle \sum\limits_{\rho \neq \rho'} [D_{\rho'}] , u \rangle} .$$We get the following theorem:

\begin{theorem}\label{number_of_points_complementary_of_the_torus_universal_torsor}
    $$ \sharp\left(\Z^{\Sigma(1)} \cap (\stackT(\R) \setminus T_{\Sigma(1)}(\R)) \right)_{h_{\stackT,\infty} \in \mathrm{D}_B} = O\left( \frac{B^{\langle \w_{X}^{-1} , u \rangle}}{B^{\min\limits_{\rho \in \Sigma(1)} \langle [D_{\rho}] , u \rangle }} \right)$$
\end{theorem}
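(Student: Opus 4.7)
The plan is to bound the complement of the torus as a finite union of coordinate hyperplane slices, apply the per-slice estimate already displayed just above the theorem, and collapse the resulting exponents using the identity $\w_X^{-1} = \sum_{\rho \in \Sigma(1)} [D_\rho]$.

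First I would observe that $T_{\Sigma(1)} = \G_m^{\Sigma(1)}$ is precisely the locus in $\stackT$ where every coordinate $y_\rho$ is nonzero, so
$$ \stackT(\R) \setminus T_{\Sigma(1)}(\R) = \bigcup_{\rho \in \Sigma(1)} \bigl( \stackT(\R) \cap \{y_\rho = 0\} \bigr), $$
and a union bound reduces the left-hand side of the theorem to the sum over $\rho \in \Sigma(1)$ of the cardinalities of the individual slices. The inequality displayed immediately before the theorem furnishes a constant $C > 0$, uniform in $\rho$, such that each such slice contributes at most $C \cdot B^{\langle \sum_{\rho' \neq \rho} [D_{\rho'}], u \rangle}$.

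Next, using $\w_X^{-1} = \sum_{\rho \in \Sigma(1)} [D_\rho]$ from notations \ref{notations_varietes_toriques}, the exponent rewrites as $\langle \w_X^{-1}, u \rangle - \langle [D_\rho], u \rangle$. Since $u \in (\cone^{\vee})^\circ$ and every $[D_\rho]$ lies in $\cone$, each pairing $\langle [D_\rho], u \rangle$ is strictly positive, and majorizing by the smallest of them yields the bound
$$ C \cdot \frac{B^{\langle \w_X^{-1}, u \rangle}}{B^{\min_{\rho' \in \Sigma(1)} \langle [D_{\rho'}], u \rangle}} $$
for each slice. Summing over the finitely many $\rho \in \Sigma(1)$ absorbs the factor $\sharp\Sigma(1)$ into the implied constant, producing exactly the stated $O$-estimate.

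I do not expect any real obstacle here. The geometric content (the compactness of $h_{\stackT,\infty}^{-1}(\{0\})$ and the uniform coordinate bound of lemma \ref{majoration_crucial_dans_un_compact}) is already encapsulated in the displayed inequality preceding the theorem, so what remains is only a routine bookkeeping of exponents, together with the elementary positivity statement $\langle [D_\rho], u \rangle > 0$ coming from $u \in (\cone^{\vee})^\circ$.
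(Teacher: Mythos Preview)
Your proposal is correct and follows exactly the paper's approach: the paper establishes the per-slice bound $\sharp\left(\Z^{\Sigma(1)} \cap \stackT(\R) \cap \{y_{\rho} = 0 \} \right)_{h_{\stackT,\infty} \in \mathrm{D}_B} \leqslant C \cdot B^{\langle \sum_{\rho' \neq \rho} [D_{\rho'}] , u \rangle}$ in the text immediately preceding the theorem, and the theorem is then stated as a direct consequence via the union-over-hyperplanes decomposition and the identity $\w_X^{-1} = \sum_{\rho} [D_\rho]$, exactly as you describe.
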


Applying the previous theorem and theorem \ref{estimation_lattice}, we get that:

\begin{cor}
    There exists a constant $C_{\mathrm{D}} > 0$ such that:
    $$\Big| \sharp \Bigl( \Z^{\Sigma(1)} \cap \stackT(\R) \Bigr)_{h_{\stackT,\infty} \in \mathrm{D}_{B}}  - 2^t . \mu_{X,\infty}(X(\R)). \nu(\mathrm{D}_1) B^{ \langle \w_{X}^{-1} , u \rangle } \Big| \leqslant C_{\mathrm{D}} \frac{B^{\langle \w_X^{-1},u\rangle}}{B^{ \min\limits_{\rho \in \Sigma(1)} \langle [D_{\rho}],u\rangle } } .$$

\end{cor}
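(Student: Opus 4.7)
The plan is essentially to decompose the counting problem into the contribution from the open torus and its complement inside $\stackT(\R)$, then combine the two existing estimates by the triangle inequality. Since $T_{\Sigma(1)}(\R) \subset \stackT(\R)$ and the complement $\stackT(\R) \setminus T_{\Sigma(1)}(\R)$ consists of the points having at least one vanishing coordinate, these two sets form a disjoint partition of $\stackT(\R)$. Intersecting with $\Z^{\Sigma(1)}$ and restricting to the condition $h_{\stackT,\infty} \in \mathrm{D}_B$ preserves the partition, so we obtain the identity
$$\sharp \Bigl( \Z^{\Sigma(1)} \cap \stackT(\R) \Bigr)_{h_{\stackT,\infty} \in \mathrm{D}_{B}} = \sharp \Bigl( \Z^{\Sigma(1)} \cap T_{\Sigma(1)}(\R) \Bigr)_{h_{\stackT,\infty} \in \mathrm{D}_{B}} + \sharp\left(\Z^{\Sigma(1)} \cap (\stackT(\R) \setminus T_{\Sigma(1)}(\R)) \right)_{h_{\stackT,\infty} \in \mathrm{D}_B}.$$

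Next I would apply Theorem \ref{estimation_lattice} to the first summand on the right-hand side, which gives a constant $C_1 > 0$ such that the distance between it and $2^t \mu_{X,\infty}(X(\R)) \nu(\mathrm{D}_1) B^{\langle \w_X^{-1},u\rangle}$ is at most $C_1 \cdot B^{\langle \w_X^{-1},u\rangle} / B^{\min_{\rho} \langle [D_{\rho}],u\rangle}$. Then Theorem \ref{number_of_points_complementary_of_the_torus_universal_torsor} provides a constant $C_2 > 0$ with the second summand bounded directly by $C_2 \cdot B^{\langle \w_X^{-1},u\rangle}/B^{\min_{\rho}\langle [D_{\rho}],u\rangle}$. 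The triangle inequality then yields the corollary with $C = C_1 + C_2$.

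There is no real obstacle here since this is a direct assembly of two results already established in the preceding theorems; the only conceptual remark is that the error term from the torus contribution and the size of the complementary contribution are of the same order of magnitude in $B$, namely $B^{\langle \w_X^{-1},u\rangle - \min_{\rho}\langle [D_{\rho}],u\rangle}$. Because $u$ lies in the interior of the dual effective cone, each $\langle [D_{\rho}],u\rangle$ is strictly positive, so this exponent is strictly less than $\langle \w_X^{-1},u\rangle$, ensuring the error is genuinely negligible compared to the main term. Thus combining the two bounds is clean and gives exactly the stated estimate.
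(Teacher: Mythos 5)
Your argument is correct and is precisely the paper's intended proof: the paper simply says ``Applying the previous theorem and theorem \ref{estimation_lattice}, we get that:'' and leaves the disjoint decomposition of $\stackT(\R)$ into $T_{\Sigma(1)}(\R)$ and its complement plus the triangle inequality implicit. Your write-up makes this assembly explicit, including the correct observation that both contributions to the error are of the same order $B^{\langle \w_X^{-1},u\rangle - \min_{\rho}\langle [D_{\rho}],u\rangle}$.
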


As $\sharp\left(X(\Q) \setminus T(\Q)) \right)_{h \in \mathrm{D}_B} \leqslant \sharp\left(\Z^{\Sigma(1)} \cap (\stackT(\R) \setminus T_{\Sigma(1)}(\R)) \right)_{h_{\stackT,\infty} \in \mathrm{D}_B}$, we can also deduce the following corollary:

\begin{cor}\label{number_of_points_complementary_of_the_torus_variety}
    $$ \sharp\left(X(\Q) \setminus T(\Q)) \right)_{h \in \mathrm{D}_B} = O\left( \frac{B^{\langle \w_{X}^{-1} , u \rangle}}{B^{\min\limits_{\rho \in \Sigma(1)} \langle [D_{\rho}] , u \rangle }} \right) .$$
\end{cor}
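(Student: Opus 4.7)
The plan is to reduce this corollary to Theorem~\ref{number_of_points_complementary_of_the_torus_universal_torsor} by lifting rational points of $X$ to integral points of the universal torsor $\stackT$. The sentence preceding the statement essentially gives the argument, and the only thing that needs to be justified is the inequality $\sharp (X(\Q) \setminus T(\Q))_{h \in \mathrm{D}_B} \leqslant \sharp ( \Z^{\Sigma(1)} \cap (\stackT(\R) \setminus T_{\Sigma(1)}(\R)))_{h_{\stackT,\infty} \in \mathrm{D}_B}$.

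The first step is to use the surjectivity $q(\stackT(\Z)) = X(\Z)$ recalled just after Proposition~\ref{universal_torsor_over_toric_variety}, combined with the fact that $X$ is projective (so $X(\Z) = X(\Q)$). This lets us choose, for every $P \in X(\Q)$, a lift $y_P \in \stackT(\Z)$. The map $P \mapsto y_P$ is then an injection $X(\Q) \hookrightarrow \stackT(\Z) \subset \Z^{\Sigma(1)}$.

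The second step is to check that this injection respects the complement of the torus and the height condition. Since $q^{-1}(T) = T_{\Sigma(1)}$ (noted in the notations after Proposition~\ref{universal_torsor_over_toric_variety}), if $P \in X(\Q) \setminus T(\Q)$ then $y_P \in \stackT(\Z) \setminus T_{\Sigma(1)}(\Z) \subset \stackT(\R) \setminus T_{\Sigma(1)}(\R)$. Moreover, Corollary~\ref{simplification_of_the_height_to_univ_torsor} gives $h_{\stackT,\infty}(y_P) = h(q(y_P)) = h(P)$, so the condition $h(P) \in \mathrm{D}_B$ translates to $h_{\stackT,\infty}(y_P) \in \mathrm{D}_B$. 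Combining these two observations yields the desired inequality, and Theorem~\ref{number_of_points_complementary_of_the_torus_universal_torsor} then supplies the $O$-bound.

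There is essentially no obstacle here: all the real work was done in Theorem~\ref{number_of_points_complementary_of_the_torus_universal_torsor}, which bounded the integral lattice points of $\stackT(\R) \setminus T_{\Sigma(1)}(\R)$ via a coordinate-by-coordinate box estimate coming from Lemma~\ref{majoration_crucial_dans_un_compact}. The corollary is merely the transfer of that bound back down to $X(\Q)$, made painless by the fact that the local-logarithm height on $\stackT$ at infinity agrees with the multi-height on $X$ for integral lifts.
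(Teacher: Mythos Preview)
Your proposal is correct and follows exactly the paper's own route: the paper simply asserts the inequality $\sharp(X(\Q)\setminus T(\Q))_{h\in\mathrm{D}_B}\leqslant \sharp(\Z^{\Sigma(1)}\cap(\stackT(\R)\setminus T_{\Sigma(1)}(\R)))_{h_{\stackT,\infty}\in\mathrm{D}_B}$ and invokes Theorem~\ref{number_of_points_complementary_of_the_torus_universal_torsor}, while you have spelled out why that inequality holds (lift via $q(\stackT(\Z))=X(\Z)=X(\Q)$, use $q^{-1}(T)=T_{\Sigma(1)}$, and match heights through Corollary~\ref{simplification_of_the_height_to_univ_torsor}). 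One minor remark: the surjectivity $q(\stackT(\Z))=X(\Z)$ is recalled at the end of \S2.1, not right after Proposition~\ref{universal_torsor_over_toric_variety}.
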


\subsection{Extension of our estimation in order to apply the Möbius inversion}

Let $d \in (\N^*)^{\Sigma(1)}$ and write $\Lambda_d = \bigoplus\limits_{\rho \in \Sigma(1)} d_{\rho}.\Z$. We shall denote $N_d(B)$ the cardinal of the set $\Big( \Lambda_d \cap T_{\Sigma(1)}(\R) \Big)_{h_{\stackT,\infty} \in \mathrm{D}_{B}}$. We want to extend the estimation made in theorem \ref{estimation_lattice} to $N_d(B)$.

Denote by $\psi : \R^{\Sigma(1)} \rightarrow \R^{\Sigma(1)}$ the diagonal endomorphism given by multiplication by $d_{\rho}$ the coordinate associated to the edge $\rho$. Recall that we have $\covol(\Lambda_d) = \Pi(d)$. We can interpret $N_d(B)$ as follows:
    \begin{align*}
        N_d(B) &= \sharp \left\{ y \in \Z^{\Sigma(1)} \mid h_{\stackT,\infty}\left( \psi(y)  \right) \in \mathrm{D}_B \text{ and } \psi(y) \in T_{\Sigma(1)}(\R) \right\} \\
        &= \sharp \left\{ y \in \Z^{\Sigma(1)} \mid h_{\stackT,\infty}\left( B^{u}. \psi(y) \right) \in \mathrm{D} \text{ and } B^{u}. \psi(y) \in T_{\Sigma(1)}(\R) \right\} \\
        %&= \sharp \left\{ y \in \Z^{\Sigma(1)} \mid h_{\stackT,\infty}\left( B^{-u}. \varphi(y) \right) \in \mathrm{D} \text{ and } y \in T_{\Sigma(1)}(\R) \right\}
    \end{align*}
We write $\mathcal{D}_1 = \{ y \in T_{\Sigma(1)}(\R) \mid h_{\stackT,\infty}(y) \in \mathrm{D} \}$. We can then deduce that $$N_d(B) = \sharp \left( \Z^{\Sigma(1)} \cap \Psi(\mathcal{D}_1) \right)$$ where $\Psi(z) = \psi^{-1} ( B^{-u}. z ) $. Using this description, we begin by giving an upper bound of $N_d(B)$:

\begin{prop}\label{upper_bound_N_d(B)}
    For any $B > 1$, we have:
    $$N_d(B) = O\left( \frac{B^{\langle \w_{X}^{-1} , u \rangle}}{\Pi(d)} \right) .$$
\end{prop}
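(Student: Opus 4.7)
The plan is simply to bound each coordinate of a lattice point separately using Theorem \ref{crucial_majoration}, then multiply the bounds across edges. No integration or lattice-point theorem is actually needed for this soft upper bound.

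By Theorem \ref{crucial_majoration}, there exists $C > 0$ such that $|y_{\rho}| \leqslant C B^{\langle [D_{\rho}],u\rangle}$ for every $y \in \mathcal{D}_B$ and every $\rho \in \Sigma(1)$. Any point counted by $N_d(B)$ lies in $\Lambda_d \cap T_{\Sigma(1)}(\R) \cap \mathcal{D}_B$, so each coordinate $y_{\rho}$ is a \emph{nonzero} integer multiple of $d_{\rho}$ with $|y_{\rho}| \leqslant C B^{\langle [D_{\rho}],u\rangle}$; the number of such integers equals $2 \lfloor C B^{\langle [D_{\rho}],u\rangle}/d_{\rho}\rfloor$.

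If there exists $\rho$ with $d_{\rho} > C B^{\langle [D_{\rho}],u\rangle}$, this count is zero and hence $N_d(B) = 0$, which trivially satisfies the claim. Otherwise each factor is bounded by $2 C B^{\langle [D_{\rho}],u\rangle}/d_{\rho}$; taking the product over $\rho \in \Sigma(1)$ and invoking the identity $\w_{X}^{-1} = \sum_{\rho \in \Sigma(1)} [D_{\rho}]$ from Notations \ref{notations_varietes_toriques} yields
$$ N_d(B) \leqslant \prod_{\rho \in \Sigma(1)} \frac{2 C B^{\langle [D_{\rho}],u\rangle}}{d_{\rho}} = (2C)^m \cdot \frac{B^{\langle \w_X^{-1},u\rangle}}{\Pi(d)}, $$
which is the desired estimate.

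I do not foresee a serious obstacle: the statement reduces immediately to the coordinatewise a priori estimate of Theorem \ref{crucial_majoration} combined with the trivial count of lattice points in an interval. One could alternatively apply Proposition \ref{bhargava_geometrie_nombre} to the dilate $\Phi(\mathcal{D}_1)$ as in the proof of Theorem \ref{estimation_lattice}, but controlling the projection volumes $\prod_{\rho \in I} (B^{\langle [D_{\rho}],u\rangle}/d_{\rho})$ uniformly in $d$ would still require the same dichotomy ``either some $d_{\rho}$ is too large and $N_d(B)=0$, or all coordinates are small enough'', so the direct argument above is cleaner.
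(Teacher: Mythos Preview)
Your proof is correct and follows essentially the same approach as the paper: both bound each coordinate of a counted point inside an interval of length $O(B^{\langle [D_{\rho}],u\rangle}/d_{\rho})$ and take the product over $\rho \in \Sigma(1)$. The only cosmetic difference is that the paper rescales via $\Phi$ to count standard lattice points in $\Phi(\mathcal{D}_1)$ (invoking Lemma~\ref{majoration_crucial_dans_un_compact} directly), whereas you stay in $\mathcal{D}_B$ and count $\Lambda_d$-points (invoking the derived Theorem~\ref{crucial_majoration}); your explicit dichotomy on whether some $d_{\rho}$ exceeds $C B^{\langle [D_{\rho}],u\rangle}$ is exactly the content of the paper's subsequent Remark~\ref{remark_nulitte_N_d(B)}.
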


\begin{proof}
    Again by lemma \ref{majoration_crucial_dans_un_compact}, there exists $N > 1$ such that $\mathcal{D}_1 \subset \prod\limits_{\rho \in \Sigma(1)} \left( [-N,N] - \{0\} \right)$. Hence $\Psi(\mathcal{D}_1) \subset  \prod\limits_{\rho \in \Sigma(1)} \left( \left[ -N.\frac{B^{\langle [D_{\rho}] , u \rangle}}{d_{\rho}},N.\frac{B^{\langle [D_{\rho}] , u \rangle}}{d_{\rho}} \right] - \{0\} \right)$. So we have that:
    $$N_d(B) = \sharp(\Z^{\Sigma(1)} \cap \Psi(\mathcal{D}_1)) \leqslant 2^{\Sigma(1)}.\prod\limits_{\rho \in \Sigma(1)} \Big\lfloor N.\frac{B^{\langle [D_{\rho}] , u \rangle}}{d_{\rho}} \Big\rfloor \leqslant (2.N)^{\sharp \Sigma(1)} . \frac{B^{\langle \w_X^{-1},u\rangle}}{\Pi(d)} .$$
\end{proof}

\begin{remark}\label{remark_nulitte_N_d(B)}
    One may remark using the previous proof that $N_d(B) = 0$ when there exists $\rho \in \Sigma(1)$ such that $N . B^{\langle [D_{\rho}] , u \rangle} < d_{\rho}$.
\end{remark}

Now we shall later need a better estimation of $N_d(B)$ when $\Pi(d) \leqslant B^{\min\limits_{\rho \in \Sigma(1)} \langle [D_{\rho}] , u \rangle}$. It is given by the following theorem:

\begin{theorem}\label{estimation_N_d(B)}
    There exists a constant $C_{\mathrm{D}} > 0$ (depending on the choice of $\mathrm{D})$ such that:
    $$\left|  N_d(B) - \frac{1}{\Pi(d)}.2^t.\mu_{X,\infty}(X(\R)). \nu(\mathrm{D}_1) B^{ \langle \w_{X}^{-1} , u \rangle } \right| \leqslant C_{\mathrm{D}} .\frac{B^{\langle \w_X^{-1},u\rangle}}{B^{ \min\limits_{\rho \in \Sigma(1)} \langle [D_{\rho}],u\rangle } } .$$
\end{theorem}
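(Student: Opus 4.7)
The plan is to repeat the Davenport/Bhargava estimate of Proposition \ref{bhargava_geometrie_nombre}, applied this time to the body $\Phi(\mathcal{D}_1)$, where $\mathcal{D}_1 = \{ y \in T_{\Sigma(1)}(\R) \mid h_{\stackT,\infty}(y) \in \mathrm{D} \}$ and $\Phi$ is coordinatewise multiplication by $a_\rho := B^{\langle [D_\rho], u \rangle}/d_\rho$. As in the proof of Proposition \ref{upper_bound_N_d(B)} one has $N_d(B) = \sharp\bigl(\Z^{\Sigma(1)} \cap \Phi(\mathcal{D}_1)\bigr)$, and since the linear map $\Phi$ preserves the combinatorial complexity $(m,k,l)$ of the semi-algebraic description of $\mathcal{D}_1$, the bound recalled just before Theorem \ref{crucial_majoration} provides constants $C_1, C_2 > 0$ independent of $d$ and $B$ such that
\begin{equation*}
\left| N_d(B) - \Bigl(\prod_{\rho \in \Sigma(1)} a_\rho\Bigr) \Vol(\mathcal{D}_1) \right| \leqslant C_1 + C_2 \max_{\varnothing \neq I \subsetneq \Sigma(1)} \Bigl(\prod_{\rho \in I} a_\rho\Bigr) \Vol(\proj_I(\mathcal{D}_1)).
\end{equation*}

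Identifying the main term is immediate: a direct computation gives $\prod_{\rho} a_\rho = B^{\langle \w_X^{-1}, u \rangle}/\Pi(d)$, while Proposition \ref{analysis_measure_univ_torsor_at_infty} yields $\Vol(\mathcal{D}_1) = 2^t\, \mu_{X,\infty}(X(\R))\, \nu(\mathrm{D}_1)$. Multiplying these produces exactly the target main term $\tfrac{1}{\Pi(d)}\, 2^t\, \mu_{X,\infty}(X(\R))\, \nu(\mathrm{D}_1)\, B^{\langle \w_X^{-1}, u \rangle}$.

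For the error term, I use that Lemma \ref{majoration_crucial_dans_un_compact} confines $\mathcal{D}_1$ to a fixed bounded box, so $\Vol(\proj_I(\mathcal{D}_1))$ is bounded uniformly in $I$ by a constant depending only on $\mathrm{D}$. For any proper nonempty $I \subsetneq \Sigma(1)$, choosing $\rho_0 \in \Sigma(1) \setminus I$ and using $d_\rho \geqslant 1$ together with positivity of each $\langle [D_\rho], u \rangle$ (which holds since $u \in (\cone^\vee)^\circ$), I estimate
\begin{equation*}
\prod_{\rho \in I} a_\rho \leqslant \prod_{\rho \in I} B^{\langle [D_\rho], u \rangle} \leqslant B^{\langle \w_X^{-1}, u \rangle - \langle [D_{\rho_0}], u \rangle} \leqslant \frac{B^{\langle \w_X^{-1}, u \rangle}}{B^{\min_{\rho \in \Sigma(1)} \langle [D_\rho], u \rangle}}.
\end{equation*}
Since $\langle \w_X^{-1}, u \rangle - \min_\rho \langle [D_\rho], u \rangle > 0$ and $B > 1$, the right-hand side is at least $1$ and therefore absorbs the additive term $C_1$, giving the claimed uniform estimate.

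The approach has no serious obstacle; the only point needing care is to confirm that the constants in Proposition \ref{bhargava_geometrie_nombre} are genuinely independent of $d$. This is ensured by placing all $d$-dependence inside the linear map $\Phi$, whose action does not alter the number or degree of polynomial inequalities describing $\mathcal{D}_1$, and by observing that the main term and all projection volumes scale explicitly in the $a_\rho$.
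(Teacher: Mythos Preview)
Your proof is correct and follows essentially the same route as the paper: write $N_d(B)=\sharp(\Z^{\Sigma(1)}\cap\Phi(\mathcal{D}_1))$ with $\Phi$ diagonal, apply Proposition~\ref{bhargava_geometrie_nombre} (whose constants are uniform in $d$ since $\Phi$ does not alter the semi-algebraic complexity), and then bound each $\Vol(\proj_I(\Phi(\mathcal{D}_1)))$ by $C\cdot B^{\langle\sum_{\rho\in I}[D_\rho],u\rangle}/\prod_{\rho\in I}d_\rho\leqslant C\cdot B^{\langle\omega_X^{-1},u\rangle}/B^{\min_\rho\langle[D_\rho],u\rangle}$ using $d_\rho\geqslant1$ and $u\in(\cone^\vee)^\circ$. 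Your extra sentence absorbing the additive constant $C_1$ via $\sharp\Sigma(1)\geqslant2$ is a nice touch that the paper leaves implicit.
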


\begin{proof}
    Denote by $\Psi_I : \R^I \rightarrow \R^I$ the morphism which maps $$(x_{\rho})_{\rho \in I} \mapsto \left( \frac{B^{\langle [D_{\rho}] , u \rangle}}{d_{\rho}} . x_{\rho} \right)_{\rho \in I} .$$Then we may remark that:
  $$ \proj_I \circ \Psi = \Psi_I \circ \proj_I .$$Using this we get that there exists $C_{\mathrm{D}} > 0$ (which depends only on the compact subset $\mathrm{D}$) such that for any non empty set $I \subsetneq \Sigma(1) $:
   $$ \Vol(\proj_I(\Psi(\mathcal{D})) \leqslant C_{\mathrm{D}} . \frac{B^{\langle \sum\limits_{\rho \in I} [D_{\rho}] , u \rangle}}{\prod\limits_{ \rho \in I} d_{\rho}} \leqslant C_{\mathrm{D}}.\frac{B^{\langle \w_X^{-1},u\rangle}}{B^{ \min\limits_{\rho \in \Sigma(1)} \langle [D_{\rho}],u\rangle } }.$$Applying proposition \ref{bhargava_geometrie_nombre} finishes the proof of the lemma.
   
\end{proof}

%In order to apply Möbius inversion, we shall work with $Z_d(B) = \sharp( \Lambda_d \cap \stackT(\R) )_{h_{\stackT,\infty} \in \mathrm{D}_{B}}$. We need the following upper bound:

%\begin{prop}
    %$$ \sharp\left(\Lambda_d \cap (\stackT(\R) \setminus T_{\Sigma(1)}(\R)) \right)_{h_{\stackT,\infty} \in \mathrm{D}_B} = O\left( \frac{B^{\langle \w_{X}^{-1} , u \rangle}}{B^{\min\limits_{\rho \in \Sigma(1)} \langle [D_{\rho}] , u \rangle }} \right)$$
%\end{prop}

%\begin{proof}
    %This is a consequence of \ref{number_of_points_complementary_of_the_torus_universal_torsor} because $\Lambda_d \subset \Z^{\Sigma(1)}$.
%\end{proof}

%Then we have the following corollary:

%\begin{cor}\label{estimation_Z_d(B)}
    %There exists a constants $C > 0$ (depending on the choice of $\mathrm{D})$ such that:
    %$$\left|  Z_d(B) - \frac{1}{\Pi(d)}.2^t.\mu_{X,\infty}(X(\R)). \nu(\mathrm{D}_1) B^{ \langle \w_{X}^{-1} , u \rangle } \right| \leqslant C .\frac{B^{\langle \w_X^{-1},u\rangle}}{B^{ \min\limits_{\rho \in \Sigma(1)} \langle [D_{\rho}],u\rangle } } .$$
%\end{cor}

\subsection{The Möbius inversion}

We recall that we take $\mathrm{D}$ a finite union of compact polyhedrons of $\Pic(X)^{\vee}_{\R}$, $u \in (\cone^{\vee})^{\circ}$ and $B > 1$.

\begin{prop}
    $$ \sharp( \stackT(\Z) \cap T_{\Sigma(1)}(\R) )_{h_{\stackT,\infty} \in \mathrm{D}_B} = \sum\limits_{d \in (\N^*)^{\Sigma(1)}} \mu(d) . N_d(B)  $$
\end{prop}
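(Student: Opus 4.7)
The approach is a direct Möbius inversion using Definition \ref{mobius_function_definition}. I will rewrite the left-hand side as a sum over $y \in \Z^{\Sigma(1)}$ of indicator functions, apply the identity $\mathbf{1}_{\stackT(\Z)} = \sum_{d} \mu(d) \mathbf{1}_{d.\Z^{\Sigma(1)}}$, and then swap the two sums so as to recognise the inner sum as $N_d(B)$.

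First I observe that the Möbius identity is stated for functions on $(\N^*)^{\Sigma(1)}$, whereas here $y$ ranges over the lattice $\Z^{\Sigma(1)}$ restricted to the open torus $T_{\Sigma(1)}(\R)$, i.e.\ integer vectors with every coordinate non-zero. For such $y$ the condition $y \in \stackT(\Z)$, which by Theorem \ref{description_universal_torsor_toric_varieties} amounts to $\gcd(y^{\tilde{\sigma}}, \sigma \in \Sigma_{\max}) = 1$, depends only on the tuple of absolute values $|y| = (|y_{\rho}|)_{\rho \in \Sigma(1)} \in (\N^*)^{\Sigma(1)}$; similarly the condition $d \mid y$ reduces to $d \mid |y|$. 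Applying Definition \ref{mobius_function_definition} to $e = |y|$ therefore gives the pointwise identity
$$\mathbf{1}_{\stackT(\Z)}(y) = \sum_{d \in (\N^*)^{\Sigma(1)}} \mu(d) \, \mathbf{1}_{d.\Z^{\Sigma(1)}}(y)$$
valid for every $y \in \Z^{\Sigma(1)} \cap T_{\Sigma(1)}(\R)$, with only finitely many non-zero terms on the right (namely the $d$ such that $d_\rho$ divides $|y_\rho|$ for every $\rho$).

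Next I expand the cardinality on the left-hand side as $\sum_{y} \mathbf{1}_{\stackT(\Z)}(y)$, where $y$ runs over the integer points of $T_{\Sigma(1)}(\R)$ with $h_{\stackT,\infty}(y) \in \mathrm{D}_B$, substitute the above identity, and exchange the order of summation. The swap is legitimate because the $y$-sum is itself finite: by Lemma \ref{majoration_crucial_dans_un_compact} and Theorem \ref{crucial_majoration}, $\mathcal{D}_B$ is a bounded subset of $\R^{\Sigma(1)}$ and so contains only finitely many lattice points. For each fixed $d$, the inner sum over $y \in \Lambda_d \cap T_{\Sigma(1)}(\R)$ with $h_{\stackT,\infty}(y) \in \mathrm{D}_B$ is by definition $N_d(B)$, which yields the desired identity.

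I do not anticipate any significant obstacle; the argument is essentially bookkeeping of indicator functions together with a justified Fubini-type exchange. The only subtle point is that the outer sum $\sum_{d} \mu(d) N_d(B)$ is indexed by an infinite set, but Remark \ref{remark_nulitte_N_d(B)} provides $N_d(B) = 0$ as soon as some $d_{\rho}$ exceeds a fixed multiple of $B^{\langle [D_{\rho}], u \rangle}$, so only finitely many terms contribute and the identity is well-posed.
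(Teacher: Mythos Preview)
Your proof is correct and follows essentially the same approach as the paper: expand the left-hand side as a sum of indicator functions, insert the M\"obius identity from Definition \ref{mobius_function_definition}, swap the two finite sums, and recognise the inner sum as $N_d(B)$, with the finiteness of the $d$-sum supplied by Remark \ref{remark_nulitte_N_d(B)}. The only cosmetic difference is that the paper first factors out the $2^{\sharp\Sigma(1)}$ sign symmetry and works over $(\N^*)^{\Sigma(1)}$, whereas you instead extend the M\"obius identity to $\Z^{\Sigma(1)}\cap T_{\Sigma(1)}(\R)$ by observing that both the gcd condition and divisibility depend only on $|y|$; these are equivalent bookkeepings of the same argument.
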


\begin{proof}
    Firstly, by remark \ref{remark_nulitte_N_d(B)}, the sum is finite. Now using the definition of \ref{mobius_function_definition}, we have:
    \begin{align*}
        \sharp( \stackT(\Z) \cap T_{\Sigma(1)}(\R) )_{h_{\stackT,\infty} \in \mathrm{D}_B} &= 2^{\sharp \Sigma(1)}. \sum\limits_{y \in (\N^*)^{\Sigma(1)}} \mathbf{1}_{h_{\stackT,\infty}^{-1}(\mathrm{D}_B)}(y) . \mathbf{1}_{\stackT(\Z)}(y) \\
        &= 2^{\sharp \Sigma(1)}. \sum\limits_{y \in (\N^*)^{\Sigma(1)}} \mathbf{1}_{h_{\stackT,\infty}^{-1}(\mathrm{D}_B)}(y) . \sum\limits_{d \in (\N^*)^{\Sigma(1)}} \mu(d) . \mathbf{1}_{d.\Z^{\Sigma(1)}}(y) \\
        &= \sum\limits_{d \in (\N^*)^{\Sigma(1)}} \mu(d) . \left( 2^{\sharp \Sigma(1)}. \sum\limits_{y \in (\N^*)^{\Sigma(1)}} \mathbf{1}_{h_{\stackT,\infty}^{-1}(\mathrm{D}_B)}(y) . \mathbf{1}_{d.\Z^{\Sigma(1)}}(y) \right) \\
        &= \sum\limits_{d \in (\N^*)^{\Sigma(1)}} \mu(d) . N_d(B) .
    \end{align*}
\end{proof}

We shall show the following theorem:

\begin{theorem}
Let $\varepsilon \in ]0,1-\frac{1}{l}[$. There exists a constant $C > 0$ such that:
\begin{align*}
    &\left| \sharp( \stackT(\Z) \cap T_{\Sigma(1)}(\R) )_{h_{\stackT,\infty} \in \mathrm{D}_B} - 2^t . \prod\limits_p \w_{\stackT,p}(\stackT(\Z_p)) . \mu_{X,\infty}(X(\R)). \nu(\mathrm{D}_1) B^{ \langle \w_{X}^{-1} , u \rangle } \right| \\
    &\leqslant C . \frac{B^{\langle \w_{X}^{-1} , u \rangle}}{B^{(1 - \frac{1}{l} - \varepsilon) \min\limits_{\rho \in \Sigma(1)} \langle [D_{\rho}] , u \rangle }}.
\end{align*}
\end{theorem}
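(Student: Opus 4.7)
The plan is to combine the Möbius inversion $\sum_d \mu(d) N_d(B)$ just established with the two bounds on $N_d(B)$ from proposition \ref{upper_bound_N_d(B)} and theorem \ref{estimation_N_d(B)}, split according to a threshold $Y > 0$ to be chosen so as to balance the errors in the two regimes.

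I would first identify the expected main constant with an absolutely convergent series: combining proposition \ref{value_local_mesure_mobius_function} with the Eulerian identity of proposition \ref{value_global_mesure_mobius_fonction} at $s = 1$ (valid since $f \geqslant 2$) gives
$$ \prod_p \w_{\stackT,p}(\stackT(\Z_p)) = \sum_{d \in (\N^*)^{\Sigma(1)}} \frac{\mu(d)}{\Pi(d)} .$$
Setting $M_B = 2^t \cdot \mu_{X,\infty}(X(\R)) \cdot \nu(\mathrm{D}_1) \cdot B^{\langle \w_X^{-1}, u \rangle}$, the error to estimate decomposes, for any $Y > 0$, as
$$ \sum_d \mu(d) N_d(B) - M_B \sum_d \frac{\mu(d)}{\Pi(d)} = S_1(Y) + S_2(Y) + S_3(Y) $$
with $S_1(Y) = \sum_{\Pi(d) \leqslant Y} \mu(d)\bigl( N_d(B) - M_B/\Pi(d) \bigr)$, $S_2(Y) = \sum_{\Pi(d) > Y} \mu(d) N_d(B)$, and $S_3(Y) = -M_B \sum_{\Pi(d) > Y} \mu(d)/\Pi(d)$.

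Theorem \ref{estimation_N_d(B)} combined with lemma \ref{majoration_fonction_de_mobius}(1) yields
$$ |S_1(Y)| = O\!\left( Y^{1/f + \varepsilon} \cdot \frac{B^{\langle \w_X^{-1}, u\rangle}}{B^{\min_\rho \langle [D_\rho], u\rangle}} \right),$$
while proposition \ref{upper_bound_N_d(B)} with lemma \ref{majoration_fonction_de_mobius}(2) gives $|S_2(Y)| = O\bigl( B^{\langle \w_X^{-1}, u\rangle} / Y^{1 - 1/f - \varepsilon} \bigr)$, and lemma \ref{majoration_fonction_de_mobius}(2) applied directly to $S_3(Y)$ produces the same order. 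Equating the two resulting exponents of $B$ dictates the choice $Y = B^{\min_\rho \langle [D_\rho], u\rangle}$, at which every piece becomes $O\bigl( B^{\langle \w_X^{-1}, u\rangle} / B^{(1 - 1/f - \varepsilon) \min_\rho \langle [D_\rho], u\rangle} \bigr)$, which is precisely the claimed bound. The only delicate point is that the implicit constant in theorem \ref{estimation_N_d(B)} must be uniform in $d$; this is guaranteed because its proof invokes proposition \ref{bhargava_geometrie_nombre}, whose constant depends only on the parameters $m$, $k$, $l$, fixed by $\mathrm{D}$ and by the geometry of $X$, so the remainder of the argument is a routine assembly of the three error terms.
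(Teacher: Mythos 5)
Your proposal is correct and follows essentially the same route as the paper: Möbius inversion, splitting the sum at a threshold, using Theorem \ref{estimation_N_d(B)} with Lemma \ref{majoration_fonction_de_mobius}(1) for small $\Pi(d)$ and Proposition \ref{upper_bound_N_d(B)} with Lemma \ref{majoration_fonction_de_mobius}(2) for large $\Pi(d)$, and choosing the threshold $Y = B^{\min_\rho \langle [D_\rho],u\rangle}$. The only cosmetic difference is that you split the tail into $S_2$ and $S_3$ while the paper bounds $|N_d(B) - M_B/\Pi(d)|$ directly for $\Pi(d)$ large; your remark on uniformity of the constant in Theorem \ref{estimation_N_d(B)} matches the paper's discussion surrounding Proposition \ref{bhargava_geometrie_nombre}.
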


\begin{proof}
    Recall that by propositions \ref{value_local_mesure_mobius_function} and \ref{value_global_mesure_mobius_fonction}, we have that:
    $$\sum\limits_d \frac{\mu(d)}{\Pi(d)} = \prod\limits_p \w_{\stackT,p}(\stackT(\Z_p)) .$$ Hence using the previous proposition, we get:
    \begin{align*}
        &\left| \sharp( \stackT(\Z) \cap T_{\Sigma(1)}(\R) )_{h_{\stackT,\infty} \in \mathrm{D}_B} - 2^t . \prod\limits_p \w_{\stackT,p}(\stackT(\Z_p)) . \mu_{X,\infty}(X(\R)). \nu(\mathrm{D}_1) B^{ \langle \w_{X}^{-1} , u \rangle } \right| \\
        =& \left| \sum\limits_d \mu(d) \left( N_d(B) - 2^t . \frac{1}{\Pi(d)} . \mu_{X,\infty}(X(\R)). \nu(\mathrm{D}_1) B^{ \langle \w_{X}^{-1} , u \rangle } \right) \right| \\
        \leqslant&  \sum\limits_{\Pi(d) \leqslant B^{\min\limits_{\rho \in \Sigma(1)} \langle [D_{\rho}] , u \rangle }} | \mu(d) | . \left| N_d(B) - 2^t . \frac{1}{\Pi(d)} . \mu_{X,\infty}(X(\R)). \nu(\mathrm{D}_1) B^{ \langle \w_{X}^{-1} , u \rangle } \right| \\
        +& \sum\limits_{\Pi(d) \geqslant B^{\min\limits_{\rho \in \Sigma(1)} \langle [D_{\rho}] , u \rangle }} | \mu(d) | . \left| N_d(B) - 2^t . \frac{1}{\Pi(d)} . \mu_{X,\infty}(X(\R)). \nu(\mathrm{D}_1) B^{ \langle \w_{X}^{-1} , u \rangle } \right|.
    \end{align*}
    When $\Pi(d) \leqslant B^{\min\limits_{\rho \in \Sigma(1)} \langle [D_{\rho}] , u \rangle }$ we use corollary \ref{estimation_N_d(B)}, we have that there exists a constant $C > 0$ such that:
    $$\left|  N_d(B) - \frac{1}{\Pi(d)}.2^t.\mu_{X,\infty}(X(\R)). \nu(\mathrm{D}_1) B^{ \langle \w_{X}^{-1} , u \rangle } \right| \leqslant C .\frac{B^{\langle \w_X^{-1},u\rangle}}{B^{ \min\limits_{\rho \in \Sigma(1)} \langle [D_{\rho}],u\rangle } } .$$Let $\varepsilon \in ]0,1-\frac{1}{l}[$. Then by lemma \ref{majoration_fonction_de_mobius}, we get that:
    \begin{align*}
        &\sum\limits_{\Pi(d) \leqslant B^{\min\limits_{\rho \in \Sigma(1)} \langle [D_{\rho}] , u \rangle }} | \mu(d) | . \left| N_d(B) - 2^t . \frac{1}{\Pi(d)} . \mu_{X,\infty}(X(\R)). \nu(\mathrm{D}_1) B^{ \langle \w_{X}^{-1} , u \rangle } \right| \\
         &\leqslant C.\frac{B^{\langle \w_X^{-1},u\rangle}}{B^{ \min\limits_{\rho \in \Sigma(1)} \langle [D_{\rho}],u\rangle } } . B^{(\frac{1}{l} + \varepsilon).\min\limits_{\rho \in \Sigma(1)} \langle [D_{\rho}],u\rangle }.
    \end{align*}
    Now by proposition \ref{upper_bound_N_d(B)}, there exists $C' > 0$ such that:
    $$N_d(B) \leqslant C'.\frac{B^{\langle \w_{X}^{-1} , u \rangle}}{\Pi(d)} .$$ Hence we get using the second assertion of lemma \ref{majoration_fonction_de_mobius} that there exists $C'' > 0$ such that:
    \begin{align*}
        &\sum\limits_{\Pi(d) \geqslant B^{\min\limits_{\rho \in \Sigma(1)} \langle [D_{\rho}] , u \rangle }} | \mu(d) | . \left| N_d(B) - 2^t . \frac{1}{\Pi(d)} . \mu_{X,\infty}(X(\R)). \nu(\mathrm{D}_1) B^{ \langle \w_{X}^{-1} , u \rangle } \right| \\
        &\leqslant C''. B^{\langle \w_X^{-1},u\rangle} .\sum\limits_{\Pi(d) \geqslant B^{\min\limits_{\rho \in \Sigma(1)} \langle [D_{\rho}] , u \rangle }} \frac{| \mu(d) |}{\Pi(d)} \\
        &\leqslant C''.\frac{B^{\langle \w_X^{-1},u\rangle}}{B^{(1-\frac{1}{l} - \varepsilon) \min\limits_{\rho \in \Sigma(1)} \langle [D_{\rho}],u\rangle } } .
    \end{align*}This concludes our proof.    
\end{proof}

Applying theorem \ref{number_of_points_complementary_of_the_torus_universal_torsor}, we get:

\begin{cor}\label{conclusion_torsor_univ}
Let $\varepsilon \in ]0,1-\frac{1}{l}[$. There exists a constant $C > 0$ such that:
\vspace{0,1 cm}
    $$\left| (\stackT(\Z) )_{h_{\stackT,\infty} \in \mathrm{D}_B}  - 2^t . \prod\limits_p \w_{\stackT,p}(\stackT(\Z_p)) . \mu_{X,\infty}(X(\R)). \nu(\mathrm{D}_1) B^{ \langle \w_{X}^{-1} , u \rangle } \right| \leqslant C . \frac{B^{\langle \w_{X}^{-1} , u \rangle}}{B^{(1 - \frac{1}{l} - \varepsilon) \min\limits_{\rho \in \Sigma(1)} \langle [D_{\rho}] , u \rangle }}$$
\end{cor}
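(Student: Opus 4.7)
The plan is to decompose the count $\sharp(\stackT(\Z))_{h_{\stackT,\infty} \in \mathrm{D}_B}$ into the contribution coming from the open torus $T_{\Sigma(1)}(\R)$ and the contribution coming from its complement inside $\stackT(\R)$. The torus contribution has just been estimated by the preceding theorem with an error of size $C \cdot B^{\langle \w_X^{-1},u\rangle} / B^{(1-\frac{1}{f}-\varepsilon)\min_\rho \langle [D_\rho],u\rangle}$, so it suffices to bound the off-torus contribution by something of the same order (or better).

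First, I would write the trivial decomposition
\begin{equation*}
\sharp (\stackT(\Z))_{h_{\stackT,\infty} \in \mathrm{D}_B} = \sharp(\stackT(\Z) \cap T_{\Sigma(1)}(\R))_{h_{\stackT,\infty} \in \mathrm{D}_B} + \sharp(\stackT(\Z) \cap (\stackT(\R) \setminus T_{\Sigma(1)}(\R)))_{h_{\stackT,\infty} \in \mathrm{D}_B}.
\end{equation*}
The first summand is controlled by the theorem immediately preceding the corollary, up to an error of the required size. For the second summand, I would invoke theorem \ref{number_of_points_complementary_of_the_torus_universal_torsor}, which gives
\begin{equation*}
\sharp(\stackT(\Z) \cap (\stackT(\R) \setminus T_{\Sigma(1)}(\R)))_{h_{\stackT,\infty} \in \mathrm{D}_B} = O\!\left( \frac{B^{\langle \w_X^{-1},u\rangle}}{B^{\min_{\rho} \langle [D_\rho],u\rangle}} \right).
\end{equation*}

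The key observation to finish is that since $\varepsilon \in {]0,1-\frac{1}{f}[}$ we have $1 - \frac{1}{f} - \varepsilon < 1$, and since $u \in (\cone^{\vee})^{\circ}$ one has $\min_{\rho \in \Sigma(1)} \langle [D_\rho],u\rangle > 0$; consequently the exponent $(1 - \frac{1}{f} - \varepsilon)\min_\rho \langle [D_\rho],u\rangle$ is strictly less than $\min_\rho \langle [D_\rho],u\rangle$, so the bound from theorem \ref{number_of_points_complementary_of_the_torus_universal_torsor} is itself an $O$ of the target error term. Adding the two estimates and applying the triangle inequality yields the corollary, with a constant that is the sum of the two previous constants.

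I do not expect any real obstacle here: this is a straightforward bookkeeping combination of two results already proved. The only thing to be careful about is to verify that the comparison of exponents above is indeed valid under the hypotheses on $\varepsilon$ and $u$, and to remember that the multiplicative constants depend on $\mathrm{D}_1$ and on $\varepsilon$ but not on $B$.
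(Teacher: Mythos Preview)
Your proposal is correct and matches the paper's approach exactly: the paper simply says ``Applying theorem \ref{number_of_points_complementary_of_the_torus_universal_torsor}, we get'' the corollary, i.e.\ it combines the preceding theorem on $\sharp(\stackT(\Z)\cap T_{\Sigma(1)}(\R))_{h_{\stackT,\infty}\in \mathrm{D}_B}$ with the off-torus bound, using the same exponent comparison you note.
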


\subsection{Conclusion}

Finally we can establish the theorem \ref{theorem_principal_multi_hauteur}:

\begin{theorem}\label{theorem_final_variete_torique}
Let $\epsilon \in ]0,1 - \frac{1}{l}[$. Then we have:
\vspace{0,1 cm}
$$ \sharp ( X(\Q) )_{h \in \mathrm{D}_B} \underset{B \rightarrow +\infty}{=} \nu(\mathrm{D}_1 ) . \tau(X) .  B^{\langle \w_X^{-1},u \rangle} \left( 1 + O\left(B^{- (1 - \frac{1}{l} - \varepsilon).\min\limits_{\rho \in \Sigma(1)} \langle [D_{\rho}] , u \rangle} \right) \right)$$where $\nu$ is the measure over $\Pic(X)^{\vee}_{\R}$ defined as in \ref{mesure_picard_group}.
\end{theorem}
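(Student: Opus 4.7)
The theorem follows by assembling the estimate already obtained on the universal torsor side in Corollary \ref{conclusion_torsor_univ} with the descent identity from the beginning of Section 3, together with the Tamagawa number formula of Remark \ref{tamagawa_number_toric_variety}. All of the genuine analytic content -- the geometry of numbers input via Proposition \ref{bhargava_geometrie_nombre}, the M\"obius inversion, and the boundary estimate of Theorem \ref{number_of_points_complementary_of_the_torus_universal_torsor} -- has already been packaged, so the remaining task is essentially bookkeeping.

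The first step is to invoke the descent identity
\[
\sharp X(\Q)_{h \in \mathrm{D}_B} \;=\; \frac{1}{2^t}\, \sharp \bigl( \stackT(\Z) \bigr)_{h_{\stackT,\infty} \in \mathrm{D}_B},
\]
which holds because $q : \stackT(\Z) \to X(\Q)$ is surjective with fibers of cardinality $2^t = \sharp \ker(\log_{T_{\NS},\infty})$ and preserves the multi-height by Corollary \ref{simplification_of_the_height_to_univ_torsor}. Substituting the estimate of Corollary \ref{conclusion_torsor_univ} and dividing through by $2^t$ then gives
\[
\sharp X(\Q)_{h \in \mathrm{D}_B} \;=\; \prod_p \w_{\stackT,p}\bigl(\stackT(\Z_p)\bigr) \cdot \mu_{X,\infty}\bigl(X(\R)\bigr) \cdot \nu(\mathrm{D}_1)\, B^{\langle \w_X^{-1}, u\rangle} + O\!\left( \frac{B^{\langle \w_X^{-1}, u\rangle}}{B^{(1-\frac{1}{f}-\varepsilon)\min\limits_{\rho \in \Sigma(1)} \langle [D_\rho], u\rangle}} \right).
\]

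The remaining step is to match the leading constant with $\tau(X)\,\nu(\mathrm{D}_1)$. Theorem \ref{mesure_torseur_univ_tamagawa_number_toric} evaluates each non-archimedean factor as
\[
\w_{\stackT,p}\bigl(\stackT(\Z_p)\bigr) \;=\; \left(1 - \tfrac{1}{p}\right)^{t} \cdot \frac{\sharp X(\F_p)}{p^{\dim X}},
\]
which is exactly the local factor appearing in the explicit formula for $\tau(X)$ stated in Remark \ref{tamagawa_number_toric_variety} under our split hypothesis. Combining with the archimedean volume $\mu_{X,\infty}(X(\R))$ reconstructs $\tau(X)$ verbatim, and factoring $\nu(\mathrm{D}_1)\tau(X) B^{\langle \w_X^{-1},u\rangle}$ out of the whole expression converts the absolute error into the stated relative error $O\bigl(B^{-(1-\frac{1}{f}-\varepsilon)\min_{\rho} \langle [D_\rho], u\rangle}\bigr)$.

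The only subtle point worth flagging is that the infinite product $\prod_p \w_{\stackT,p}(\stackT(\Z_p))$ must converge absolutely, which follows from Proposition \ref{value_global_mesure_mobius_fonction} at $s=1$ (using $1 > 1/f$). There is no remaining obstacle at this final stage: the true analytic heart of the argument lies in the uniform-in-$d$ lattice point estimate of Theorem \ref{estimation_N_d(B)} and its summation against the M\"obius weights via Lemma \ref{majoration_fonction_de_mobius}, both of which are already available.
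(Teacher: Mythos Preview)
Your proof is correct and follows essentially the same route as the paper: invoke the descent identity $\sharp X(\Q)_{h \in \mathrm{D}_B} = \tfrac{1}{2^t}\sharp(\stackT(\Z))_{h_{\stackT,\infty} \in \mathrm{D}_B}$, apply Corollary \ref{conclusion_torsor_univ}, and identify the leading constant with $\tau(X)$ via Theorem \ref{mesure_torseur_univ_tamagawa_number_toric} and Remark \ref{tamagawa_number_toric_variety}. Your added remark on the absolute convergence of $\prod_p \w_{\stackT,p}(\stackT(\Z_p))$ is a welcome clarification but not strictly needed beyond what the paper already uses.
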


\begin{proof}
      Let us recall that $\sharp X(\Q)_{h \in \mathrm{D}_B} = \frac{1}{2^t} \sharp \left( \stackT(\Z) \right)_{h_{\stackT,\infty} \in \mathrm{D}_B}$. Applying corollary \ref{conclusion_torsor_univ}, we get that there exists a constant $C > 0$ such that:
       $$\left| (\sharp X(\Q)_{h \in \mathrm{D}_B}  - \left( \prod\limits_p \w_{\stackT,p}(\stackT(\Z_p)) \right) . \mu_{X,\infty}(X(\R)). \nu(\mathrm{D}_1) B^{ \langle \w_{X}^{-1} , u \rangle } \right| \leqslant C . \frac{B^{\langle \w_{X}^{-1} , u \rangle}}{B^{(1 - \frac{1}{l} - \varepsilon) \min\limits_{\rho \in \Sigma(1)} \langle [D_{\rho}] , u \rangle }} .$$Moreover by remark \ref{tamagawa_number_toric_variety} and theorem \ref{mesure_torseur_univ_tamagawa_number_toric}, we get: $$  \left( \prod\limits_p \w_{\stackT,p}(\stackT(\Z_p)) \right) . \mu_{X,\infty}(X(\R)) = \tau(X)$$hence we have the result stated.
\end{proof}

%\section{Multi-height study of Campana points of toric varieties}

%\input{campana_point/main}

%    Text of article.

%    Bibliographies can be prepared with BibTeX using amsplain,
%    amsalpha, or (for "historical" overviews) natbib style.
\bibliographystyle{amsplain}
\bibliography{bibliography}
%    Insert the bibliography data here.

\end{document}